\newlength\fullwidth
\numberwithin{equation}{section}
\DeclareMathSymbol{\leqslant}{\mathalpha}{AMSa}{"36} % nicer `smaller or equal'
\DeclareMathSymbol{\geqslant}{\mathalpha}{AMSa}{"3E} % nicer `larger or equal'
\DeclareMathSymbol{\eset}{\mathalpha}{AMSb}{"3F}     % nicer `emptyset'
\renewcommand{\leq}{\;\leqslant\;}                   % redef. of < or =
\renewcommand{\geq}{\;\geqslant\;}                   % redef. of > or =
\newcommand{\rb}{{\rm b}}
\newcommand{\rw}{{\rm w}}
\def\1{\ifmmode {1\hskip -3pt \rm{I}} \else {\hbox {$1\hskip -3pt \rm{I}$}}\fi}
\newcommand{\grad}{\nabla}
\newcommand{\z}{\zeta}
\renewcommand{\epsilon}{\varepsilon}
\newtheorem{theorem}{Theorem}[section]
\newtheorem{lemma}[theorem]{Lemma}
\newtheorem{proposition}[theorem]{Proposition}
\newtheorem{corollary}[theorem]{Corollary}
\newtheorem{remark}[theorem]{Remark}
\newtheorem{definition}[theorem]{Definition}
  \let\z=\zeta
\renewcommand{\le}{\leq}
\renewcommand{\ge}{\geq}
\date{}
\title[A $(2+1)$-dimensional growth process]{A $(2+1)$-dimensional growth process
  with explicit stationary measures}
\author{Fabio Lucio Toninelli}
\address{Universit\'e de Lyon, CNRS and Institut Camille Jordan, Universit\'e Lyon 1,
    43 bd
 du 11 novembre 1918, 69622 Villeurbanne, France}
\email{toninelli@math.univ-lyon1.fr}
\begin{document}

\begin{abstract}
We introduce a class of   $(2+1)$-dimensional stochastic growth processes,
that can be seen as irreversible random  dynamics of
discrete interfaces. ``Irreversible'' means that the interface has an average non-zero
drift.  Interface configurations correspond to
height functions of dimer coverings of the infinite hexagonal or square
lattice.  The model can also
be viewed as an interacting driven particle system and in the totally
asymmetric case  the dynamics corresponds to an infinite
collection of mutually interacting Hammersley processes.

When the dynamical asymmetry parameter $(p-q)$ equals zero, the
infinite-volume Gibbs measures $\pi_\rho$ (with given slope $\rho$) are stationary and
reversible. When $p\ne q$, $\pi_\rho$ are not reversible any more but,
remarkably, they are  still stationary. In such stationary states, we
find that the average
height function at any given point $x$ grows linearly with time $t$ with a
non-zero speed: $\mathbb E Q_x(t):=\mathbb E(h_x(t)-h_x(0))= V(\rho) t$ while the typical
fluctuations of $Q_x(t)$ are smaller than any power of $t$ as $t\to\infty$.

In the totally asymmetric case of  $p=0,q=1$ and on the hexagonal lattice, the dynamics coincides
with the ``anisotropic KPZ growth model'' introduced by A. Borodin and
P. L. Ferrari in \cite{BF1,BF2}. For
a suitably chosen, ``integrable'',  initial condition (that is very far from the
stationary state), they were able to determine the
hydrodynamic limit and a CLT for interface fluctuations on scale
$\sqrt{\log t}$, exploiting the fact
that in that case certain space-time height correlations can be
computed exactly. 
In the same setting they proved that, asymptotically for $t\to\infty$, the local
statistics of height fluctuations tends to that of a Gibbs state
(which led to the prediction that Gibbs states should be stationary).
\\
\\
2010 \textit{Mathematics Subject Classification: 	82C20, 60J10,
  60K35, 82C24}
  \\
  \textit{Keywords: Interface growth, Interacting particle system, Lozenge and domino tilings, Hammersley
    process, Anisotropic KPZ equation}
\end{abstract}
\thanks{This work was partially supported by the Marie Curie
  IEF Action ``DMCP- Dimers, Markov chains and Critical Phenomena'',
  grant agreement n.  621894}
\maketitle

\section{Introduction}

To motivate the object of our study, let us start with a
well-known $(1+1)$-dimensional growth process. At all times $t$, the
configuration is an integer-valued height function $x\in \mathbb
Z\mapsto h_x(t)\in\mathbb Z$ with space increments
$h_x-h_{x-1}=\pm1$, see Fig. \ref{fig:aseppo}. Local minima turn to local maxima
with rate $p$ (this corresponds to deposition of elementary squares)
and local maxima to local minima with rate $q$ (evaporation of
elementary squares). If positive interface
gradients are identified with ``particles'' and negative gradients with
``holes'', this process is
equivalent to the one-dimensional Asymmetric Simple Exclusion
process (ASEP). 
\begin{figure}[h]
  \includegraphics[width=7cm]{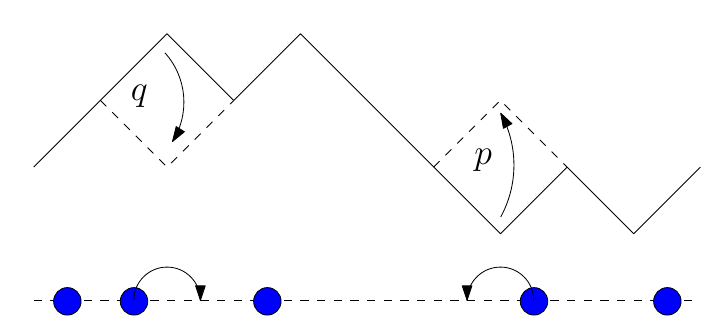}
\caption{The ASEP: squares are deposed (i.e. minima turn to maxima)
  with rate $p$ and evaporate (i.e. maxima turn to minima) with rate
  $q$. In the particle interpretation, particles jump to the right/left with
rate $q/p$ and cannot occupy the same site.}\label{fig:aseppo}
\end{figure}

The study of this and similar stochastic growth processes in dimension $(1+1)$ witnessed a
spectacular progress recently, especially in relation with the
so-called KPZ equation, cf. e.g. \cite{Quastel,FSp,CRM} for  recent reviews. Some
of the basic questions that were solved for certain models include the
identification of the translation-invariant stationary states (for ASEP, these are simply
the combinations of Bernoulli measures for any intensity
$\rho\in[0,1]$), the determination of the dynamic scaling exponents characterising the
 space-time correlation structure of height fluctuations, the study of
the limit rescaled fluctuation process and its dependence on the type
of initial condition. 
The same KPZ scaling relations appear also in the context of $(1+1)$-dimensional
directed polymers in random environment, last passage percolation and
random matrix theory, just to mention a few instances \cite{Quastel,FSp,CRM}.

 On the other hand, for $(d+1)$-dimensional
stochastic growth models, $d\ge 2$, the situation is much more rudimentary and
mathematical results (see notably \cite{PS,BF1}) are rare.
In this work we introduce a $(2+1)$-dimensional stochastic growth
process, for which we study the stationary measures and the
corresponding large-time behavior of height fluctuations.  The
two-dimensional interfaces entering the definition of our process are
discrete (i.e. heights are integer-valued) and are given by the height
function associated to dimer coverings (perfect matchings) of either
the infinite hexagonal or infinite square lattice \cite{Klecturenotes}.  Height functions
corresponding to dimer coverings of bipartite planar graphs, or to the
associated tilings of the plane, are classical examples of discrete
two-dimensional interfaces.  For instance, dimer coverings of the
hexagonal lattice (i.e. tilings of the plane by lozenges of three
different orientations) correspond to discrete monotone surfaces
obtained by stacking unit cubes, see Figure
\ref{fig:cubi}. ``Monotone'' means that if we let $h_{x,y}$ denote the
height w.r.t. the horizontal plane of the vertical column of cubes with horizontal coordinates
$(x,y)$, then $h_{x,y}\ge \max(h_{x+1,y},h_{x,y+1})$. In a sense,
discrete monotone height functions are the most natural $(2+1)$-dimensional 
analogue of the $(1+1)$-dimensional height functions appearing in the one-dimensional
ASEP.

Given a
density vector $\rho=(\rho_1,\rho_2,\rho_3)\in \mathbb R_+^3$ with
$\rho_1+\rho_2+\rho_3=1$, there exists \cite{KOS} a unique infinite-volume
translation-invariant ergodic Gibbs measure $\pi_\rho$ such that
\begin{itemize}
\item the
three types of lozenges have densities $\rho_i,i=1,2,3$ and
\item conditioned
on the tiling configuration outside a finite region $\Lambda$ of the plane,
$\pi_\rho$ describes a uniformly random tiling of $\Lambda$.
\end{itemize}
The measures $\pi_\rho$ have an explicit determinantal structure that
will play a role in this work and that is recalled in Section \ref{sec:altezza}.
 
\begin{figure}[h]
  \includegraphics[width=6cm]{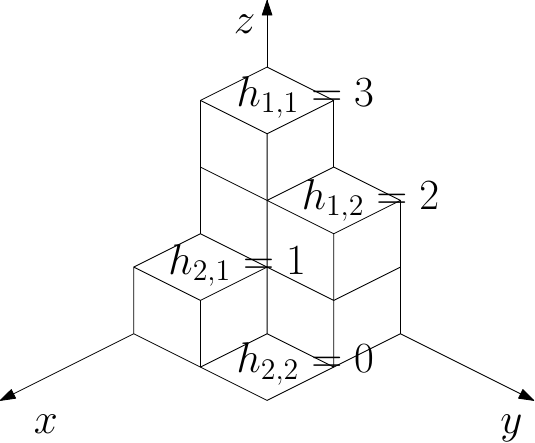}
\caption{A lozenge tiling of a portion of the plane, or equivalently a monotone
stacking of unit cubes}\label{fig:cubi}
\end{figure}

To model a growth process, we want to introduce a Markov evolution
which is \emph{asymmetric} or \emph{irreversible}, in the sense that
the interface has a net drift, proportional to an asymmetry parameter
$p-q$.  Moreover, as discussed in Section \ref{sec:kpz} below, in
order that its fluctuations can be at least heuristically described by
a $(2+1)$-dimensional KPZ-type equation, the average interface speed
should be a non-linear function of the interface slope.  The most
natural $(2+1)$-dimensional generalization of the ASEP described above
(but which is \emph{not} the one we will study here) would be the
following. Let
\begin{equation}
  \begin{aligned}
\Delta^+_{x,y}:&=\min(h_{x-1,y},h_{x,y-1})-h_{x,y}\ge 0\\
\Delta^-_{x,y}:&=h_{x,y}-\max(h_{x+1,y},h_{x,y+1})\ge0,    
  \end{aligned}
\end{equation}
and observe that $\Delta^+_{x,y}$ (resp. $\Delta^-_{x,y}$) is the maximal number of cubes we
can add to (resp. remove from) column $(x,y)$ while respecting the
condition $h_{x',y'}\ge \max(h_{x'+1,y'},h_{x',y'+1})$ for every $(x',y')$.
For every column $(x,y)$,
%\begin{itemize}
%\item 
we add a single cube with rate $p$ if $\Delta^+_{x,y}>0$ and remove a
single cube with rate $q$ if $\Delta^-_{x,y}>0$.  In words, single
elementary cubes are deposed (Fig. \ref{fig:flip} top) with rate $p$
and removed (Fig. \ref{fig:flip} bottom) with rate $q$ (compare with
Fig. \ref{fig:aseppo}).  We refer to this as the ``single-flip
dynamics''. If $p=q$ there is no drift and the infinite-volume Gibbs
measures $\pi_\rho$ \cite{KOS} are stationary and reversible.  If
instead $p\ne q$, the stationary states are not known, but they appear
to be definitely very different from the equilibrium Gibbs measures
\cite{FT1,FT2,NT}.  This process has been studied numerically and one
finds that typical interface fluctuations grow with time like
$t^\beta$, with $\beta\simeq 0.24...$ \cite{FT1,FT2}.  This is in
sharp contrast with the ASEP, where the Bernoulli measures are
stationary, irrespective of $p$ being equal or different from $q$.  In
the language of Section \ref{sec:kpz}, the two-dimensional single-flip
growth process is believed to belong to the so-called \emph{isotropic}
$(2+1)$-dimensional KPZ class when $q\ne p$.  Unfortunately, the single-flip process
is very hard to analyze mathematically and very little is known
rigorously.

In this work we study, instead of the single-flip dynamics, a
different $(2+1)$-dimensional irreversible growth process, that we
call ``bead dynamics'' for reasons that will be clear later (in the
hexagonal lattice case, ``beads'' or ``particles'' correspond to
horizontal lozenges as in Fig. \ref{fig:cubi}).  As discussed in
Section \ref{sec:kpz}, the bead dynamics belongs (in contrast with the
single-flip dynamics) to the so-called \emph{anisotropic}
$(2+1)$-dimensional KPZ class when $q\ne p$.  Updates of the dynamics
consist in adding or removing a random number $\ge 1$ of cubes at some
column $(x,y)$, in the following way (see Section \ref{sec:dyna} for a
precise definition and Section \ref{sec:estensionedomini} for the
analogous construction on the square lattice). For every column
$(x,y)$, we assign
\begin{itemize}
\item rate $p$ to the update $h_{x,y}\to h_{x,y}+i$
for every $i=1,\dots,\Delta^+_{x,y}$ (deposition of $i$ cubes to
  column $(x,y)$);
\item rate $q$ to the update $h_{x,y}\to h_{x,y}-i$ for every
  $i=1,\dots,\Delta^-_{x,y}$ (removal of  $i$ cubes
  from column $(x,y)$).
\end{itemize}
If $p=q$ again there is no drift and the measures $\pi_\rho$
\cite{KOS} are stationary and
reversible.  Somewhat surprisingly, $\pi_\rho$
turns out to be stationary (but not reversible!) for any density vector
$\rho$ and for
any value of $p-q$. This is the content of our first result, Theorem
\ref{th:losanghe}. The same then clearly holds also if we add to the generator of the
bead dynamics the generator of another process w.r.t. which $\pi_\rho$
is reversible. The measures $\pi_\rho$ and their convex
combinations are the only stationary measures that can be obtained as
$L\to\infty$ limits of
stationary measures for the bead dynamics periodized on the torus of side
$L$. In principle our result does not exclude the existence of other
stationary measures that cannot be obtained this way; there might exist
for instance analogs of the so-called ``blocking measures'' of
one-dimensional asymmetric exclusion processes \cite{FLS,BrMo}.

We emphasize that it is a non-trivial fact that equilibrium Gibbs
measures should remain stationary in presence of dynamical
irreversibility. As we mentioned above, this is false for instance for
the single-flip dynamics.
%  for every column $(x,y)$,
% %\begin{itemize}
% %\item 
% add a single cube with rate $p$ if
% $\Delta^+_{x,y}>0$ and 
% remove a single cube with rate $q$ if
% $\Delta^-_{x,y}>0$.
% %\end{itemize}
Typically, one expects that a Gibbs measure of a reversible dynamics
remains stationary after 
introduction of a drift only when the reversible dynamics satisfies a
so-called ``gradient condition'' \cite{Spohn,KLS,Bertini}. As we
discuss in Section \ref{sec:gradiente}, for the symmetric 
dynamics with $p=q$ one can indeed identify a certain
``gradient condition'' that might help explain why
Theorem \ref{th:losanghe} holds.

\smallskip

It is important to emphasize that stationarity of the Gibbs measures
means that, if the process is started from the distribution
$\pi_\rho$,  the law of interface gradients is
time-invariant. However,  
overall the height function has  a time-dependent
random shift $h_{x_0}(t)-h_{x_0}(0)$ where, say, $x_0$ is the origin of the
plane.  On average $h_{x_0}(t)-h_{x_0}(0)$ grows like $(p-q) t\,V$ for some
non-zero and slope-dependent $V$ but the amplitude of its fluctuations cannot be
deduced immediately from the stationary gradient measure $\pi_\rho$. %  It is then natural to ask what is
% the order of magnitude of the fluctuations (roughness)  of $h_x(t)-h_x(0)$. 
 Our second
result, Theorem \ref{th:vvar}, says that the typical fluctuations of $h_x(t)-h_x(0)$ grow
slower than any power of $t$. Under a certain (technical) restriction
on the interface slope, we can actually prove that fluctuations are at
most of order $\sqrt{\log t}$, which we believe
to be the optimal order of magnitude. Recall that, in sharp contrast, for the
single-flip dynamics  fluctuations were observed numerically
\cite{FT1,FT2}  to
grow like a non-trivial power of $t$.

A word about Theorem \ref{th:losanghe} (stationarity of $\pi_\rho$).
Checking stationarity is easy for the process periodized on the torus
of size $L$, see Section \ref{sec:sultoro}. The extension to the
infinite lattice is, however, non-trivial. One may expect that, when
$L$ is large, on local scales and for finite times the system does not
feel the periodic boundary conditions and therefore locally the
dynamics on the torus and on the infinite lattice could be coupled
with high probability. The situation is however more subtle: while on
the torus the process is always well-defined, in the infinite systems
one can easily construct initial configurations such that, for
instance, beads (horizontal lozenges) escape instantaneously to
infinity. This is due to the fact that we allow for an unbounded
amount of cubes to be deposed/removed at a time, since
$\Delta^\pm_{x,y}$ is not bounded. In order for the coupling to work,
one needs to prove that for typical initial conditions and with high
probability, the random variables $\Delta^\pm_{x,y}$ remain
sufficiently tight in time during the out-of-equilibrium evolution.
An important ingredient in overcoming these difficulties is the work
\cite{Seppa} by Sepp\"al\"ainen on the one-dimensional Hammersley
process \cite{AD,Seppa,FM}.  In fact, viewing beads as particles, the
bead dynamics can be seen as a two-dimensional generalization of the
Hammersley process, or more precisely an infinite collection of
interacting Hammersley processes, see Fig. \ref{fig:biglie2} (a
different two-dimensional generalization of the Hammersley process was
introduced by Sepp\"al\"ainen in \cite{Seppa2}: in that case a full
hydrodynamic limit was obtained, but the stationary measures and the size of height fluctuations remain
unknown). As a side remark, the single-flip dynamics can be instead
visualized in a natural way as an infinite collection of mutually
interacting one-dimensional ASEPs, see caption of
Fig. \ref{fig:biglie2}.

As
we explain in some more detail in Section \ref{sec:intfluctu}, in the
totally asymmetric case $p=0,q=1$ and on the
hexagonal lattice, the bead dynamics is the same as the interacting
driven particle system introduced by
A. Borodin and P. L. Ferrari in \cite{BF1,BF2}. In \cite{BF1}, for a
specific, deterministic initial condition, the hydrodynamic limit and
the convergence of height fluctuations on scale $\sqrt{\log  t}$ to a Gaussian field
were obtained. For such initial condition, the above-mentioned problem
of proving that the dynamics is well-posed does not arise, simply
because each bead has a deterministic, time-independent maximal
position it can possibly reach, and therefore cannot escape to
infinity. 
As we mention in Section \ref{sec:intfluctu}, on the basis of
\cite[Prop. 3.2]{BF1} it was natural to conjecture our Theorem \ref{th:losanghe}.

\subsection{Isotropic and anisotropic KPZ classes}
\label{sec:kpz}
In order to predict whether the fluctuations of a $(2+1)$-dimensional
growth process should be described by a KPZ-type equation, one should
look at the Hessian of  $V$, the average interface velocity considered as a
function of the interface slope. Indeed, the evolution of the
fluctuations $h$ in the stationary state of slope $\grad \phi$ should
be governed on large space-time scales by a stochastic PDE of the type
\begin{eqnarray}
  \partial_t h=\nu \Delta h+Q(\partial_x h,\partial_y h)+\text{white noise},
\end{eqnarray}
with $\nu$ a diffusion coefficient and $Q(\cdot,\cdot)$ a quadratic form whose
corresponding symmetric $2\times2$ matrix is proportional to the Hessian of $V$
at $\nabla \phi$.  (At present, it is not known how to regularize such
equation in order to make it mathematically well-defined, as was done
recently for its one-dimensional analog \cite{Hairer}).

The
growth model is said to belong to the ``anisotropic
$(2+1)$-dimensional KPZ class'' 
when the two eigenvalues of the quadratic form $Q$ have  opposite sign,  and to the ``isotropic
$(2+1)$-dimensional KPZ class'' when they have the same sign.  As
discussed in \cite{BF2}, the bead dynamics belongs to the anisotropic
class (the eigenvalues can be computed explicitly from formula
\eqref{eq:V} below for $V$).

Models in the anisotropic class are in a sense easier than those in
the isotropic class.  Indeed, in the former case it was predicted by
Wolf \cite{Wolf} that the non-linearity $Q$ is irrelevant as far as
the large-time behavior of the interface roughness is concerned, i.e. the
fluctuations of $h_x(t)-h_x(0)$ should be of the same order
$\sqrt{\log t}$ as for the linear Edwards-Wilkinson equation
\cite{EW}, where $Q$ is set to zero.  Theorem \ref{th:vvar} and
Eq. \eqref{eq:varianzaQ1} confirm this prediction, for the bead
model. Apart from the bead dynamics we study here, there are a few other
$(2+1)$-dimensional stochastic growth model models known to be in the
anisotropic KPZ class, and all of them are exactly solvable in some
sense. In this respect, let us mention the model introduced by
Pr\"ahofer and Spohn in \cite{PS}, for which height fluctuations are also
known to grow like $\sqrt{\log t}$.  See also \cite[Sec. 3.3]{BBO} for
growth models in the same universality class: it would be
interesting to see whether our result extend to these processes.

The situation is very different for models in the isotropic KPZ class. In
this case there are, to our knowledge, no exactly solvable models and
only numerical simulations are available (see \cite{HHK} for an overview). The non-linearity $Q$ is
expected to be relevant and to produce a non-trivial dynamical height
fluctuation exponent. In particular, while neither the interface velocity $V$ nor the stationary states of the
$(2+1)$-dimensional single-flip dynamics can be computed explicitly, the model is widely believed to  belong to 
the isotropic KPZ class and, as mentioned above,
the dynamical fluctuation exponent is numerically estimated to
$\beta\simeq 0.24..$ \cite{FT1,FT2}.

\section{Irreversible lozenge dynamics and stationarity of Gibbs states}
\label{sec:goal}

\subsection{Configuration space}
\label{sec:definizioni}
The Markov process we are interested in lives on $\Omega_{\mathcal H}$, the set of dimer coverings (perfect
matchings) of the hexagonal lattice $\mathcal H$, or equivalently the set of lozenge tilings of the whole
plane. See Figure \ref{fig:losanghe}. 
\begin{figure}[h]
  \includegraphics[width=10cm]{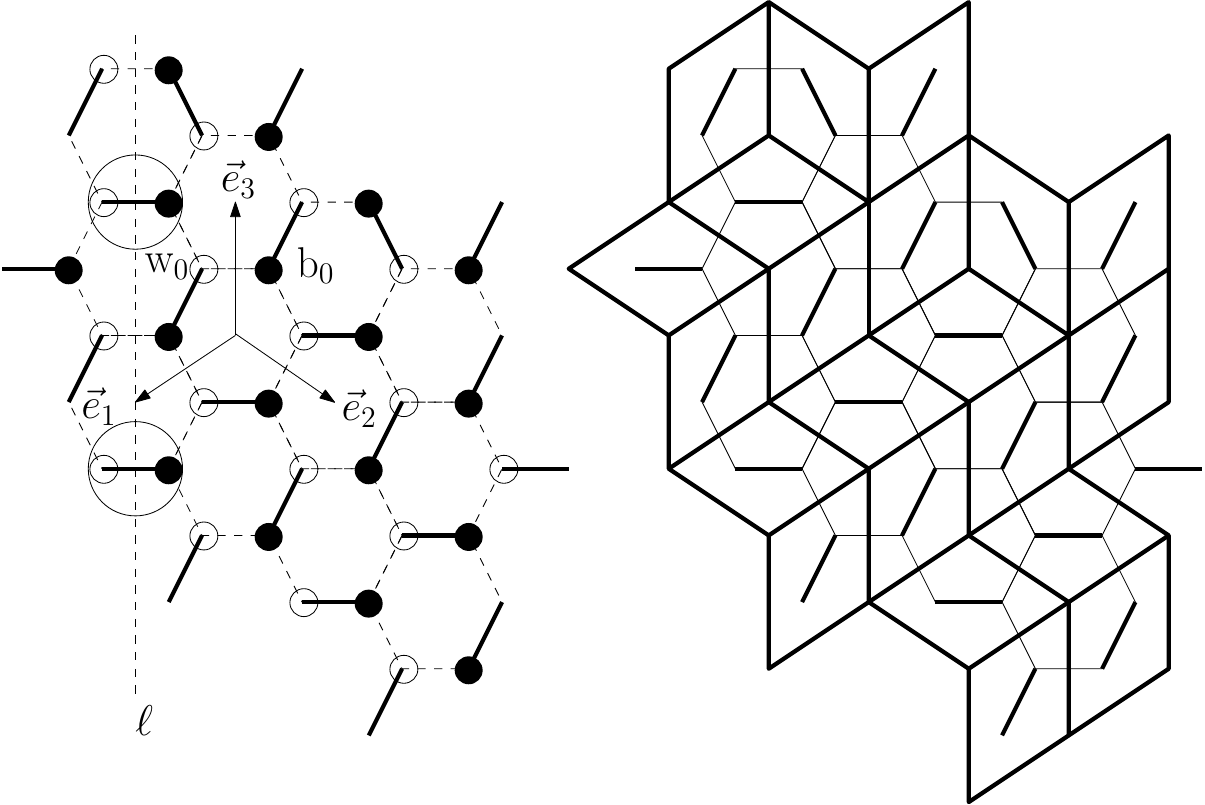}
\caption{A portion of dimer covering of $\mathcal H$  and the corresponding lozenge
  picture. The two beads in column $\ell$ are encircled and the
  vertices $\rb_{0},\rw_{0}$ are marked.}
\label{fig:losanghe}
\end{figure}
The ``elementary moves'' of the dynamics consist
in rotating by an angle $\pi/3$ three dimers around a hexagonal face, see
Figure \ref{fig:flip}. 
\begin{figure}[h]
  \includegraphics[width=4cm]{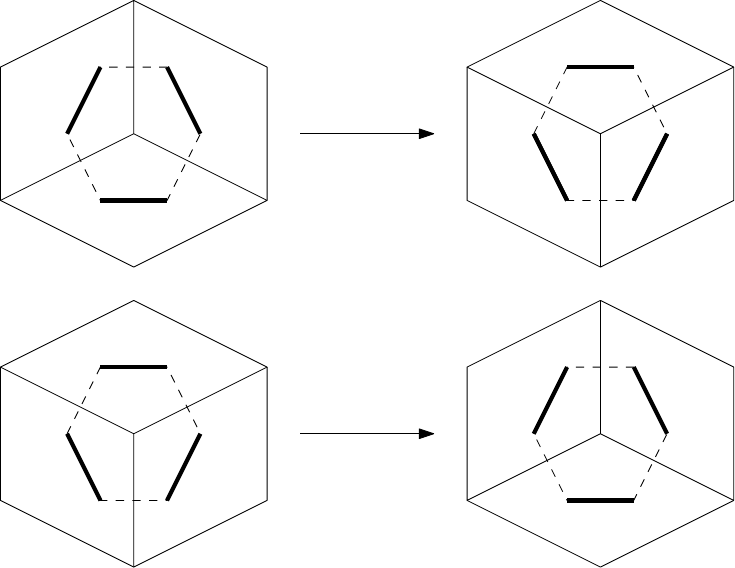}
\caption{The two  elementary moves.}
\label{fig:flip}
\end{figure}
In this move, a horizontal dimer moves up or
down a distance $1$. The generic move of the dynamics
(defined precisely in Section \ref{sec:dyna}), that was described in
the introduction as the deposition/removal of $k$ cubes, can be seen as a
concatenation of  a random number $k\ge 1$ of elementary moves in $k$ adjacent hexagons
in the same vertical column.
We can therefore see each ``horizontal dimer/lozenge'' (we call them
``beads'' hereafter\footnote{A similar terminology was adopted in
\cite{Bout_bead} for a model where bead positions take real values:
such continuous model can be obtained from the dimer
coverings of the hexagonal lattice in the limit where the density of
horizontal dimers tends to zero, by suitably rescaling the lattice.}) as attached to a 
 ``column'' (an infinite vertical stack of hexagons): the bead can move up and down along the column but not
change column. 
The set of possible bead positions can be identified with $\mathbb Z$
on, say, even columns and with $\mathbb Z+1/2$ on odd columns.
Note that beads of neighboring columns are interlaced: if on column
$\ell$ there are two beads at positions $z_1<z_2$ then necessarily in
column $\ell-1$ there is a bead at a position $z_1<z_3<z_2$ and
similarly for column $\ell+1$.

\begin{definition}
\label{def:I+-}
For each dimer configuration $\sigma$ and bead $b$ we let
$I^+_b=I^+_b(\sigma)$ be the collection of available positions above
it, i.e. positions that $b$ can reach via a concatenation of
elementary moves that do not touch any other bead and do not violate
the interlacing constraints. We define similarly $I^-_b=I^-_b(\sigma)$
as the collection of available positions below it.
\end{definition}

\begin{remark}
\label{rem:identificazione}
Given a finite or infinite subset $\Lambda$ of $\mathcal H$, we denote $\sigma|_\Lambda$ the dimer configuration restricted to $\Lambda$ and
$\eta|_\Lambda$ the configuration of beads restricted to
$\Lambda$. If $\Lambda=\mathcal H$ we omit the index $\Lambda$.
If (as will be the case in Theorem \ref{th:losanghe} below) every column
contains at least one bead,  $\sigma$ can be reconstructed by knowing just $\eta$. In this case we will identify a dimer covering with a bead configuration.  
\end{remark}

\subsection{Height function}
\label{sec:altezza} 
On  $\mathcal H$, we take a coordinate frame where the axis $\vec e_1$ forms
a clockwise angle $+5\pi/6$ with the usual horizontal axis and the
axis $\vec e_2$ an angle $+\pi/6$, see Figure \ref{fig:losanghe}.  We
also set $\vec e_3=-\vec e_1-\vec e_2$ to be the vertical unit vector.
\begin{definition}Let $\mathcal H^*$ denote the dual graph of
  $\mathcal H$ (it is a triangular lattice, whose vertices are
  vertices of lozenges). Vertices of $\mathcal H^*$ are as usual
  identified with hexagonal faces of $\mathcal H$.

The height function $h:\mathcal H^*\mapsto \mathbb Z$ is an
integer-valued function, defined up to an arbitrary additive constant. When
moving one step in the $\vec e_1$ or $\vec e_2$ direction,  the
height increases by $1$ when a dimer (or equivalently lozenge) is crossed and stays constant
otherwise.
\end{definition}
Note that, with this convention, $h$ corresponds to \emph{minus} the
height function with respect to the horizontal plane, and observe also
that when moving one step in the $\vec e_3$ direction, $h$ decreases by
$1$ if no dimer is crossed and stays constant otherwise. 

Given  ${ \rho}=(\rho_1,\rho_2)\in\mathbb R^2$ with $0<\rho_1,\rho_2<1$, and
$0<\rho_1+\rho_2<1$ (we call ${\rho}$ a \emph{non-extremal slope})
there exists a unique translation-invariant ergodic Gibbs
state $\pi_{{\rho}}$ with slope  $\rho$. This is a translation invariant
probability law on the set of dimer coverings of $\mathcal H$, that
satisfies (cf. \cite[Sec. 6]{Klecturenotes}):
\begin{itemize}
\item $\pi_{{\rho}}$ is
ergodic with respect to translations by $a \vec e_1+b \vec e_2, a,b\in \mathbb
Z$;
\item  it satisfies the Dobrushin-Lanford-Ruelle equations: conditionally on the
dimer configuration $\sigma_{\mathcal G^c}$ outside a given finite subset  $\mathcal G\subset
\mathcal H$, $\pi_{{\rho}}$ is the uniform measure over all dimer
coverings $\sigma_{\mathcal G} $ of $\mathcal G$ compatible with
$\sigma_{\mathcal G^c}$, i.e. such that $\sigma_{\mathcal G^c}\cup
\sigma_{\mathcal G} $  is a dimer covering of $\mathcal H$;
\item it has slope $\rho$, i.e. $\pi_{{\rho}}(h_{x+\vec
    e_i}-h_{x})=\rho_i, i=1,2$.
%and $\pi_{{\rho}}(h_{x-\vec e_1-\vec e_2}-h_{x})=\rho_2$.
\end{itemize}
Note that $\rho_1$ is the density of
south-east oriented lozenges, $\rho_2$ is the density of north-east
lozenges and $\rho_3:=1-\rho_1-\rho_2$ the density of horizontal lozenges.
The non-extremality requirement on ${\rho}$ means that all three types of lozenges have non-zero
density.

The measure $\pi_\rho$ is in a sense completely known and  has a
determinantal representation, that we recall here briefly (cf. in
particular \eqref{eq:multidimero}), since it
will be needed in the following. See \cite{KOS,Klecturenotes} for further details. First of
all, color sites of $\mathcal H$ white/black according to whether they
are the left/right endpoint of a horizontal edge  and let $\mathcal
H_W,\mathcal H_B$ be the sub-lattice of white/black vertices. We
denote $\rw_{0},\rb_{0}$ the black/white vertices indicated in
Figure \ref{fig:losanghe} and we let $\rw_{x},\rb_{x}$, with
$x=(x_1,x_2)\in\mathbb Z^2$, be the
translation of $\rw_{0},\rb_{0}$ by $x_1\vec e_1+x_2\vec e_2$.

Take a triangle with angles $\theta_i=\pi\rho_i, i=1,2,3$ and let $k_i,i=1,2,3$ be the length
of the side opposite to $\theta_i$. 
Define the Kasteleyn matrix $K=\{K(\rb,\rw)\}_{\rb\in\mathcal
H_B,\rw\in\mathcal H_W}$ as follows: If $\rb,\rw$ are not nearest
neighbors, then $K(\rb,\rw)=0$. If they are nearest neighbors, then
$K(\rb,\rw)=k_1$ or $k_2$ or $k_3$ according to whether the edge
$\rb\rw$ is oriented south-east,  north-east  or horizontal.

Define also the matrix $K^{-1}=\{K^{-1}(\rw,\rb)\}_{\rw\in\mathcal H_W,\rb\in\mathcal
H_B}$ as
\begin{gather}
\label{eq:K-1}
  K^{-1}(\rw_{x},\rb_{x'})=\frac1{(2\pi i)^2}\int_{\mathbb
    T}\frac{z^{-(x'_2-x_2)}w^{x_1'-x_1}}{P(z,w)}\frac{dz}{z}\frac{dw}{w}
:=\frac1{(2\pi i)^2}\int_{\mathbb
    T}\frac{z^{-(x'_2-x_2)}w^{x_1'-x_1}}{k_3+k_1 z+k_2w}\frac{dz}{z}\frac{dw}{w}
\end{gather}
where the integral is taken over the two-dimensional unit torus $\mathbb
T:=\{(z,w)\in\mathbb C^2: |z| = |w| = 1\}$. The long-distance behavior
of $K^{-1}$ is precisely known \cite{KOS}: since the polynomial $P$
has two simple zeros on the torus, $K^{-1}$ decays like the inverse of
the distance
so that  in particular
\begin{gather}
  \label{eq:64}
|   K^{-1}(\rw_{0},\rb_{x})|\le \frac {C(\rho)}{|x_1|+|x_2|+1}
\end{gather}
with $C(\rho)<\infty$
(this in general fails if $\rho$ is extremal, e.g. if only one of the three
dimer orientations has positive density).

Given a set of (not necessarily horizontal) edges
$e_1=(\rw_1,\rb_1),\dots,e_k=(\rw_k,\rb_k)$ of $\mathcal H$, the correlation function
$\pi_\rho(\delta_{e_1}\dots
\delta_{e_k})$ (with $\delta_e$ the indicator function that there is a
dimer at $e$) is given by
\begin{gather}
\label{eq:multidimero}
  \pi_\rho(\delta_{e_1}\dots\delta_{e_k})=\left(\prod_{i=1}^k
    K(\rb_i,\rw_i)\right)\det\left(K^{-1}(\rw_i,\rb_j)\right)_{1\le
    i,j\le k}.
\end{gather}
Note, also in view of formula \eqref{eq:K-1},  that the r.h.s. of \eqref{eq:multidimero} is invariant if we
multiply all $k_i$ by a common factor $c$, so that we may for instance
fix the sum $k_1+k_2+k_3$ to $1$.
\subsection{Definition of the dynamics and stationarity of Gibbs states}
\label{sec:dyna}
The dynamics is informally defined as follows (cf. Fig. \ref{fig:biglie2}). To each column $\ell$ and to each
possible bead position $z$ (horizontal edge of $\mathcal H$) we associate two independent Poisson clocks of mean
$p\in[0,1]$ and $q\in[0,1]$ respectively.
We call them $p$-clocks and $q$-clocks, with obvious meaning.  Clocks
at different locations are independent. 
When a
$p$-clock (resp. a $q$-clock) at $(\ell,z)$ rings, if $(\ell,z)$ is
occupied by a bead we do nothing. Otherwise, we look at the highest
(resp. lowest) bead
(if any) on column
$\ell$ that is at position lower (resp. higher) than $z$: if it can be moved
to $z$ without violating the interlacing constraints then we do so, otherwise we do
nothing.

\begin{figure}[h]
  \includegraphics[width=4cm]{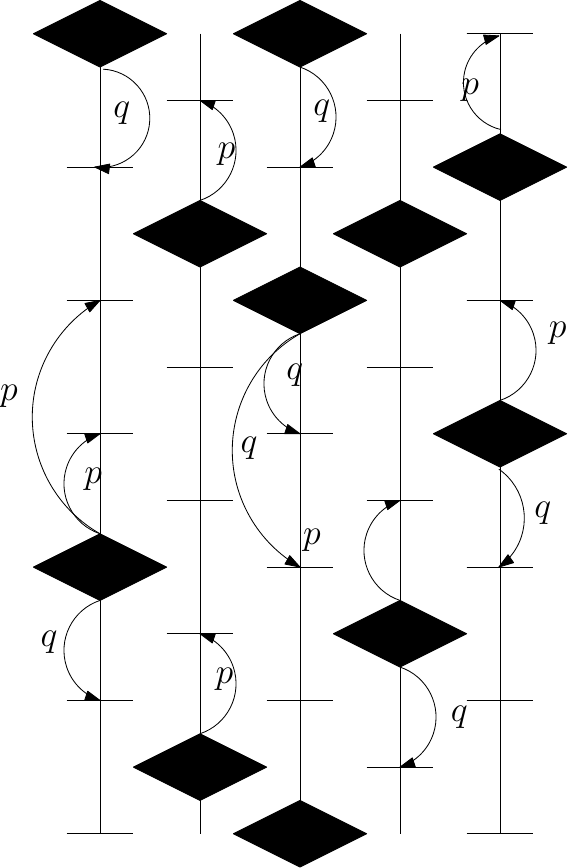}
\caption{A portion of the lattice with the allowed moves and the
  respective rates. Beads are drawn as black lozenges. When $p=1,q=0$
  or $p=0,q=1$ the process can
be seen as an infinite set of discrete Hammersley
processes, one per column, each interacting with the two
neighboring processes. If instead we allowed particles to jump only
by $\pm1$ with rates $p/q$, the process would be the single-flip
dynamics described in the introduction and would correspond to an
infinite collection of mutually interacting asymmetric simple
exclusion processes (ASEP), one per column.}
\label{fig:biglie2}
\end{figure}

It is not obvious that the process is well defined on the infinite
lattice. The danger is that
beads could escape to $+\infty $ or to $-\infty$ in finite
time (even in an arbitrarily small time). This may occur when 
spacings between beads
in the initial configuration
grow sufficiently fast at infinity.
The problem is that the
rate at which a bead moves, say, upward is $p\times |I^+_b|$ and the average
size of  the jump is $(|I^+_b|+1)/2$,  and $|I^+_b|$ is not
bounded. 

Our first result (Theorem \ref{th:losanghe}) is that the process is well defined for almost every
initial condition sampled from  $\pi_{{\rho}}$ and that $\pi_{{\rho}}$ is
invariant. ``Well-defined'' means that the
displacement of every bead with respect to its position at time zero
is almost surely finite for every $t\ge0$.
In the symmetric case $p=q$, assuming that the process is
well defined, invariance of the Gibbs measure is obvious because
it is reversible.

\smallskip

To precisely formulate the result, let us start by defining, given
$K=(K_p,K_q)\in (\mathbb R^+)^2$, a cut-off process where $p$-clocks
at distance more than $K_p$ (resp. $q$-clocks at distance more than
$K_q$) from the origin of $\mathcal H$ are switched off.   As long as
$K_p,K_q<\infty$ there is no problem in defining the process on the
whole $\mathcal H$, since this is effectively a Markov jump process on
a finite state space (once a particle is inside the ball of radius
$\max(K_q,K_p)$ it cannot leave it and therefore there is only a
finite number of particles, determined by the initial condition, that
can ever move).  We call $X^\sigma_{t;K}$ the configuration at time
$t$, started from initial condition $\sigma$.  Given a column $\ell$,
let $z_t(\ell,n;K)$ be the position of its $n^{th}$ bead at time $t$,
with $z_t(\ell,n;K)<z_t(\ell,n+1;K)$. The label $n$ is assigned in the
initial condition and is attached to beads forever.  For instance, one
can assign the label $(\ell,0)$ to the lowest bead in $\ell$ with
non-negative vertical coordinate (in the initial condition). We assume
hereafter that in each column there is a doubly infinite set of beads,
i.e. the index $n$ runs over all of $\mathbb Z$.
% Also, by convention, if in column $\ell$
% there is a highest bead (resp. a lowest bead) then we put
% $z(\ell,n)=+\infty$ (resp. $-\infty$) for the beads
% above (resp. below) it. ]#

Two processes
with different cut-offs $K$ and $K'$ can be coupled in the obvious way:
their $p$-clocks (resp. $q$-clocks) are the same in the ball of radius
$\min(K_p,K'_p)$ (resp. $\min(K_q,K'_q)$).
It is then easy to check that $z_t(\ell,n;K)$ is increasing w.r.t. $K_p$ and
decreasing w.r.t. $K_q$. 
We will then define
\begin{eqnarray}
  \label{eq:37}
 z_t(\ell,n)=\lim_{K_q\to\infty}\lim_{K_p\to\infty} z_t(\ell,n;K)
\end{eqnarray}
to be the position of the $(\ell,n)$-th bead at time $t$ for the process
without cut-off.

Assuming that $z_t(\ell,n)$ is finite for every $(\ell,n)$, 
call   $X_t$ the corresponding bead 
   configuration %started from initial condition
  % $\sigma$ an
and let
$\mathbb P_\nu$ be the law of the process $(X_t)_{t\ge0}$
  started with initial distribution $\nu$ (if $\nu$ is concentrated at
  some $\sigma$, then we write just $\mathbb P_{\sigma}$). 
\begin{theorem}
\label{th:losanghe}
For almost every initial condition sampled from $\pi_{{\rho}}$,
with ${\rho}$ a non-extremal
  slope,  the limit \eqref{eq:37} is almost surely finite for all $(\ell,n)$ and
  $t\ge 0$. Moreover, $\pi_{{\rho}}$
is invariant. More
  precisely, if $f$ is a local bounded function of the dimer configuration 
  one has for every $t\ge0$
  \begin{eqnarray}
    \label{eq:32}
 \mathbb E_{\pi_{\rho}}(f(X_t))= \int  \pi_{{\rho}}(d\sigma) \mathbb
 E_\sigma f(X_t)= \pi_{{\rho}}(f).
  \end{eqnarray}
\end{theorem}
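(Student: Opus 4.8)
The plan is to prove stationarity on the periodized torus first, then transfer the result to the infinite lattice by a coupling argument that controls bead displacements, using the one-dimensional Hammersley process estimates of Sepp\"al\"ainen \cite{Seppa} as a key input.

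\emph{Step 1: stationarity on the torus.} Fix $L$ and consider the bead dynamics on the torus $\mathbb T_L$ (the hexagonal lattice periodized at scale $L$ in both lattice directions), with $\pi_\rho$ replaced by the appropriate finite-volume Gibbs measure $\pi_\rho^{(L)}$ with the same slope; since the state space is finite, the process is trivially well-defined. The generator $\mathcal L_L$ is a finite sum of jump terms, one per column $\ell$, position $z$, and sign $\pm$. I would show that $\pi_\rho^{(L)}$ is invariant for $\mathcal L_L$ by verifying the single identity $\sum_\sigma \pi_\rho^{(L)}(\sigma)(\mathcal L_L f)(\sigma)=0$ for every local $f$. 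Because a deposition move at $(\ell,z)$ is exactly undone by a removal move targeting the vacated position, one can pair up transitions: for a single column, the bead dynamics restricted to that column with its neighbours frozen is (a finite version of) the Hammersley process, for which the uniform measure on interlacing configurations is known to be stationary. The point is that the DLR property of $\pi_\rho^{(L)}$ makes the conditional law in any column, given the neighbouring columns, uniform over interlacing bead arrangements, and the Hammersley-type generator in that column preserves precisely this uniform conditional law. Summing the column-by-column balance relations against $\pi_\rho^{(L)}$ gives global invariance. This is essentially the content of Section~\ref{sec:sultoro} as announced; I expect it to be a clean but slightly bookkeeping-heavy computation.

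\emph{Step 2: the infinite-volume process is well-defined $\pi_\rho$-a.s.} This is the heart of the matter. Using the monotonicity of $z_t(\ell,n;K)$ in $K_p,K_q$ (already noted in the excerpt), the limit \eqref{eq:37} exists in $[-\infty,+\infty]$; the issue is finiteness. I would run the cut-off process with both cut-offs finite and bound the upward displacement of the $(\ell,n)$-th bead by a comparison with an \emph{auxiliary, dominating} process in which interlacing constraints from neighbouring columns are dropped — i.e. a single Hammersley process on column $\ell$ with rate-$p$ clocks, whose jumps are \emph{larger} than in the true dynamics (removing constraints can only let a bead travel further). For the Hammersley process started from a configuration whose gaps have the law induced by $\pi_\rho$ (which, by the determinantal formula \eqref{eq:multidimero} and the decay bound \eqref{eq:64}, has exponential tails on gaps and hence on local particle counts), Sepp\"al\"ainen's construction \cite{Seppa} gives that the process is a.s. well-defined and that displacements are a.s. finite for all $t$, with quantitative tail bounds uniform in the cut-off. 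By the union bound over $(\ell,n)$ (and a Borel--Cantelli argument along a subsequence of cut-offs), one concludes that $z_t(\ell,n)$ is a.s. finite simultaneously for all $(\ell,n)$ and all rational $t$, hence all $t$ by monotonicity in $t$. A symmetric argument handles the downward direction via the $q$-clocks.

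\emph{Step 3: passage to the limit and invariance.} Once finiteness is established, I would couple the infinite-volume process with the torus process of side $L$: on a fixed space-time window $[-R,R]^2\times[0,T]$, the two dynamics use the same Poisson clocks and agree until some bead from outside the window influences the window. Using the tightness bounds from Step~2 (the $\Delta^\pm_{x,y}$ variables stay tight uniformly in $t\le T$) together with finite speed of propagation \emph{on the event that all displacements in a large box are small}, the probability of a discrepancy in the window goes to $0$ as $L\to\infty$. Since $\pi_\rho^{(L)}\to\pi_\rho$ weakly (standard for dimer Gibbs measures) and $\mathbb E_{\pi_\rho^{(L)}}(f(X_t))=\pi_\rho^{(L)}(f)$ by Step~1 for local $f$, taking $L\to\infty$ yields $\mathbb E_{\pi_\rho}(f(X_t))=\pi_\rho(f)$, which is \eqref{eq:32}.

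\emph{Main obstacle.} The crux is Step~2: showing that the out-of-equilibrium, coupled system of infinitely many interacting Hammersley processes does not let beads escape to infinity, starting from a $\pi_\rho$-typical configuration. The interlacing interaction is a help rather than a hindrance for the a~priori bound (it can only shorten jumps), so dominating by independent one-column Hammersley processes and invoking \cite{Seppa} is the natural route; the delicate point is to make the tail estimates uniform in the cut-offs $K_p,K_q$ so that they survive the limit \eqref{eq:37}, and to control the joint behaviour of all columns simultaneously rather than one at a time.
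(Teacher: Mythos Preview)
Your Steps 2 and 3 are essentially the paper's approach: well-definedness via domination by a single-column Hammersley process (Proposition~\ref{th:ex}), and passage to the infinite lattice via a coupling with the torus dynamics together with a control on maximal gaps (Lemmas~\ref{th:deltamaxToro}, \ref{th:deltamax}) and a chain-of-clock-rings propagation argument (Proposition~\ref{th:storieidentiche}). The paper's version of Step 3 is somewhat more delicate than you indicate---one needs the gap bound \emph{during} the out-of-equilibrium evolution, not just at time zero, and on the infinite lattice this cannot be obtained from stationarity (which is what one is trying to prove) but must come from the Hammersley comparison again---but your outline is on the right track there.

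Step 1, however, contains a genuine error. You claim that the generator restricted to a single column, with the neighbouring columns frozen, preserves the uniform conditional law. This is false when $p\ne q$. Take the simplest case: a single bead confined to an interval of $n$ positions (the interval being determined by the frozen neighbours). From position $k$ the total out-rate is $p(n-k)+q(k-1)$, while the total in-rate under the uniform law is $p(k-1)+q(n-k)$; these agree for all $k$ only if $p=q$. So the single-column dynamics is \emph{not} stationary for the uniform conditional, and the column-by-column balance you describe does not hold.

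The actual mechanism behind Proposition~\ref{th:invarianzatoro} is global: one shows that $\sum_b|I^+_b|=\sum_b|I^-_b|$ for \emph{every} configuration on the torus, by checking that the difference is invariant under elementary moves (the change $+2$ from the moving bead is exactly compensated by $-1$ from each of the two neighbouring columns). The stationarity of $\pi^L_\rho$ then follows because the in-rate and out-rate at any $\sigma$ are both $p\sum_b|I^+_b|$ (for $\mathcal L^{+,L}$) and hence equal. This cancellation across columns is precisely the ``gradient condition'' discussed in Section~\ref{sec:gradiente}, and it is the non-obvious combinatorial fact that makes the theorem work; it cannot be replaced by a conditional-law argument.
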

Here, a function $f$ is said to be local if it depends only on
$\sigma_\Lambda$ for some finite $\Lambda$.
It is also possible to see (cf. Remark \ref{rem:scambio}) that
 the limit \eqref{eq:37} does not depend on the order how one takes
 the limits
  $\lim_{K_q\to\infty}$ and $\lim_{K_p\to\infty}$.

Theorem \ref{th:losanghe} is proven partly in Section \ref{sec:well-d} (existence of the
dynamics)  and partly in Section \ref{sec:parte2}
(invariance of $\pi_\rho$).

\begin{remark}
  With this result in hand, it is clear that one can construct many
  other driven processes that leave $\pi_\rho$ invariant, simply
  adding to the generator of the bead dynamics another generator
  $\mathcal L$ with  respect to which $\pi_\rho$ is reversible (for instance, $
  \mathcal L$ could be the
  generator of the single-flip dynamics with symmetric rates).
\end{remark}

\medskip
It is a relatively standard fact to deduce from Theorem \ref{th:losanghe}
that, if we start from
$\pi_{\rho}$ conditioned to have a bead say at the origin, then the
law of the dimer
configuration re-centered at the time-evolving position of this
marked bead (\emph{tagged particle}) 
is time-independent, see Section \ref{sec:palm}.
More precisely, 
fix a horizontal edge $e_0$ of $\mathcal H$. Given an initial condition $\sigma$
such that there is a bead at $e_0$, call $\phi_t$ its vertical coordinate at time
$t$ (the horizontal coordinate does not change). Let also $\hat
X_t:=\tau_{\phi_t-\phi_0} X_t$, with $\tau_x$ the vertical translation by 
$x\in \mathbb Z$, be the dimer configuration viewed from the tagged
bead and call $\hat {\mathbb P}_\nu$ the law of the process $(\hat X_t)_{t\ge0}$
started from some initial distribution $\nu$.
Finally, let 
$\hat \pi_{\rho}$ be the Gibbs measure $\pi_{\rho}$ conditioned on the
event that there is a bead at $e_0$. 

\begin{proposition}
\label{th:palm}
  The measure $\hat \pi_{\rho}$ is invariant for the dynamics of the dimer configuration viewed from the tagged
bead: for every  bounded local function $f$ and $t\ge0$, 
\begin{eqnarray}
  \label{eq:51}
  \hat\pi_{\rho}(f(\hat X_t))=\hat \pi_{\rho}(f).
\end{eqnarray}
\end{proposition}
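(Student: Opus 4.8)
The plan is to deduce Proposition~\ref{th:palm} from Theorem~\ref{th:losanghe} by a Palm-measure (``mass transport'') argument. The only non-formal ingredient will be an estimate on the number of beads that cross a large window during a fixed time interval, and that estimate will follow from the a.s.\ finiteness of bead displacements asserted in Theorem~\ref{th:losanghe}.

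First some bookkeeping. Let $\ell_0$ be the column containing $e_0$, whose possible bead positions I identify with $\mathbb Z$ (a coset of $\mathbb Z$ on odd columns, which changes nothing), $e_0$ being the position at height $0$; write $e_x$ for the horizontal edge at height $x$ in $\ell_0$. For a configuration $\sigma$ having a bead $b$ at $e_x$, let $x_b(t)$ be the height at time $t$ of the bead with label $b$, and let $\sigma^{(b)}$ be $\sigma$ translated vertically so that $b$ sits at $e_0$, so that in the notation of the proposition $\hat X_t=X_t^{(b_0)}$, where $b_0$ is the tagged bead. Since $\pi_\rho(\delta_{e_0})=\rho_3>0$ ($\rho$ non-extremal), one has $\hat\pi_\rho\le\rho_3^{-1}\pi_\rho$, hence by Theorem~\ref{th:losanghe} the dynamics is $\hat\pi_\rho$-a.s.\ well defined and $\hat X_t$ makes sense. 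Because $\pi_\rho$ and the field of Poisson clocks are invariant under vertical lattice translations, for each $x$ the displacement $x_b(t)-x$ of a bead $b$ at $e_x$, under $\pi_\rho(\,\cdot\mid\delta_{e_x}=1)$, has the same law — namely the law of $\phi_t-\phi_0$ under $\mathbb P_{\hat\pi_\rho}$ — and, translating term by term, for every bounded local function $G$ and every $t\ge0$,
\[
\sum_{x=0}^{N-1}\mathbb E_{\pi_\rho}\big(\delta_{e_x}\,G\big)=N\,\rho_3\,\hat{\mathbb E}_{\hat\pi_\rho}(G).
\]

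Now fix $f$ bounded local and $t\ge0$ and apply this with $G=\mathbb E_\sigma\big[f(X_t^{(b)})\big]$, $b$ the bead at $e_x$: by the translation covariance just noted every summand equals $\rho_3\,\hat{\mathbb E}_{\hat\pi_\rho}(f(\hat X_t))$, so regrouping the sum over beads,
\[
N\rho_3\,\hat{\mathbb E}_{\hat\pi_\rho}\big(f(\hat X_t)\big)=\mathbb E_{\pi_\rho}\Big[\textstyle\sum_{b\in\ell_0:\,x_b(0)\in[0,N)}f\big(X_t^{(b)}\big)\Big].
\]
I then replace the index constraint $x_b(0)\in[0,N)$ by $x_b(t)\in[0,N)$ on the right-hand side: the change is at most $\|f\|_\infty$ times the expected number of beads of $\ell_0$ that enter or leave $[0,N)$ by time $t$. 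By invariance $X_t\sim\pi_\rho$, so $\#\{b\in\ell_0:x_b(t)\in[0,N)\}$ has the same mean as $\#\{b\in\ell_0:x_b(0)\in[0,N)\}$, whence the expected number of beads entering equals the expected number leaving; and a bead starting at $e_x$ with $x\in[0,N)$ that leaves has displacement at least $\min(x,N-1-x)$, so, using the uniform displacement law above, the expected number leaving is $\le 2\sum_{d=0}^{N}\mathbb P_{\hat\pi_\rho}(|\phi_t-\phi_0|\ge d)=o(N)$, since $|\phi_t-\phi_0|<\infty$ a.s.\ by Theorem~\ref{th:losanghe}. Hence the right-hand side above equals $\mathbb E_{\pi_\rho}\big[\sum_{b\in\ell_0:\,x_b(t)\in[0,N)}f(X_t^{(b)})\big]+o(N)$, and since $X_t\sim\pi_\rho$ the identity of the second paragraph applied at $t=0$ identifies this expectation with $N\rho_3\,\hat\pi_\rho(f)+o(N)$. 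Dividing by $N\rho_3$ and letting $N\to\infty$ gives \eqref{eq:51}.

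The only substantial step is the $o(N)$ window-crossing bound; everything else is routine Palm calculus. I expect the delicate point there to be that one should not assume summable displacement tails: the invariance of $\pi_\rho$ is used precisely to trade ``beads entering $[0,N)$'' for ``beads leaving it'', so that the mere a.s.\ finiteness of the tagged-bead displacement, which is already part of the statement of Theorem~\ref{th:losanghe}, suffices.
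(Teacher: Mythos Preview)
Your argument is correct and takes a genuinely different route from the paper. The paper deduces Proposition~\ref{th:palm} by redoing the torus approximation: it couples $\hat\pi_\rho$ with the finite-volume Palm measure $\hat\pi^L_\rho$, invokes the torus result (Proposition~\ref{th:palmtoro}), and shows that the two re-centered processes agree locally with high probability up to time $T$, using the DHD comparison to keep the tagged bead in a bounded window. You instead work entirely on the infinite lattice and derive the Palm statement directly from Theorem~\ref{th:losanghe} via a mass-transport identity: average over the $N$ possible initial positions of the tagged bead, swap the constraint $x_b(0)\in[0,N)$ for $x_b(t)\in[0,N)$ at an $o(N)$ cost, and then read off $\hat\pi_\rho(f)$ from stationarity of $\pi_\rho$.

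What your approach buys is conceptual economy: once Theorem~\ref{th:losanghe} is in hand, you need no second coupling argument and no quantitative tail bound on the tagged-bead displacement (Lemma~\ref{lemma:ndovai}); the a.s.\ finiteness of $\phi_t-\phi_0$ is enough, via the Ces\`aro step $\tfrac1N\sum_{d\le N}\mathbb P_{\hat\pi_\rho}(|\phi_t-\phi_0|\ge d)\to0$. The paper's approach, on the other hand, stays parallel to the proof of Theorem~\ref{th:losanghe} and reuses exactly the same machinery, which keeps the exposition uniform. One cosmetic point: your first displayed identity is literally written for a single $G$ independent of $x$, whereas you apply it with a $G$ that is defined relative to the bead at $e_x$; the phrase ``translating term by term'' shows you have the right argument in mind, but it would be cleaner to state directly that $\mathbb E_{\pi_\rho}[\delta_{e_x}\,\mathbb E_\sigma f(X_t^{(b_x)})]=\rho_3\,\hat{\mathbb E}_{\hat\pi_\rho}[f(\hat X_t)]$ for every $x$, by joint translation invariance of $\pi_\rho$ and the clocks.
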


\section{Interface speed and fluctuations}
% \medskip
 %**** THE REST OF THE SECTION HAS STILL TO BE SETTLED

\label{sec:intfluctu}

The stationary states $\pi_{\rho}$ are characterized by an upward or downward flux of
beads, according to whether $p>q$ or $p<q$.
The particle flux is directly related to the average height increase
in the stationary state. 
While the height function was defined only up to an additive constant, one
can define unambiguously the increase of the height at a face $x$ from time
$0$ to $t$: $Q_x(t):=h_x(t)-h_x(0)$ equals the number of beads that 
cross the face $x$ downward up to time $t$, minus the number of beads that cross it upward.

For each horizontal bond $e$ let
$b^+(e)$ (resp. $b^-(e)$) be the lowest (resp. highest) bead in the column of $e$, at vertical
position strictly higher (resp. strictly lower) than $e$. 
Also, call $V(e,\uparrow)$ the collection of hexagons that
$b^-(e)$ has to cross to reach position $e$  and set
$V(e,\uparrow)=\emptyset$ if this move is not possible (keeping the
other beads where they are).
Define $V(e,\downarrow)$ similarly.

The following result identifies the average height drift and shows
that the fluctuations of $Q_x(t)$
in the stationary measure
are smaller than any power of $t$:
\begin{theorem}
\label{th:vvar}
  For any face $x$,
\begin{gather}
\label{eq:Qx}
  \mathbb E_{\pi_\rho}(Q_x(t))=t\,(q-p)\,J
\end{gather}
with 
\begin{gather}
\label{eq:J}
  J=\pi_\rho(|\{e:x\in V(e,\uparrow)\}|)>0.
\end{gather}
For every $\delta>0$, %of order $\sqrt{\log t}$:
\begin{gather}
\label{eq:varianzaQ}
\lim_{t\to\infty}  \mathbb P_{\pi_\rho}(|Q_x(t)-\mathbb E_{\pi_\rho}(Q_x(t))|\ge
  t^\delta)=0.
\end{gather}
\end{theorem}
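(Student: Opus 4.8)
\emph{Strategy for the drift formula \eqref{eq:Qx}--\eqref{eq:J}.} The plan is to write $Q_x(t)=N^{\downarrow}_x(t)-N^{\uparrow}_x(t)$, where $N^{\uparrow}_x(t)$ (resp.\ $N^{\downarrow}_x(t)$) counts the beads crossing the face $x$ upward (resp.\ downward) in $[0,t]$. In the graphical construction of Section~\ref{sec:dyna} these are exactly the $p$-clock rings at a horizontal bond $e$ that move $b^-(e)$ past $x$ (resp.\ the $q$-clock rings moving $b^+(e)$ past $x$), so $N^{\uparrow}_x$ and $N^{\downarrow}_x$ are counting processes with predictable intensities $p\,|\{e:x\in V(e,\uparrow)(X_s)\}|$ and $q\,|\{e:x\in V(e,\downarrow)(X_s)\}|$. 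One first checks, directly from the determinantal description of $\pi_\rho$, that $J:=\pi_\rho(|\{e:x\in V(e,\uparrow)\}|)$ is finite --- a bond $e$ can contribute only if it lies between $x$ and the first bead above it, and the gap to that bead has exponentially decaying tails because $\rho_3>0$ --- and strictly positive --- with positive probability there is a bead just below $x$ that may be pushed past $x$, a local event. Then Dynkin's formula, applied to the cut-off processes $X_{t;K}$ and passed to the limit using the a priori tightness of the $|I^{\pm}_b|$ obtained in the proof of Theorem~\ref{th:losanghe}, gives $\mathbb E_{\pi_\rho}N^{\uparrow}_x(t)=p\,t\,\pi_\rho(|\{e:x\in V(e,\uparrow)\}|)$ and similarly for $N^{\downarrow}_x$. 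Finally I would use that rotation by $\pi$ of the plane is a symmetry of $\mathcal H$ that maps each lozenge orientation to itself and reverses the vertical direction; by uniqueness of the translation-invariant ergodic Gibbs state of slope $\rho$ it preserves $\pi_\rho$, and it maps $\{e:x\in V(e,\uparrow)\}$ onto $\{e:x'\in V(e,\downarrow)\}$ for a suitable face $x'$. Together with translation invariance this yields $\pi_\rho(|\{e:x\in V(e,\downarrow)\}|)=\pi_\rho(|\{e:x\in V(e,\uparrow)\}|)=J$, and \eqref{eq:Qx} follows.

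\emph{Strategy for the fluctuation bound \eqref{eq:varianzaQ}.} By Chebyshev's inequality it suffices to show that $\var_{\pi_\rho}(Q_x(t))$ grows slower than any power of $t$ (and, under the technical slope restriction, is $O(\log t)$). The first, soft, ingredient is that \emph{differences} $Q_x(t)-Q_y(t)$ only involve static fluctuations of $\pi_\rho$: since $Q_x(t)-Q_y(t)=[h_x(t)-h_y(t)]-[h_x(0)-h_y(0)]$ and the law of height gradients is time-invariant by Theorem~\ref{th:losanghe},
\[
\var_{\pi_\rho}\bigl(Q_x(t)-Q_y(t)\bigr)\;\le\;4\,\var_{\pi_\rho}(h_x-h_y)\;\le\;C\,\log\bigl(2+|x-y|\bigr),
\]
the last bound being the logarithmic growth of dimer height fluctuations, a consequence of \eqref{eq:multidimero}--\eqref{eq:64} and \cite{KOS}. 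Hence it remains to control the common ``zero mode'', i.e.\ $Q_y(t)$ for one conveniently chosen $y$, and for this I would invoke Proposition~\ref{th:palm} (or a Palm-measure variant of it). Assuming $p>q$ (upward flux; the case $p<q$ is symmetric), $-Q_x(t)$ equals the number of beads that are below $x$ at time $0$ and above $x$ at time $t$; since beads in a column never exchange order, this is the number of beads of $X_t$ in the column of $x$ lying strictly between $x$ and the time-$t$ position $\phi_t$ of the lowest bead that was above $x$ at time $0$. Tagging that bead, the environment seen from it is stationary, so the count is $\rho_3\,\phi_t+O(\sqrt{\log(2+|\phi_t|)})$ with high probability; combined with the crude ballistic bound $0\le\phi_t\le Ct$ valid with high probability (which itself follows from the finite second moment of $|I^{+}_b|$), this reduces \eqref{eq:varianzaQ} to the estimate that $\var(\phi_t)$ grows subpolynomially (resp.\ is $O(\log t)$) for the displacement of a tagged bead.

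\emph{The main obstacle.} The hard point is this last estimate: no soft argument can give it, because the martingale term in Dynkin's decomposition of $Q_x(t)$ (equivalently of $\phi_t$) has quadratic variation of exact order $t$, so the bound forces an almost total cancellation of this martingale against the fluctuations of its compensator --- the probabilistic face of Wolf's prediction that the roughness in the anisotropic KPZ class is only logarithmic. To realise the cancellation I would (i) use attractiveness of the bead dynamics together with the tightness of the $|I^{\pm}_b|$ to couple the infinite-lattice process with the torus process and with processes started from ``integrable'' initial data; (ii) in the totally asymmetric hexagonal case, import the exact space--time correlation computations of Borodin--Ferrari \cite{BF1,BF2}, which already yield $\sqrt{\log t}$ fluctuations for a specific initial condition, and transport them to the stationary state through that coupling; (iii) in the general case, obtain the (non-sharp) subpolynomial bound by running Sepp\"al\"ainen's variance estimates for the one-dimensional Hammersley process \cite{Seppa} column by column and combining them with the correlation decay of $\pi_\rho$ to control the inter-column interaction. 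Once $\var_{\pi_\rho}(Q_x(t))$ is shown to grow subpolynomially in $t$, \eqref{eq:varianzaQ} follows at once from Chebyshev's inequality.
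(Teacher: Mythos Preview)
Your treatment of the drift formula \eqref{eq:Qx} is essentially the same as the paper's: the paper differentiates $\mathbb E_{\pi_\rho}Q_x(t)$ in $t$, uses stationarity to make the instantaneous rate time-independent, and invokes the reflection symmetry of $\pi_\rho$ through a hexagon to equate the $\uparrow$ and $\downarrow$ contributions. Your Dynkin/counting-process phrasing is equivalent.

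For \eqref{eq:varianzaQ}, however, your route diverges from the paper's and does not close. You correctly isolate the static piece --- $\var_{\pi_\rho}(Q_x(t)-Q_y(t))=O(\log|x-y|)$ --- and this is indeed one of the two ingredients the paper uses. But your reduction of the remaining ``zero mode'' to the variance of a tagged-bead displacement $\phi_t$ leaves you with exactly the hard problem, and the three mechanisms you list do not resolve it: (ii) importing Borodin--Ferrari's $\sqrt{\log t}$ result via coupling is not available, because their initial condition is deterministic and far from stationary, and there is no monotone sandwich that relates stationary fluctuations to theirs; (iii) Sepp\"al\"ainen's Hammersley estimates give at best $O(t)$ variance for a single column and provide no cancellation mechanism across columns; (i) is a hope rather than an argument. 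In short, you have correctly located the obstacle but not removed it.

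The paper bypasses the tagged-particle variance entirely by a \emph{spatial averaging} argument. Set $\ell=t$, let $\Lambda$ be an $\ell\times\ell$ box, and study $Q_\Lambda(t)=\sum_{x\in\Lambda}Q_x(t)$. Differentiating the variance $M_2(t)$ of $Q_\Lambda(t)$ gives
\[
\tfrac{d}{dt}M_2(t)\;\le\;2\sqrt{M_2(t)}\,\sqrt{\var_{\pi_\rho}(K_1)}\;+\;\pi_\rho(K_2),
\]
where $K_1(\sigma)=-p\sum_e|V(e,\uparrow)\cap\Lambda|+q\sum_e|V(e,\downarrow)\cap\Lambda|$ is the instantaneous drift of $Q_\Lambda$ and $K_2$ the corresponding second moment. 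The crucial estimate --- proved from the determinantal structure of $\pi_\rho$ via a Gram--Hadamard bound --- is $\var_{\pi_\rho}(K_1)=O(\ell^{2+\delta})$ for every $\delta>0$ (and $O(\ell^2\log\ell)$ under the slope condition \eqref{eq:ciro}), while $\pi_\rho(K_2)=O(\ell^2)$. Solving the differential inequality gives $M_2(\ell)=O(\ell^{4+\delta})$. Finally one writes $Q_\Lambda(\ell)-\langle Q_\Lambda(\ell)\rangle=\ell^2\bigl[Q_{x_0}(\ell)-\langle Q_{x_0}(\ell)\rangle\bigr]+(\text{static height sums})$; the static terms are $O(\ell^2\log\ell)$ by the logarithmic height variance, so the $\ell^2$ prefactor forces $|Q_{x_0}(\ell)-\langle Q_{x_0}(\ell)\rangle|\le\ell^{2\delta}$ with high probability. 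The cancellation you sought between martingale and compensator is thus achieved not dynamically but by summing over $\ell^2$ sites and exploiting that the \emph{spatial} fluctuations of the drift functional $K_1$ scale like a local observable, $O(\ell^{1+\delta/2})$, rather than the naive $O(\ell^2)$.
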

 Note that only edges $e$ in the same column as $x$ and above it can
contribute to $J$. The value of $J$ is independent of $x$ by
translation invariance of $\pi_\rho$. The r.h.s. of \eqref{eq:Qx} is linear in $t$ because of stationarity
of $\pi_\rho$ and linear in $(q-p)$ because the stationary state
$\pi_\rho$ does not depend on $p,q$.

Theorem \ref{th:vvar} is proven in Section \ref{sec:velfluct}.

\medskip

It is not obvious to compute $J$ explicitly in terms of
the slope $\rho$, starting directly
from the determinantal representation of the Gibbs states. 
In \cite{BF1,BF2}, A. Borodin and P. L. Ferrari
considered the dynamics for $p=0,q=1$ for a special, ``integrable'', initial condition $\omega$, whose 
height function $(h_0(x))_{x\in\mathcal H^*}$ is deterministic and has
non-constant slope (see Fig. 1.2 of \cite{BF1}: lozenges with a dot
correspond to our south-east oriented lozenges, white squares to
our north-east lozenges, while dark lozenges   correspond to our beads). %  Their
% initial condition is such that (with the conventions of our
% Fig. \ref{fig:asimm1} the subset of points $x$ of the plane such that
% the segment $0x$ forms an anticlockwise angle $\theta\in 
Let us emphasize that with such initial condition, each bead has a
\emph{deterministic} lowest position it can possibly reach on its
column (this is related to the fact that in \cite[Fig. 1.1, 1.2]{BF1}
there is no dotted lozenge with coordinate $n<0$), so that the
well-posedness of the process poses no problem in that case.
One of the results of \cite{BF1} is a hydrodynamic limit, that in our notations we
can formulate as follows:  for every $\xi,\eta$
and $\tau>0$
one has 
\begin{gather}
\lim_{L\to\infty}\frac1L  \mathbb E_{\omega} \left[h_{(\lfloor \xi L\rfloor ,\lfloor \eta L\rfloor)}(\tau L)\right]={\bf h}(\xi,\eta,\tau)
\end{gather}
and ${\bf h}$ satisfies 
\begin{gather}
\label{eq:V}
  \partial_\tau{\bf h}=V(\partial_\xi {\bf h},\partial_\eta{\bf h})
  =\frac1\pi\frac{\sin(\pi\partial_\eta{\bf h})\sin(\pi% \partial_\eta{\bf
      % h}+
     \partial_\xi{\bf
       h})}{\sin(\pi(\partial_\eta{\bf
       h}+\partial_\xi{\bf h}))}
\end{gather}
(this corresponds to formulas (1.9)-(1.11) in \cite{BF1}, after 
after a suitable change of coordinates due to the fact that in \cite{BF1,BF2} the height is not
taken with respect to the horizontal plane and a different reference
frame than our $\vec e_1,\vec e_2$ frame is used). %Here, $(\lfloor
% \xi L\rfloor ,\lfloor \eta L\rfloor)$ denotes the face of $\mathcal H$
% with coordinates $(\lfloor \xi L\rfloor ,\lfloor \eta L\rfloor)$ in
% the $\vec e_1,\vec e_2$ reference. 
From this, one can naturally guess that $J$ in \eqref{eq:J} should be given
by %\what
%$J=V(\rho_1,\rho_2)$.
 \begin{gather}
 \label{eq:Jesplicita}
   J=\frac1\pi\frac{\sin(\pi\rho_1)\sin(\pi\rho_2)}{\sin(\pi(\rho_1+\rho_2))}.
 \end{gather}
 Since $\rho_1, \rho_2 $ and $\rho_1+\rho_2$ are in $(0,1)$, the above
 expression is immediately seen to be positive.  After a first version
 of this work was completed, Chhita and Ferrari \cite{ChhitaFerrari}
 proved, through a smart combinatorial identity based on the
 determinantal structure of the Gibbs states, that indeed
 \eqref{eq:Jesplicita} holds.

By the way, Proposition 3.2 of \cite{BF1} says that the law of local dimer
observables around point 
$(\lfloor \xi L\rfloor,\lfloor \eta
L\rfloor)$ at  time $\tau L$ tends as $L\to\infty$ to that of the same
observables under the Gibbs state of slope
$\rho=(\partial_\xi {\bf h}(\xi,\eta,\tau),\partial_\eta {\bf
  h}(\xi,\eta,\tau))$. On the basis of this, it was natural to
conjecture that our Theorem \ref{th:losanghe} holds.

\medskip

  Referring to \eqref{eq:varianzaQ}, we believe that the order of magnitude of the variance of $Q_x(t)$
  is actually $\log t$: this is indeed the result found by Borodin and
  Ferrari \cite{BF1,BF2}, in the particular case where $p=0$, $q=1$
  and for the special initial
  condition $\omega$ mentioned above. 
In this respect, our  method allows indeed to refine  estimate
\eqref{eq:varianzaQ}, under a  (purely technical, we believe)
condition on the slope $\rho$, to the following:
\begin{theorem}
\label{th:32}
If the
slope $\rho$ satisfies 
\begin{gather}
  \label{eq:ciro}
\sqrt{k_1 k_2} C(\rho)<1,
\end{gather}
with $C(\rho)$ defined in \eqref{eq:speranum} and $k_1,k_2$ as in \eqref{eq:K-1},
  we have for some $c<\infty$
\begin{gather}
  \label{eq:varianzaQ1}
\limsup_{t\to\infty}\mathbb P_{\pi_\rho}(|Q_x(t)-\mathbb E_{\pi_\rho}(Q_x(t))|\ge u\sqrt
{\log t})\le \frac c{u^2}.
\end{gather}
\end{theorem}

For instance, if $\rho=(1/3,1/3)$ (the three
types of dimers have density $1/3$, in which case  $k_1,k_2,k_3$ are all equal) one finds, evaluating
numerically the integral in \eqref{eq:speranum}, that the l.h.s. of
\eqref{eq:ciro} is $0.896...<1$, so that \eqref{eq:varianzaQ1} holds. By continuity, this
remains true in a whole neighborhood of $\rho=
(1/3,1/3)$ while, again numerically, \eqref{eq:ciro}
does not seem to be satisfied in the whole set of non-extremal slopes $\rho$. 

Let us stress once more that we believe \eqref{eq:varianzaQ1} to
hold for every non-extremal $\rho$ and to be of the optimal order
w.r.t. $t$, while we do not attach any particular
meaning to condition \eqref{eq:ciro}.

\begin{remark}
  It is possible to define alternatively the stationary drift
  as follows. 
Sample $\sigma$ from $\hat\pi_{{\rho}}$ and call as
above $\phi_t$ the vertical coordinate of the tagged bead $b_0$ at time
$t$. 
From Proposition \ref{th:palm} it is easy to deduce that the average
of $\phi_t-\phi_0$ is exactly linear in $t$, while from the definition
of the process and the fact that $|I^-_{b_0}|$ has
the same law as $|I^+_{b_0}|$ (the Gibbs measures are invariant by
reflection through the center of any given hexagonal face; this
follows e.g. from uniqueness of $\pi_\rho$ given the slope) one sees
\begin{gather}
  \label{eq:53}
  v:=\frac1t\int \hat\pi_{\rho}(d\sigma) \mathbb E_\sigma
  (\phi_t-\phi_0)=(p-q)\hat\pi_{\rho}(|I^+_{b_0}|(|I^+_{b_0}|+1)/2).
\end{gather}
It is not hard to deduce from the stationarity of $\pi_{\rho}$ that
\begin{gather}
\label{eq:alternativa}
\mathbb E_{\pi_{\rho}}(h_x(t)-h_x(0))=-t\,\rho_3\, v,  
\end{gather}
where we recall that $\rho_3$ is the density of beads (the reason for
the minus sign is that when a bead moves upward the height function
decreases).  Indeed, suppose for simplicity that $p=1,q=0$. The
l.h.s. of \eqref{eq:alternativa} equals minus the
 sum over the the edges $e$ below $x$ of the probability that there is
 a bead at $e$ at time zero and that at time $t$ is has moved at least
 $n+1$ steps up, with $n\ge0$ the number of hexagonal faces between $e$
 and $x$. By translation invariance of $\pi_\rho$, this equals
 \begin{gather}
  -\rho_3 \sum_{n\ge0}\int \hat\pi_\rho(d\sigma)\mathbb
  P_\sigma(\phi_t-\phi_0>n)= - \rho_3 \int
  \hat\pi_\rho(d\sigma)\mathbb E_\sigma(\phi_t-\phi_0)=-t \, \rho_3\,v
 \end{gather}
where we used positivity of $\phi_t-\phi_0$ in the first equality and
\eqref{eq:53} in the second.
\end{remark}

\subsection{Extension to dominos (perfect matchings of $\mathbb Z^2$)}

\label{sec:estensionedomini}

Our result extends to perfect matchings of $\mathbb Z^2$, or equivalently domino
tilings of the plane, cf. Fig. \ref{fig:domini}: also in this case, one can define an asymmetric
Markov dynamics (the height function has a non-zero drift) that leaves
the Gibbs states invariant. We give only a sketchy description of the
generalization, omitting those details that are identical to the case
of the honeycomb lattice. 

\begin{figure}[h]
  \includegraphics[width=8cm]{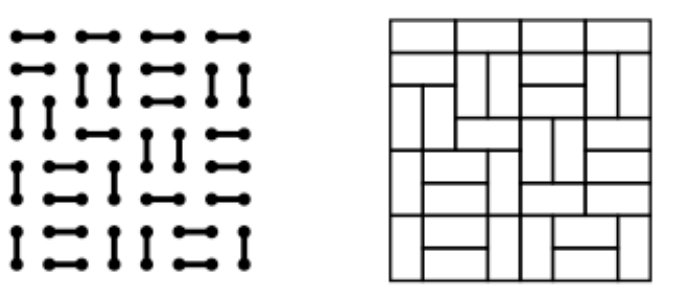}
\caption{The correspondence between dimer covering (perfect matching)
  of $\mathbb Z^2$ and domino tiling of the plane. }
\label{fig:domini}
\end{figure}

Since $\mathbb Z^2$ is bipartite we can color its vertices black/white
with the rule that each vertex has neighbors only of the opposite
color.
%As above, we call
%$\Omega_{\mathbb Z^2}$ the set of dimer coverings of $\mathbb Z^2$.
The height function $h$ on the set of faces of $\mathbb Z^2$ can be
defined (modulo an arbitrary additive constant) as follows: for each
$x,y$ choose any nearest-neighbor path $C_{x\to y}$
from $x$ to $y$ and set
\begin{gather}
  h_y-h_x=\sum_{e\in C_{x\to y}}\sigma_e (\delta_e-1/4)
\end{gather}
with the sum running over the edges crossed by the path, $\sigma_e=\pm1$ according to whether $C_{x\to y}$ crosses $e$
with the white vertex on the left/right and $\delta_e$ the indicator
function that there is a dimer at $e$. The definition is independent of
the choice of path.

The classification of translation-invariant ergodic Gibbs states is
analogue to the honeycomb lattice case (actually the structure is the
same for all planar, periodic, infinite bipartite graphs \cite{KOS}):
there exists an open polygon $P \subset\mathbb R^2$ (for the lattice
$\mathbb Z^2$ it is a square, while for $\mathcal H$ it is a triangle,
as discussed in Section \ref{sec:altezza}) 
such that for every $\rho=(\rho_1,\rho_2)\in P$
(non-extremal slope) there exists a
 unique translation-invariant ergodic Gibbs state $\pi_\rho$
satisfying
\begin{gather}
  \pi_{\rho}(h_{x+\vec e_i}-h_{x})=\rho_i, \; i=1,2,
\end{gather}
where the vectors $\vec e_i$ are as in Figure \ref{fig:threads}.
The determinantal representation \eqref{eq:multidimero} still holds,
with a different polynomial $P(z,w)$ that however still has two simple
zeros on the torus $\mathbb T$.
\medskip
In order to define the irreversible dynamics that leaves the Gibbs
states invariant, we have to find an analogue of the ``columns'' and 
``beads''. This is inspired by \cite{cf:LRS,cf:LT}.
The set of square faces of $\mathbb Z^2$ is sub-divided into infinite
``columns'' (indexed by $\ell\in \mathbb Z$), i.e. diagonally oriented
zig-zag paths, see Figure \ref{fig:threads}. 
Dimers that occupy an edge across a column are called ``beads''. 
Each column is oriented along the positive $\vec e_1$ direction, so it
makes sense to say that a bead $b_1$ in column $\ell$ is above a bead
$b_2$ in the same column.

\begin{figure}[h]
  \includegraphics[width=8cm]{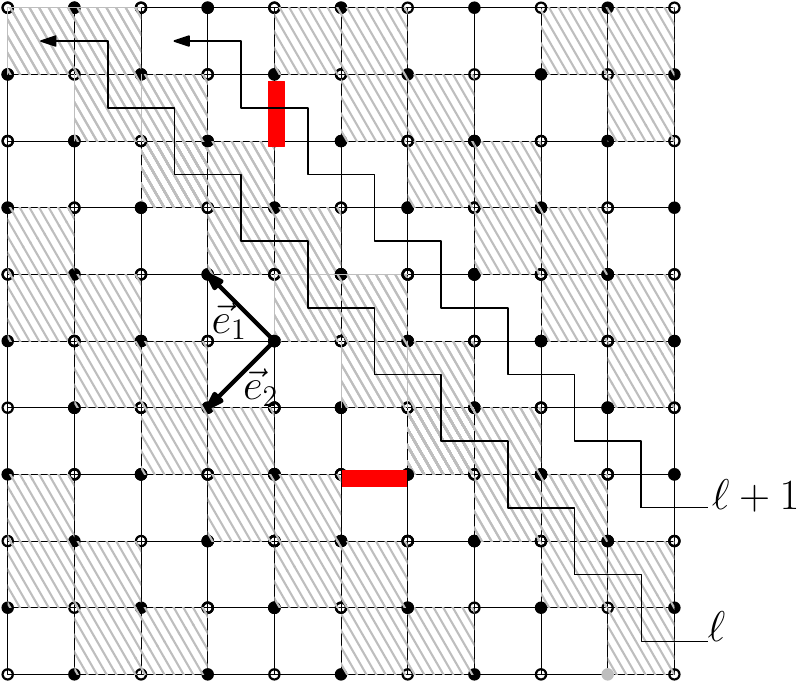}
\caption{A portion of $\mathbb Z^2$ with vertices colored
  black/white. The colored graph is periodic, i.e. invariant
  w.r.t. translations by $n_1\vec e_1+n_2\vec e_2, n_1,n_2\in\mathbb
  N$. The white and shaded zig-zag regions are the ``columns'', indexed by
  $\ell$. Columns are oriented in the positive $\vec e_1$
  direction. The two thick bonds, transversal to columns, represent
  two beads.  }
\label{fig:threads}
\end{figure}

Given columns $\ell,\ell+1$, call $Y_{\ell}$ the
set of vertices of $\mathbb Z^2$ shared by the two columns and 
order the sites of $Y_{\ell}$ according their $\vec e_1$ coordinate. Then, a bead
$b$ on column $\ell$ is said to be higher than a bead $b'$ on $\ell+1$
if the vertex of $b$ on $Y_\ell$ is higher than the vertex of $b'$ on $Y_\ell$.
With this definition, it is easy to see that 
beads satisfy the same interlacement property as on the honeycomb
graph: given beads $b_1,b_2$ on $\ell$, there exists $b_3$ on $\ell-1$
and $b_4$ on $\ell+1$ with $b_1<b_3<b_2$ and $b_1<b_4<b_2$.
Also, like on the  honeycomb lattice, it is easy to see that if there 
is at least a bead in each column, then it is possible to reconstruct
the whole dimer covering knowing only the bead positions.

\medskip

The dynamics is then defined as follows. Assign to any possible bead
position, i.e. to each edge that is transversal to some column, two
independent Poisson clocks of rates $p$ and $q$, as before. All clocks
are independent. When a $p$-clock (resp. $q$-clock) at edge $e$ of column
$\ell$ rings, if there is a bead at $e$ then do nothing. Otherwise,
move the first bead below (resp. above) $e$ in column $\ell$ to
position $e$, provided this does not violate the interlacing
constraints. Note that the dynamics is the same as on the honeycomb
lattice, only the definition of ``column'' and ``bead'' being
lattice-dependent. Observe also that each move can be seen as a
concatenation of elementary moves on $n$ adjacent faces along the same
column, each elementary move consisting in the rotation by $\pi/2$ of
two dimers on the same face of $\mathbb Z^2$
(Fig. \ref{fig:flipdomino}).
\begin{figure}[h]
  \includegraphics[width=4cm]{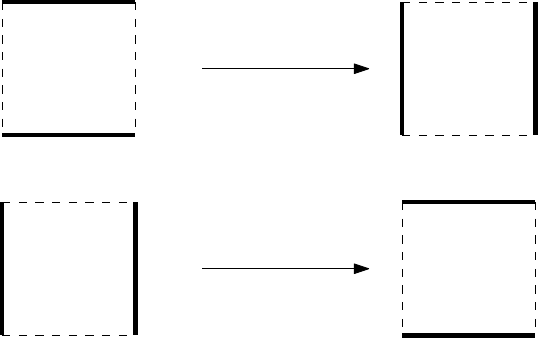}
\caption{The two elementary moves of the domino dynamics }
\label{fig:flipdomino}
\end{figure}
In fact, the effect of an elementary move is
to shift a single bead one position up or down along its column. Note that,
like in the case of the hexagonal lattice,
when a bead moves one step upward crossing a face $f$, the height
function at $f$ changes by $-1$.

As in Section \ref{sec:intfluctu}, given an edge $e$ transversal to some column
$\ell$, call $b^-(e)$ the highest bead in $\ell$, strictly lower than
$e$ and let $V(e,\uparrow)$ the collection of square faces of $\ell$
that $b^-(e)$ crosses when it is moved to $e$ (with
$V(e,\uparrow)=\emptyset$ if the move is not allowed). Then:
\begin{theorem}
The claim of Theorems \ref{th:losanghe} and \ref{th:vvar} hold also
for the bead dynamics on dimer coverings of $\mathbb Z^2$. \end{theorem}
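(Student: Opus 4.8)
The plan is to observe that the proofs of Theorems~\ref{th:losanghe} and~\ref{th:vvar} given for the honeycomb lattice in Sections~\ref{sec:well-d}, \ref{sec:parte2} and~\ref{sec:velfluct} use nothing about $\mathcal H$ beyond a short list of structural facts, all of which have already been checked for $\mathbb Z^2$ in Section~\ref{sec:estensionedomini}, so that the arguments transcribe almost verbatim. The facts in question are: (i) the faces split into infinite columns indexed by $\ell\in\mathbb Z$, each carrying a linearly ordered set of admissible bead positions indexed by $\mathbb Z$, beads on neighbouring columns are interlaced, and if every column contains a bead the dimer covering is a deterministic function of the bead configuration; (ii) one elementary move shifts a single bead one step up or down along its column, changing the height by $\mp 1$ at the crossed face, a generic move is a concatenation of $k\ge1$ elementary moves on adjacent faces of one column, and the cut-off dynamics $z_t(\ell,n;K)$ is monotone in $(K_p,K_q)$ so that the limit \eqref{eq:37} exists in $[-\infty,+\infty]$; (iii) $\pi_\rho$ has the determinantal form \eqref{eq:multidimero} and $K^{-1}$ obeys the decay bound \eqref{eq:64}. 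Point (iii) is supplied for $\mathbb Z^2$ by \cite{KOS}: for every non-extremal slope the characteristic polynomial $P(z,w)$ (the admissible slopes now forming a square rather than a triangle) still has two simple zeros on the torus $\mathbb T$, whence $K^{-1}$ decays like the inverse distance and \eqref{eq:64} holds with $C(\rho)<\infty$, locally bounded on the slope polygon.

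Granting (i)--(iii), I would first redo the well-posedness part (the analogue of Section~\ref{sec:well-d}). Its two ingredients survive unchanged: restricted to one column with the interlacing constraints frozen, the bead motion is a one-dimensional discrete Hammersley process, so Sepp\"al\"ainen's construction and his a priori control of inter-particle spacings \cite{Seppa} apply; and under $\pi_\rho$ the initial bead spacings along any column have at most logarithmic fluctuations, which is extracted from \eqref{eq:multidimero} and \eqref{eq:64} exactly as on $\mathcal H$, since the only quantities involved are correlations of the indicator that a transversal edge is occupied, all written through $K$ and $K^{-1}$. This yields that each bead displacement is a.s.\ finite for every $t$ and that the order of the limits in \eqref{eq:37} does not matter.

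Next I would treat invariance by the same two steps as in Section~\ref{sec:parte2}: on the torus of side $L$ stationarity is the elementary computation of Section~\ref{sec:sultoro}, which only uses the interlacing structure and the uniformity of the finite-volume measure at fixed slope; then one couples the torus and infinite-volume dynamics on a large box over a finite time horizon, the coupling succeeding with high probability precisely thanks to the spacing tightness established in the previous step. Finally Theorem~\ref{th:vvar} would be reproven as in Section~\ref{sec:velfluct}: \eqref{eq:Qx}--\eqref{eq:J} follow from stationarity and the definition of $Q_x(t)$, and the sub-polynomial bound \eqref{eq:varianzaQ} from the same variance estimate via a martingale decomposition whose terms are controlled through \eqref{eq:multidimero} and \eqref{eq:64}.

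The only genuinely lattice-dependent input, and hence the one point that needs checking rather than copying, is (iii): one must know that the $\mathbb Z^2$ Kasteleyn polynomial has two simple zeros on $\mathbb T$ for \emph{every} non-extremal slope, so that the decay constant $C(\rho)$ in \eqref{eq:64} is finite and locally bounded. This is part of the Kenyon--Okounkov--Sheffield classification \cite{KOS}; once it is in place, the honeycomb proofs of Theorems~\ref{th:losanghe} and~\ref{th:vvar} carry over line by line, since everything they ever use about the honeycomb geometry is that two-simple-zeros property. I do not expect any serious obstacle here: the work is essentially bookkeeping, the only mild care being required in re-deriving the $\mathbb Z^2$ analogues of the correlation and tightness estimates for bead spacings under $\pi_\rho$.
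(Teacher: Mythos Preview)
Your proposal is correct and matches the paper's own treatment, which is equally terse: the paper simply asserts that once stationarity on the torus is established for $\mathbb Z^2$ (this is done in Section~\ref{sec:toroquadrato}), the extension to the infinite lattice and the proof of Theorem~\ref{th:vvar} work exactly as for $\mathcal H$. One small point of emphasis: you describe the torus stationarity as ``the elementary computation of Section~\ref{sec:sultoro}, which only uses the interlacing structure,'' but note that the combinatorial verification of $|\Omega_\sigma|=|\Omega_\sigma^{(-1)}|$ is genuinely different on $\mathbb Z^2$ (the contribution from neighbouring columns is $\pm2$ rather than $\pm1$, and only one of the two neighbouring columns contributes), which is why the paper devotes a separate subsection to it; this is already done in the paper, so it is not a gap in your argument, but it is the one place where ``copying'' does not suffice.
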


With the exception of Section \ref{sec:toroquadrato}, in the rest of
the work we will always consider the case of the hexagonal lattice.

\section{Dynamics on the torus}
\label{sec:sultoro}
\subsection{Honeycomb lattice}
We will let the torus $\mathbb T_L$ denote the hexagonal graph $\mathcal H$, periodized (with
period $L$) along directions $\vec e_2,\vec e_3$ and we assume that
$L\ge 3$.  
Note that now columns $\ell $ along which beads move are $L$
``circles'' containing  $L$ hexagonal faces. We will say as before that a
bead moves ``upward'' or ``downward'', but what we mean is that it
moves in the positive or negative $\vec e_3$ direction around the torus.

Let $N^L_{{\rho}}$ be the
set of configurations such that  the height changes by $\lfloor \rho_2
L\rfloor$ (resp. $\lfloor L\rho_3\rfloor -L$)
 along
any closed path 
winding once in the positive $\vec e_2$ (resp. $\vec e_3$) direction.
 On each column $\ell$ there are
$\lfloor \rho_3 L\rfloor$ beads and bead positions on
neighboring columns are again interlaced.
 We denote $\pi^L_{{\rho}}$ the
 uniform measure over $N^L_{{\rho}}$.
It is known that $\pi^L_{{\rho}}$ converges weakly to
$\pi_{{\rho}}$, if the configuration space is equipped with the
product topology \cite{KOS}. Essentially, averages of bounded
local functions converge.

\medskip

On $\mathbb T_L$ the process is defined similarly as in Section
\ref{sec:dyna} for the infinite graph.  For instance, when a $p$-clock at an
edge $e$ rings, one moves to $e$ the first bead that is
found when proceeding in the $-\vec e_3$ direction from $e$ along the same column, unless this move is
forbidden by 
the interlacing constraint.
The process is ergodic on $N^L_{{\rho}}$, actually it is known that  we
  can go from any configuration to any other by positive-rate elementary moves as in
  Fig. \ref{fig:flip} (see \cite[Lemma 1]{CT} for details).
\begin{proposition}
\label{th:invarianzatoro}
  The measures  $\pi^L_{{\rho}}$  % and $\mu^L_{\rho}$
  are 
  stationary.
\end{proposition}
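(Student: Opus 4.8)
The process on the torus $\mathbb T_L$ is a continuous-time Markov chain on the \emph{finite} set $N^L_{\rho}$, and it is irreducible (this is the ergodicity recalled just above), so it has a unique stationary measure; to identify it with $\pi^L_{\rho}$ it suffices to show that its generator $\mathcal L$ is ``doubly stochastic'', that is, for every $\tau\in N^L_{\rho}$ the total rate of the transitions \emph{entering} $\tau$ equals the total rate of those \emph{leaving} $\tau$. For $p=q$ this is immediate --- the move ``raise a bead $b$ to its $i$-th free position above'' and its reverse carry the same rate, so $\pi^L_{\rho}$ is even reversible --- and the whole point is the irreversible case $p\neq q$.

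The plan is to make both rates explicit, in the notation of Definition \ref{def:I+-}. A $p$-clock at a site $(\ell,z)$ produces a genuine transition precisely when $z$ is empty and the highest bead below $z$ in column $\ell$ can be pushed up to $z$; for a fixed bead $b$ the positions $z$ arising this way are exactly the elements of $I^+_b(\tau)$, and conversely each such $z$ is attached to a single bead, so the total $p$-rate out of $\tau$ equals $p\sum_b|I^+_b(\tau)|$ (the sum over the beads of $\tau$), and likewise the total $q$-rate out of $\tau$ equals $q\sum_b|I^-_b(\tau)|$. For the incoming rate, a $p$-transition arriving at $\tau$ must ``lower some bead $b$ of $\tau$, sitting at $z$, to a position $y<z$'', the source $\sigma$ differing from $\tau$ only by $b$ being at $y$; since the neighbouring columns are untouched the move $y\to z$ is legal in $\sigma$ exactly when its reverse $z\to y$ is legal in $\tau$, i.e. when $y\in I^-_b(\tau)$, so the incoming $p$-rate is $p\sum_b|I^-_b(\tau)|$, and symmetrically the incoming $q$-rate is $q\sum_b|I^+_b(\tau)|$. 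The balance at $\tau$ is therefore equivalent to
\[
(p-q)\,\Big(\textstyle\sum_b|I^+_b(\tau)|-\sum_b|I^-_b(\tau)|\Big)=0 ,
\]
so, for $p\neq q$, Proposition \ref{th:invarianzatoro} reduces to the identity $\sum_b|I^+_b(\tau)|=\sum_b|I^-_b(\tau)|$ for every $\tau\in N^L_{\rho}$. This is the only non-routine point; it is a conservation (``gradient'') law on $\mathbb T_L$, and it is exactly what keeps the equilibrium measure stationary once reversibility is broken.

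To prove it I would use the stepped-surface picture and match each bead $b$ with the column $(x,y)$ of cubes whose topmost face is $b$. Under this bijection $|I^+_b|=\Delta^+_{x,y}$ and $|I^-_b|=\Delta^-_{x,y}$ for that column (both count the legal ways of adding cubes to it, equivalently of sliding $b$ up, resp.\ removing cubes and sliding $b$ down), where $\Delta^+_{x,y}=\min(h_{x-1,y},h_{x,y-1})-h_{x,y}$ and $\Delta^-_{x,y}=h_{x,y}-\max(h_{x+1,y},h_{x,y+1})$ are the quantities of the Introduction, so it suffices to show $\sum_{x,y}\Delta^+_{x,y}=\sum_{x,y}\Delta^-_{x,y}$, the sums over the finitely many columns of the torus. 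Using $\min(a,b)+\max(a,b)=a+b$ with $a=h_{x-1,y},\ b=h_{x,y-1}$ one gets, column by column,
\[
\Delta^+_{x,y}-\Delta^-_{x-1,y-1}=(h_{x-1,y}-h_{x,y})+(h_{x,y-1}-h_{x-1,y-1})=-g_{x,y}+g_{x,y-1},
\]
where $g_{x,y}:=h_{x,y}-h_{x-1,y}$. Although $h$ is only defined on the universal cover (it has the prescribed windings), each increment $g_{x,y}$, like $\Delta^{\pm}$, is a genuine function on $\mathbb T_L$, so all the sums below make sense; summing the last identity over the torus, $\sum_{x,y}\Delta^-_{x-1,y-1}=\sum_{x,y}\Delta^-_{x,y}$ and $\sum_{x,y}(-g_{x,y}+g_{x,y-1})=0$ after shifts of the summation variables, whence $\sum_{x,y}\Delta^+_{x,y}=\sum_{x,y}\Delta^-_{x,y}$. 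This is the required identity, and the Proposition follows. On the infinite lattice these sums are meaningless, which is precisely why extending the result there --- the content of Theorem \ref{th:losanghe} --- is the genuinely hard part.
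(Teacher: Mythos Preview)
Your reduction to the identity $\sum_b|I^+_b(\tau)|=\sum_b|I^-_b(\tau)|$ is exactly the paper's: it also computes $[\pi^L_\rho\mathcal L^{+,L}](\sigma)=\frac{p}{|N^L_\rho|}\big(\sum_b|I^-_b|-\sum_b|I^+_b|\big)$ and is left with the same combinatorial identity. The difference is in how that identity is established.

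The paper never invokes the stepped surface. It shows that $K_\sigma:=\sum_b(|I^-_b|-|I^+_b|)$ is invariant under every elementary move (a short case analysis on the beads $b^{\pm}_{\ell\pm1}$ neighbouring the moved bead), hence constant on the connected state space, hence zero because $\sum_\sigma[\pi^L_\rho\mathcal L^{+,L}](\sigma)=0$. Your route is a direct algebraic telescoping: $\Delta^+_{x,y}-\Delta^-_{x-1,y-1}=g_{x,y-1}-g_{x,y}$ with $g$ a height increment, summed over the finite index set of cube columns. The algebra is correct; the one point you skate over is that on $\mathbb T_L$ the bead $\leftrightarrow(x,y)$ correspondence identifies the beads with $\mathbb Z^2/\Lambda$ for a genuine rank-$2$ sublattice $\Lambda$ (the periods of $\nabla h$), so that the shifts $(x,y)\mapsto(x-1,y-1)$ and $(x,y)\mapsto(x,y-1)$ are bijections and the sums telescope --- this is routine but worth saying.

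Your argument is in a sense more explanatory: it exhibits $\Delta^+-\Delta^-$ as an exact discrete divergence, which is precisely the spirit of the ``gradient condition'' the paper discusses later in Section~\ref{sec:gradiente}. The paper's argument, by contrast, stays entirely in the bead picture and uses no height function; its payoff is that the same move-by-move invariance proof carries over verbatim to the square-lattice (domino) case in Section~\ref{sec:toroquadrato}, where the clean cube-stacking bijection is not available and your telescoping would have to be replaced by a fresh computation.
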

It is actually easy to deduce, using ergodicity of the process in each
of the sectors $N^L_\rho$, that the only stationary measures are
convex combinations of $\pi^L_\rho$.

\begin{proof}[Proof of Proposition \ref{th:invarianzatoro}]
  
Call $\mathcal L^L$ the
generator of the process. We want to check that 
\[
\pi^L_{{\rho}}\mathcal L^L=0
\]
(stationarity of $\pi^L_{{\rho}}$)% ; as for the canonical measures, the
 % result follows then immediately since
 % they are convex combinations of $\pi^L_{\rho'}$
. One can decompose the generator as $\mathcal
L^{+,L}+\mathcal L^{-,L}$ with $\mathcal
L^{+,L}$ involving only the up-jumps (related to the $p$-clocks) and $\mathcal L^{-,L}$ the
down-jumps. It is sufficient to prove that
$\pi^{L}_{{\rho}}\mathcal L^{+,L}=0$, for $\mathcal L^{-,L}$ the argument
being the same.
For every $\sigma\in N^L_{{\rho}}$ we have
$\pi^L_{{\rho}}(\sigma)=1/|N^L_{{\rho}}|$. Given $\sigma \in N^L_{{\rho}}$ let
$\Omega_\sigma$ be the collection of $\sigma'\in
N^L_{{\rho}}$  that can be reached from $\sigma$ by a single non-zero
up-jump (not necessarily of length one) of a
bead and let $\Omega^{(-1)}_\sigma$ be the collection of $\sigma'\in
N^L_{{\rho}}$ from which one can reach $\sigma$ with a single non-zero up-jump of a
bead. For every $\sigma'\in \Omega^{(-1)}_\sigma$ we have $\mathcal
L^{+,L}(\sigma',\sigma)=p$, while $\mathcal
L^{+,L}(\sigma,\sigma)=-p|\Omega_\sigma|$ simply because the sum of row
elements of the generator is zero.
We see then
\begin{eqnarray}
  \label{eq:18}
 \left[ \pi^L_{{\rho}}\mathcal
  L^{+,L}\right](\sigma)=\sum_{\sigma'}\pi^L_{{\rho}}(\sigma')\mathcal
  L^{+,L}(\sigma',\sigma)=
\frac p{|N^L_{{\rho}}|}(|\Omega^{(-1)}_\sigma|-|\Omega_\sigma|).
\end{eqnarray}
We want to see that $|\Omega^{(-1)}_\sigma|=|\Omega_\sigma|$. Note
that $|\Omega_\sigma|=\sum_b |I^+_b|$ while
$|\Omega^{(-1)}_\sigma|=\sum_b |I^-_b|$,
with the sum running over beads and $I^\pm_b$ being as in Definition
\ref{def:I+-}\footnote{At the expense of being pedantic let us
  emphasize that, on the torus, the set of positions available
  ``above'' a bead means the set of positions reachable via moves in
  the $+\vec e_3$ direction.}.
We will prove that $K_\sigma:=|\Omega^{(-1)}_\sigma|-|\Omega_\sigma|$ is
independent of $\sigma$: as a consequence, it must be zero because the
sum over $\sigma$ of \eqref{eq:18} is zero.
Assume that $\sigma'$ differs from $\sigma$ only by a single elementary
up-move of some bead $b$ on some column $\ell$. Then, after the move
the only beads $b'$ that may have changed their values of $I^\pm_{b'}$ are $b$
itself and $b^\pm_{\ell\pm1}$, with $b^+_{\ell+1}$ the bead in column
$\ell+1$ that is ``just above $b$'' 
\begin{figure}[h]
  \includegraphics[width=12cm]{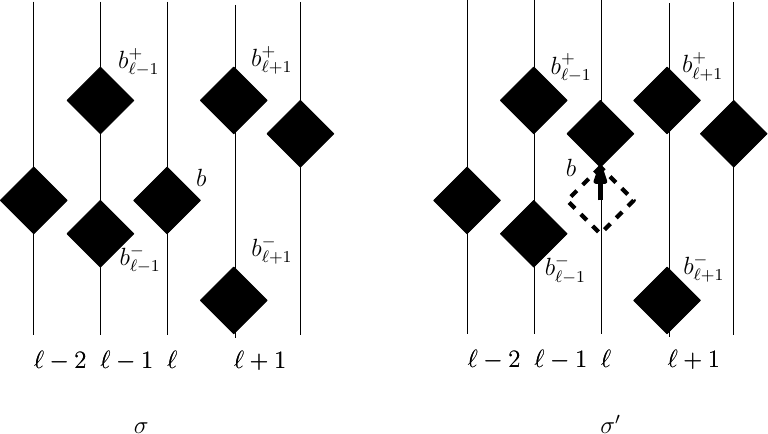}
\caption{The configurations $\sigma$ and $\sigma'$ around $b$. In this
example, once the bead $b$ moves up by $1$, $I^-_{b^+_{\ell-1}}$
decreases by $1$, $I^+_{b^-_{\ell+1}}$ increases by $1$ while
$I^-_{b^+_{\ell+1}}$ and $I^+_{b^-_{\ell-1}}$ stay constant.}
\label{fig:beads1}
\end{figure}
(see Figure \ref{fig:beads1}) and
analogously for the others.
It is clear that the contribution of $b$ to $K_{\sigma'}-K_\sigma$ is
$+2$: indeed, $|I^+_b|$ decreases by $1$ and $|I^-_b|$ increases by
$1$. Then look at column $\ell-1$. One of the following two mutually
exclusive cases occurs (Fig. \ref{fig:beads1}): either
$|I^+_{b^-_{\ell-1}}|$ increases by $1$ and $|I^-_{b^+_{\ell-1}}|$ stays
constant or $|I^+_{b^-_{\ell-1}}|$ stays constant and
$|I^-_{b^+_{\ell-1}}|$ decreases by $1$. In both
cases, the net variation of $K_{\sigma'}-K_\sigma$ from column
$\ell-1$ is $-1$. The same holds for column $\ell+1$ (since we are
assuming $L\ge 3$, columns $\ell\pm1$ are distinct). Altogether,
$K_{\sigma'}-K_\sigma=0$.
We have proved that $K_\sigma$ is unchanged if we perform an
elementary up-move. Given that the space state is connected, we proved
that $K_\sigma$ is constant (and therefore zero) on $N^L_{{\rho}}$.
\end{proof}

The analog of Proposition \ref{th:palm} for the dynamics on the torus
is the following.
\begin{proposition}
\label{th:palmtoro}
Fix a horizontal edge $e_0$ on $\mathbb T_L$, let $\sigma$ be a configuration such
that there is a bead at $e_0$ and call $\phi_t$ be the vertical
position of this bead at time $t$. 
  Let $\hat\pi^L_{\rho}$  %(resp. $\hat \mu^L_{\rho}$)
  be $\pi^L_{\rho
    }$ % (resp. $\mu^L_{\rho}$)
    conditioned to the event that there is
  a bead at $e_0$. The law $\hat\pi^L_{\rho}$ %  and $\hat \mu^L_{\rho
     % }$ are
is
    stationary for the re-centered process $\tau_{\phi_t-\phi_0} X_t$.
\end{proposition}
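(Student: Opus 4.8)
The plan is to mimic the proof of Proposition~\ref{th:invarianzatoro}, working directly with the generator of the re-centered process; this is legitimate because on $\mathbb T_L$ the state space is finite. Put $\hat N^L_{\rho}:=\{\sigma\in N^L_{\rho}:\ e_0\text{ is covered by a bead}\}$. Since $e_0$ is a single horizontal edge this bead is unique, hence canonically the tagged bead $b_0$, and by construction of the re-centering $b_0$ sits at $e_0$ in $\hat X_t$ for every $t$. The vertical translations $\tau_x$ are graph automorphisms of $\mathbb T_L$ that preserve the height-change conditions defining $N^L_{\rho}$, so $\hat X_t\in \hat N^L_{\rho}$ for all $t$; the re-centered process $(\hat X_t)_{t\ge0}$ is Markov (the standard fact that the environment seen from a tagged particle of a translation-covariant dynamics is again Markov); and $\hat\pi^L_{\rho}$ is exactly the uniform probability measure on $\hat N^L_{\rho}$. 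It therefore suffices to check $\hat\pi^L_{\rho}\hat{\mathcal L}^L=0$, where $\hat{\mathcal L}^L$ is the generator of $\hat X_t$, and since $\hat\pi^L_{\rho}$ is uniform this amounts to checking that at every $\hat\sigma\in \hat N^L_{\rho}$ the total incoming rate equals the total outgoing rate.

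As in the proof of Proposition~\ref{th:invarianzatoro} I would split $\hat{\mathcal L}^L=\hat{\mathcal L}^{+,L}+\hat{\mathcal L}^{-,L}$ according to whether the move is triggered by a $p$-clock or a $q$-clock, the two cases being symmetric. The $p$-triggered transitions of $\hat X$ are of two types: an up-move of a bead $b\neq b_0$, which acts on $\hat\sigma$ exactly as the corresponding up-move in $\mathcal L^{+,L}$; and an up-move of $b_0$ by some $k\ge1$, which in the re-centered picture becomes ``$b_0$ jumps up by $k$ and then the whole configuration is translated vertically to bring $b_0$ back to $e_0$'', i.e.\ the net effect is to move every other bead down by $k$. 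Summing rates, the total outgoing $p$-rate at $\hat\sigma$ equals $p\sum_b|I^+_b(\hat\sigma)|=p|\Omega_{\hat\sigma}|$, exactly as in~\eqref{eq:18}. For the incoming $p$-rate: the predecessors arising from an up-move of some $b\neq b_0$ are obtained from $\hat\sigma$ by moving $b$ down to any $y\in I^-_b(\hat\sigma)$, contributing $p\sum_{b\neq b_0}|I^-_b(\hat\sigma)|$; the predecessors arising from an up-move of $b_0$ by $k$ are obtained by shifting every bead of $\hat\sigma$ other than $b_0$ up by $k$ while leaving $b_0$ at $e_0$, and (using reversibility of the elementary moves together with translation-covariance of the dynamics) this yields an admissible predecessor precisely when $b_0$ can be moved down by $k$ in $\hat\sigma$, i.e.\ for $k=1,\dots,|I^-_{b_0}(\hat\sigma)|$. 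Hence the total incoming $p$-rate at $\hat\sigma$ equals $p\sum_b|I^-_b(\hat\sigma)|=p|\Omega^{(-1)}_{\hat\sigma}|$.

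Consequently the $p$-part of the balance at $\hat\sigma$ is governed by $|\Omega^{(-1)}_{\hat\sigma}|-|\Omega_{\hat\sigma}|=K_{\hat\sigma}$, and the proof of Proposition~\ref{th:invarianzatoro} established that $K_\sigma=0$ for \emph{every} $\sigma\in N^L_{\rho}$, in particular for $\hat\sigma\in \hat N^L_{\rho}$ --- so incoming rate equals outgoing rate for $\hat{\mathcal L}^{+,L}$, whence $\hat\pi^L_{\rho}\hat{\mathcal L}^{+,L}=0$; the same argument gives $\hat\pi^L_{\rho}\hat{\mathcal L}^{-,L}=0$, and adding the two proves the Proposition. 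Note that, unlike in Proposition~\ref{th:invarianzatoro}, no connectedness argument is needed here, since the pointwise identity $\sum_b|I^+_b(\sigma)|=\sum_b|I^-_b(\sigma)|$ is already available. The only step requiring genuine care is the enumeration of the incoming transitions in which the tagged bead jumped: one must check that, after a jump of $b_0$ by $k$ followed by the compensating shift, the resulting configuration is an admissible predecessor exactly for $k\le|I^-_{b_0}(\hat\sigma)|$ (this uses that $I^-_{b_0}(\hat\sigma)$ consists of the consecutive positions reachable downward from $b_0$) and that each microscopic event is counted with multiplicity one; it is precisely the identity at the heart of the toral invariance proof that then makes the two sides of the balance coincide.
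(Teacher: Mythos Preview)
Your proof is correct and follows essentially the same approach as the paper's own proof: both split the generator as $\hat{\mathcal L}^{+,L}+\hat{\mathcal L}^{-,L}$, use uniformity of $\hat\pi^L_{\rho}$ on $\hat N^L_{\rho}$, identify the outgoing and incoming $p$-rates at $\hat\sigma$ with $p\sum_b|I^+_b(\hat\sigma)|$ and $p\sum_b|I^-_b(\hat\sigma)|$ respectively, and then invoke the pointwise identity $K_\sigma=0$ already established in the proof of Proposition~\ref{th:invarianzatoro}. Your write-up is in fact somewhat more explicit than the paper's about why the tagged-bead transitions contribute exactly $|I^+_{b_0}|$ outgoing and $|I^-_{b_0}|$ incoming (the paper asserts this without spelling out the re-centering bijection), and your closing observation that no new connectedness argument is needed is apt.
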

\begin{proof}
   % We prove the statement for $\hat \pi^L_{\rho}$: the other claim
   % follows because  $\hat
   % \mu^L_{\rho}$ is a convex combination of $\{\hat \pi^L_{\rho'}\}_{\rho'}$. 
  The
  proof is very similar to that of Proposition
  \ref{th:invarianzatoro}. Call $\hat {\mathcal L}^{+,L}$ the part of the
generator of the process involving only $p$-clocks. We have to show for every $\sigma$
  \begin{eqnarray}
    \label{eq:52}
   \hat \pi^L_{\rho}\hat {\mathcal L}^{+,L}(\sigma)=0.
  \end{eqnarray}
A symmetric argument then gives $ \hat \pi^L_{\rho}\hat {\mathcal
  L}^{-,L}(\sigma)=0$.

The measure $\hat \pi^L_{\rho}$ is uniform among the $|\hat N^L_{\rho
  }|$ configurations with a bead at $e_0$. 
We have $\hat {\mathcal L}^{+,L}(\sigma,\sigma)$ equal $-p$ times the
number of configurations $\sigma'$ different from $\sigma$ that can be
reached from $\sigma$
with a single move. The configuration can change either because a bead
different from $b_0$ (the bead that is at $e_0$) moves, or because
$b_0$ itself moves and then the dimer configuration has to be
re-centered around the new tagged particle position.  Note indeed that, when $b_0$ moves, necessarily the configuration
viewed from it changes, since the distance from the first bead above
it decreases. The number of reachable configurations is then
$\sum_{b\ne b_0} |I^+_b|+|I^+_{b_0}|=\sum_{b} |I^+_b|$. 
Similarly, one sees that 
\[
\sum_{\sigma'\ne \sigma}\hat \pi^L_{\rho}(\sigma')\hat {\mathcal
  L}^{+,L}(\sigma',\sigma)=\frac p{|\hat N^L_{\rho}|}\left(\sum_{b\ne
    b_0}|I^-_b|+|I^-_{b_0}|\right)=\frac p{|\hat N^L_{\rho}|}\sum_{b}|I^-_b|.
\]
Then, the l.h.s. of \eqref{eq:52} equals the r.h.s. of  
\eqref{eq:18} (only with $1/|N^L_{\rho}|$ replaced by $1/|\hat N^L_{\rho}|$), that we know to be zero. 
\end{proof}

\subsubsection{A ``gradient condition''}
\label{sec:gradiente}
The bead dynamics on the torus has an trivial conserved quantity: the number of
particles. There is however a less obvious  one. 
For each of the $L$ columns $\ell=1,\dots,L$ define
\begin{eqnarray}
  X^{(\ell)}=\sum_{n}(|I^+_{n,\ell}|-|I^-_{n,\ell}|),
\end{eqnarray}
with the sum running over the beads of column $\ell$.
We have seen in the proof of Proposition \ref{th:invarianzatoro} that the ``total charge'' $X=\sum_\ell X^{(\ell)}$ is
exactly zero. 
% \begin{remark}
% As apparent from Fig \ref{fig:flip}, the number of dimers of the three
% orientations is conserved by the dynamics. While the conservation of
% the number of beads corresponds to conservation of the number of
% horizontal dimers, one can check that conservation of $X$ corresponds to the fact that
% the number of say north-east oriented dimers is constant. 
% \end{remark}
A simple computation shows that, when $p=q$, the instantaneous
drift of $X^{(\ell)}$ is
\begin{eqnarray}
  \lim_{\delta\to0}\frac1\delta\mathbb
  E(X^{(\ell)}(\sigma_{t+\delta})-X^{(\ell)}(\sigma_{t})|\sigma_s,s\le
  t)=(Z^{(\ell)}-Z^{(\ell-1)})(\sigma_t)-(
Z^{(\ell+1)}-Z^{(\ell)})(\sigma_t)
\end{eqnarray}
with 
\begin{eqnarray}
  Z^{(\ell)}=-\frac p2\sum_{n}(|I^+_{n,\ell}|(|I^+_{n,\ell}|+1))+\frac p2\sum_{n}(|I^-_{n,\ell}|(|I^-_{n,\ell}|+1)).
\end{eqnarray}
This is a ``gradient condition'' \cite{Spohn}: the derivative of the
charge at $\ell$
is given by the divergence of a current, here $Z^{(\ell)}-Z^{(\ell-1)}$,
which is itself the gradient of a function $Y$ of the configuration.

As we mentioned in the introduction, conditions of this type are
typically the key to guarantee that a reversible Gibbs measure
remains invariant once an external driving field that breaks reversibility is introduced, see
e.g. \cite[Sec. 2.5]{Bertini} and \cite{KLS}. 
The unusual fact here (with respect to the more standard framework of
e.g. the simple exclusion or zero range processes) is that the current
associated to the local charge
$X_{n,\ell}:=(|I^+_{n,\ell}|-|I^-_{n,\ell}|)$ does not seem to satisfy
a gradient condition, while that of the non-local charge $X^{(\ell)}$ (integrated along the
columns)
does.
% The fact that the conserved quantity whose
% current satisfies the gradient condition is non-local is, we believe,  related to the
% fact that it is not trivial to deduce stationarity of $\pi_\rho$ on
% the infinite lattice (Theorem \ref{th:losanghe}) from stationarity on
% the torus (Theorem \ref{th:invarianzatoro}). 
Note that on the infinite
lattice $X^{(\ell)}$ is not well-defined (it is just
infinite). 

\subsection{Square lattice}
\label{sec:toroquadrato}
The finite graph $\mathbb T_L$ with periodic boundary conditions is
defined like for the honeycomb lattice, except that the directions
along which one periodizes are now $\vec e_1,\vec e_2$, see 
Fig. \ref{fig:threads}. Note that each periodized column is a
``circle'' containing $2L$ square faces.
The measure $\pi^L_\rho$ is defined as the uniform measure over
dimer coverings of $\mathbb T_L$ such that the height changes by
$\lfloor L\rho_i\rfloor$ when winding once in the $\vec e_i$
direction, and $\pi^L_\rho(f)$ tends to $\pi_\rho(f)$ as $L\to\infty$
for every local observable $f$ \cite{KOS}.

Like for the honeycomb lattice, one has
\begin{proposition}
  The measure $\pi^L_\rho$ is stationary.
\end{proposition}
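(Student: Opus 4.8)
The plan is to mimic verbatim the proof of Proposition \ref{th:invarianzatoro}, replacing the honeycomb-specific combinatorics by the square-lattice notions of ``column'' and ``bead'' introduced in Section \ref{sec:estensionedomini}. The only lattice-dependent inputs in that proof are: (i) the uniformity of $\pi^L_\rho$ on the set $N^L_\rho$ of dimer coverings with prescribed winding (which holds for $\mathbb Z^2$ by \cite{KOS}); (ii) the ergodicity of the dynamics on $N^L_\rho$ via positive-rate elementary moves (rotations of two dominoes on a common square face, Fig. \ref{fig:flipdomino}); and (iii) the combinatorial identity that a single elementary up-move of a bead $b$ on column $\ell$ changes $K_\sigma:=|\Omega^{(-1)}_\sigma|-|\Omega_\sigma|=\sum_b(|I^-_b|-|I^+_b|)$ by zero. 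Since each of these survives on the square lattice, the proof goes through essentially unchanged.

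Concretely, I would first decompose $\mathcal L^L=\mathcal L^{+,L}+\mathcal L^{-,L}$ into up-jumps and down-jumps and, as before, it suffices to show $\pi^L_\rho\mathcal L^{+,L}=0$. Writing $\Omega_\sigma$ (resp. $\Omega^{(-1)}_\sigma$) for the configurations reachable from $\sigma$ (resp. from which $\sigma$ is reachable) by a single non-zero up-jump, one gets exactly Eq. \eqref{eq:18} with $|N^L_\rho|$ the number of square-lattice configurations in the sector, so the claim reduces to $|\Omega_\sigma|=|\Omega^{(-1)}_\sigma|$, i.e. $\sum_b|I^+_b|=\sum_b|I^-_b|$. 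I would then argue, exactly as in the honeycomb case, that $K_\sigma$ is invariant under elementary up-moves: an elementary move of $b$ on column $\ell$ shifts $b$ one step along $\ell$, hence contributes $+2$ to $K_{\sigma'}-K_\sigma$ (the set $I^+_b$ shrinks by $1$, $I^-_b$ grows by $1$), and for each of the two neighboring columns $\ell\pm1$ exactly one of ``$|I^+_{b^-_{\ell\pm1}}|$ grows by $1$'' or ``$|I^-_{b^+_{\ell\pm1}}|$ shrinks by $1$'' occurs, contributing $-1$ each; no other bead's $I^\pm$ is affected, by the interlacement structure. The total is $0$. Since the state space is connected by such moves, $K_\sigma$ is constant, and summing \eqref{eq:18} over $\sigma$ forces this constant to be $0$.

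The one genuine point to verify is step (iii): one must check that on the square lattice, after an elementary domino flip on a face $f$ of column $\ell$, the \emph{only} beads whose available-position sets $I^\pm$ change are $b$ itself and the beads $b^\pm_{\ell\pm1}$ immediately above/below it in the adjacent columns, and that the local tally of changes is exactly the one used above. This is where the square-lattice definitions (the zig-zag columns of Fig. \ref{fig:threads}, the ordering of beads via the shared-vertex set $Y_\ell$, and the interlacement property $b_1<b_3<b_2$, $b_1<b_4<b_2$ proved in Section \ref{sec:estensionedomini}) do the work; the picture is the direct analog of Fig. \ref{fig:beads1}, now with dominoes. I expect this bookkeeping — making sure the ``one of two mutually exclusive cases'' dichotomy for columns $\ell\pm1$ is literally the same, and that the zig-zag geometry introduces no extra affected beads — to be the main (though still routine) obstacle; everything else is a transcription of the proof of Proposition \ref{th:invarianzatoro}. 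Finally, as on the honeycomb torus, ergodicity of the dynamics in each sector $N^L_\rho$ gives that the only stationary measures are convex combinations of the $\pi^L_\rho$.
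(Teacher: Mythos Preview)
Your overall strategy is exactly the paper's: reduce to $\sum_b|I^+_b|=\sum_b|I^-_b|$ via \eqref{eq:18} and then show $K_\sigma$ is invariant under elementary moves. However, the specific bookkeeping you propose for step (iii) is wrong on the square lattice, and this is precisely the point the paper flags as the only non-trivial difference.

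On $\mathbb Z^2$, an elementary move of a bead $b$ on column $\ell$ takes $b$ from an edge $e$ to an adjacent edge $e'$ sharing a common vertex, and that vertex lies in exactly one of $Y_\ell$ or $Y_{\ell-1}$. Say it lies in $Y_\ell$. Then the move does \emph{not} change $|I^\pm_{b'}|$ for any bead $b'$ in column $\ell+1$: the zig-zag geometry makes the two neighbouring columns play asymmetric roles, contrary to your expectation that ``each of the two neighboring columns $\ell\pm1$'' contributes $-1$. The entire compensating contribution of $-2$ must therefore come from column $\ell-1$ alone. Moreover, it is not a simple dichotomy: one has to distinguish three cases according to the position of the bead $b'$ in column $\ell-2$ that lies between $b^+_{\ell-1}$ and $b^-_{\ell-1}$. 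In two of the cases one of $|I^+_{b^-_{\ell-1}}|$ or $|I^-_{b^+_{\ell-1}}|$ changes by $2$ (not $1$) while the other is unchanged; in the remaining case each changes by $1$. In every case the net effect on $K_\sigma$ from column $\ell-1$ is $-2$, cancelling the $+2$ from $b$. So the conclusion you want does hold, but the mechanism is genuinely different from Fig.~\ref{fig:beads1}, and your stated ``mutually exclusive cases'' argument, transplanted verbatim, would fail.
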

\begin{proof}
The only point where the proof differs w.r.t. the honeycomb lattice case is
the way one shows that $|\Omega_\sigma|:=\sum_b|I^+_b|=\sum_b|I^-_b|=:|\Omega_\sigma^{(-1)}|$, as after 
\eqref{eq:18}. Recall that it is sufficient to show that, after any
elementary move, the difference
$|\Omega_\sigma|-|\Omega_\sigma^{(-1)}|$ is unchanged, whatever the
initial configuration $\sigma$ is.

When an elementary move is performed at a face $f$ in column $\ell$, a
bead $b$ jumps from
an edge $e$ to $e'$ that has a common vertex with $e$. This common
vertex belongs to either $Y_\ell$ or $Y_{\ell-1}$ (recall that $Y_\ell$
is the set of vertices common to columns $\ell,\ell+1$). Assume
w.l.o.g. that the former is the case, as in Figure \ref{fig:millecasi}, and
that $e'$ is higher than $e$ in column $\ell$.
After the move, $|I^+_b|$ decreases by $1$ and $|I^-_b|$ increases by
$1$. On the other hand, it is clear that $|I^\pm_{b'}|$ is unchanged for
beads $b'$ on column $\ell+1$, or on any other column except $\ell-1$. Therefore, we have to find a change $+2$ of
$|\Omega_\sigma|-|\Omega_\sigma^{(-1)}|$ coming from column $\ell-1$.
Call $b^+$, resp. $b^-$, the first bead above (resp. below) $b$ in
column $\ell-1$, and call $b'$ the bead ``between'' $b^+$ and $b^-$ in
column $\ell-2$. (The notion of ordering for beads in neighboring
columns was introduced in Section \ref{sec:estensionedomini}). Then, with reference to Fig. \ref{fig:millecasi},
note that: 
\begin{figure}[h]
  \includegraphics[width=8cm]{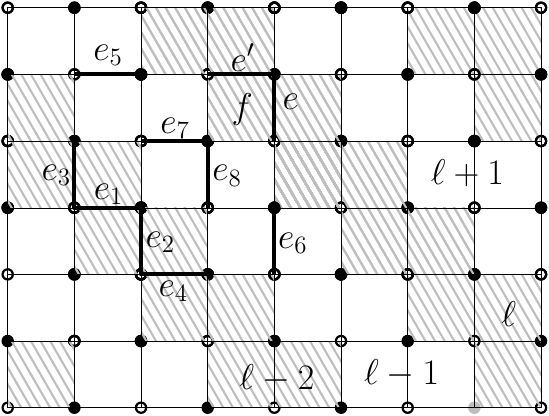}
\caption{}
\label{fig:millecasi}
\end{figure}
\begin{itemize}
\item if $b'$ is at or higher than edge $e_3$, then $b^+$ is at or
  higher than $e_5$ and  $I^-_{b^+}$ is
  the same, irrespectively of whether $b$ is at $e$ or $e'$. On the
  other hand, edges $e_7,e_8$ are accessible to $b^-$ if $b$ is at
  $e'$ and are not if $b^-$ is at $e$, so $|I^+_{b^-}|$ differs by $2$
  in the two cases. Altogether, when $b$ is moved
  from $e$ to $e'$, the contribution of $b^-$ to the change of
  $|\Omega_\sigma|-|\Omega_\sigma^{(-1)}|$ is $+2$, as desired;
\item symmetrically, when $b'$ is at or lower than $e_4$ then $b^-$ is
  at or lower than $e_6$. When $b$ is moved from $e$ to $e'$,
  $I^+_{b^-}$ does not change, while $|I^-_{b^+}|$ decreases by $2$,
  since $e_7,e_8$ are not available positions any more. Again, we get a
  change $+2$ for   $|\Omega_\sigma|-|\Omega_\sigma^{(-1)}|$, this
  time coming from $b^+$.
\item finally, suppose that $b'$ is at $e_1$ or $e_2$. If $b$ is at
  $e$ then position $e_7$ is available for $b^+$ and $e_8$ is not available for
  $b^-$, while if $b$ is at $e'$ the opposite holds. As a consequence,
  both $b^+$ and $b^-$ contribute $+1$ to the change of $|\Omega_\sigma|-|\Omega_\sigma^{(-1)}|$.
\end{itemize}
  \end{proof}

Deducing stationarity of $\pi_\rho$ on the infinite
graph from stationarity of $\pi^L_\rho$ on the torus works exactly the same on
$\mathcal H$ or $\mathbb Z^2$; for definiteness, in Section \ref{sec:stazionarieta} we
will stick to the former case.

\section{The discrete Hammersley dynamics (DHD)}
\label{sec:DHD}
% While the process on $\mathbb T_L$ is well defined because the state
% space is finite, on the whole plane this is not obvious, as we already
% remarked. 
On the way towards Theorem \ref{th:losanghe}, let us switch for a moment to a
one-dimensional interacting particle system known as Discrete Hammersley Dynamics (DHD)
\cite{FM}. The configuration space of the DHD consists of particle
configurations on $\mathbb Z$ (at most one particle per site). Each
site of $\mathbb Z$ has an
i.i.d. Poisson clock of rate $1$. When a clock rings at
a site $x$, if the site is occupied then nothing happens; otherwise, take the first particle to the right of $x$ and move it to $x$. Note
that each particle moves to the left with rate equal to the number $n$
of
empty sites before the next particle to the left, and the new position
is uniform among the $n$ sites. We call 
 $z_t(n)$ the
position of the $n^{th}$ particle ($n\in \mathbb Z$)
 at time $t$.
Particles are labelled in the initial condition in such a way that $z_0(n)<z_{0}(n+1)$, with some
arbitrary choice of whom to label $0$ (for instance, it could be the
first particle to the right of the origin). Labels do not change as
particles move.

The works \cite{AD,Seppa} consider
instead the (continuous) Hammersley process \cite{AD}, which
is defined similarly as the DHD, except that particles live on $\mathbb
R$ instead of $\mathbb Z$: again, each particle moves to the left with
rate equal to the available space before the next particle and the new
position is chosen uniformly in the available interval. In
\cite{Seppa} it is proven (among many other results):
\begin{theorem}
\label{th:DHD}
  If  $ \lim_{n\to-\infty}n^{-2}z_0(n)=0$, then the
dynamics is well defined at all times: the displacement of a particle
with respect to the initial position is almost surely finite at all
finite times.
\end{theorem}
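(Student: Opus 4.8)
\emph{Sketch of the argument (following \cite{Seppa}).} Since every particle moves only to the left, the map $t\mapsto z_t(n)$ is nonincreasing and $z_t(n)\le z_0(n)<\infty$ automatically; the entire content of the statement is the \emph{lower} bound $z_t(n)>-\infty$. The plan is: (i) build the dynamics from its graphical representation and approximate it monotonically by genuinely well-defined truncated dynamics, so that $z_t(n)$ is defined as a limit; (ii) show, via a ``cascade of pushes'' made rigorous by Sepp\"al\"ainen's variational/envelope representation of the infinite Hammersley process, that a large leftward displacement of a tagged particle forces an atypically long increasing path in the driving Poisson field; (iii) bound the probability of such a path by the elementary upper-tail estimate for longest increasing subsequences, and check that $n^{-2}z_0(n)\to0$ makes it tend to $0$; (iv) conclude by a countable union.

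For (i): realize the clocks as a unit-intensity Poisson point process $\Pi$ on $\mathbb R\times[0,\infty)$ (for the DHD, as i.i.d.\ rate-one site clocks), a point at $(x,s)$ meaning ``at time $s$ the first particle to the right of $x$ jumps to $x$''. For $R>0$, run the dynamics using only the points of $\Pi$ with spatial coordinate $\ge-R$: then no particle ever goes below $-R$, only finitely many particles and finitely many points are relevant inside $[-R,z_0(0)]$, and the truncated process $z^{[R]}$ is a well-defined finite-rate jump process. Removing points can only remove leftward pushes, so $R\mapsto z^{[R]}_t(n)$ is nonincreasing, and one defines $z_t(n):=\lim_{R\to\infty}z^{[R]}_t(n)\in[-\infty,z_0(0)]$; this is also nonincreasing in $t$, so it suffices to prove $z_t(0)>-\infty$ a.s.\ for each fixed $t$ (a general index $n$ is identical, and ``for all $t$'' then follows by monotonicity and a countable union).

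For (ii)--(iii): fix $L>0$ and set $k=k(L):=\min\{j\ge1:\ z_0(-j)<-L\}$, which is finite since $z_0(-j)\to-\infty$, and $k(L)\to\infty$ as $L\to\infty$. If particle $0$ lies below $-L$ at time $t$, then at the time of its last push it was pushed below $-L$, from a position whose right-neighbouring interval was empty of particles an instant earlier; iterating this backwards through particles $-1,-2,\dots$ — each of which, having index $<k(L)$, started above $-L$ and hence must itself have been pushed below $-L$ — produces an increasing path of points of $\Pi$, of length at least $k(L)$, confined to a space--time box of temporal extent $t$ and spatial width governed by $|z_0(-k(L))|$. The cascade stops at particle $-k(L)$, whose initial position is already below $-L$; accounting cleanly for the contribution of deeper particles when they too are in motion is precisely what Sepp\"al\"ainen's envelope formula is designed to do, and it yields $\{z_t(0)<-L\}\subseteq\{G_L\ge k(L)\}$, where $G_L$ is the maximal length of an increasing path of $\Pi$ inside a box of area $A_L\le t\,(|z_0(0)|+|z_0(-k(L))|)$. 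Now use the elementary bound: for a unit-intensity Poisson field in a box of area $A$, $\mathbb P(G\ge\ell)\le A^\ell/(\ell!)^2\le(e^2A/\ell^2)^{\ell}$. Writing $\varepsilon(m):=\sup_{m'\ge m}(m')^{-2}|z_0(-m')|\to0$, the definition of $k(L)$ gives $L<|z_0(-k(L))|\le\varepsilon(k(L))\,k(L)^2$, hence $A_L\le 2t\,\varepsilon(k(L))\,k(L)^2$ for $L$ large, so
$\mathbb P(z_t(0)<-L)\le\mathbb P(G_L\ge k(L))\le\big(e^2A_L/k(L)^2\big)^{k(L)}\le\big(2e^2\,t\,\varepsilon(k(L))\big)^{k(L)}\to0$
as $L\to\infty$ (the base tends to $0$, the exponent to $\infty$). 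This gives $z_t(0)>-\infty$ a.s., and (iv) — unions over $n$ and over integer times — completes the proof; the DHD is handled verbatim, with the continuous longest-increasing-subsequence bound replaced by its lattice version.

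The main obstacle is step (ii): the backward cascade is intuitively compelling, but making it airtight in the infinite system — in particular controlling how far to the left the cascade can reach once the deeper particles are themselves moving, and checking that the resulting combinatorial inclusion is stable under the $R\to\infty$ monotone limit — is exactly what requires Sepp\"al\"ainen's variational representation of the Hammersley process. Everything downstream (the longest-increasing-subsequence tail bound and the bookkeeping with $\varepsilon(m)$) is routine.
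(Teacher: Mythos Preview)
Your approach is the same as the paper's: the paper does not prove this result itself but cites \cite{Seppa}, and then records exactly the two ingredients you use---the variational formula $z_t(n)=\inf_{j\le n}\{z_0(j)+\Gamma((z_0(j),0),t,n-j)\}$ and the longest-increasing-subsequence tail bound $P(\mathbf L((0,0),(h,t))\ge k)\le (th)^k/(k!)^2$---to derive the estimate \eqref{eq:25}, which is the quantitative form of the theorem.

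One imprecision is worth flagging. The single inclusion $\{z_t(0)<-L\}\subseteq\{G_L\ge k(L)\}$ with a \emph{fixed} box of area $A_L\le t(|z_0(0)|+|z_0(-k(L))|)$ is not what the variational formula delivers: the infimum may be attained at some $j<-k(L)$, and then the relevant box has width $|z_0(j)|$, not $|z_0(-k(L))|$. What the formula actually gives is a union over $j\le -k(L)$, with box area $t|z_0(j)|$ and required path length $|j|$; this is precisely the paper's sum \eqref{eq:25}. The fix is harmless for your estimate, since the ratio $t|z_0(-r)|/r^2\le t\,\varepsilon(r)\to0$ uniformly in $r\ge k(L)$, so each term of the sum is bounded by $(e^2 t\,\varepsilon(k(L)))^r$ and the series is dominated by a geometric one.
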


Theorem \ref{th:DHD} extends immediately to the
DHD \cite{FM} and is obtained with the help of a Harris-type
graphical construction, that we recall here.
 To each site of $\mathbb Z$ associate an independent
Poisson point process of density $1$ on $\mathbb R^+$: this is the set
of times when the clock at that site rings. Given a realization of all these
i.i.d. Poisson processes and given $0\le s<t$, $-\infty<a<b<\infty$,
we can consider the set of all possible up-right
paths in the rectangle $(a,b]\times(s,t]$, i.e. sequences
$(x_1,t_1),\dots,(x_n,t_n)$ of space-time points in the point process in the
rectangle, with $x_1<\dots<x_n$ and $t_1<\dots<t_n$. Note that
inequalities are strict (for times this is not restrictive since with
probability one there is at most one clock ringing at a given
time). Let as in  \cite{AD,Seppa}
${\bf L}((a,s),(b,t))$ 
be the maximal number of points of the Poisson processes on  one such path. Let also 
\[
\Gamma((a,s),t,k)=\inf\{h\ge0:{\bf L}((a,s),(a+h,t))\ge k\}.
\]
Then (this is given in \cite{AD,Seppa} in the continuous Hammersley
process where ${\bf L}$ and $\Gamma$ are defined similarly, but the same
holds true also for the DHD) for every $t\ge0$ 
\begin{eqnarray}
  \label{eq:38}
  z_t(n)=\inf_{j\le n}\{z_0(j)+\Gamma((z_0(j),0),t,n-j)\}.
\end{eqnarray}

Note that the DHD has the following
monotonicity property: 
\begin{lemma}[Monotonicity for the DHD]
\label{th:DHDmono}
If we take two initial conditions 
such that $z_0(n)\le z'_0(n)$ for every $n$ and if we let them evolve
using the same Poisson clocks, then the partial order is
preserved at all later  times.
\end{lemma}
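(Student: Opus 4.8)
The plan is to read the monotonicity off directly from the Harris-type variational formula \eqref{eq:38}, rather than trying to propagate the partial order through individual clock rings. Fix one realization of the Poisson clocks — the \emph{same} one for both initial conditions — and fix $t\ge 0$. For $a\in\mathbb Z$ and an integer $k\ge 0$ set
$G(a,k):=a+\Gamma((a,0),t,k)=\inf\{b\ge a:{\bf L}((a,0),(b,t))\ge k\}$, so that \eqref{eq:38} reads $z_t(n)=\inf_{j\le n}G(z_0(j),n-j)$ and likewise $z'_t(n)=\inf_{j\le n}G(z'_0(j),n-j)$, with the \emph{very same} function $G$ in both cases.

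The only input needed is that $a\mapsto G(a,k)$ is nondecreasing. This follows from the evident monotonicity of last-passage values: decreasing $a$ enlarges the rectangle $(a,b]\times(0,t]$, hence the family of admissible up-right paths inside it, so ${\bf L}((a,0),(b,t))$ is nonincreasing in $a$ for fixed $b,t$. Consequently, if $a\le a'$, then every $b$ with $b\ge a'$ and ${\bf L}((a',0),(b,t))\ge k$ also satisfies $b\ge a$ and ${\bf L}((a,0),(b,t))\ge{\bf L}((a',0),(b,t))\ge k$; thus the set over which the infimum defining $G(a',k)$ is taken is contained in the one defining $G(a,k)$, which gives $G(a,k)\le G(a',k)$. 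Since $z_0(j)\le z'_0(j)$ for every $j$, each term satisfies $G(z_0(j),n-j)\le G(z'_0(j),n-j)$, and taking the infimum over $j\le n$ yields $z_t(n)\le z'_t(n)$ for all $n$ and all $t\ge 0$. (Implicit here is that these infima are finite, i.e.\ that the dynamics is well defined for both initial data; this is exactly Theorem \ref{th:DHD} under the hypothesis $\lim_{n\to-\infty}n^{-2}z_0(n)=0$, and is automatic for the cut-off processes, which is the only setting where this lemma will be invoked.)

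The one place where I expect to have to be careful is resisting the ``naive'' coupling argument that follows the two configurations through successive clock rings: this is delicate because the \emph{label} of the particle displaced by a ring at $x$ need not coincide in the two configurations — e.g.\ with $z$ having particles at $\{0,2,5\}$ and $z'$ at $\{0,10,20\}$ (so that $z(n)\le z'(n)$ for every $n$), a ring at $x=3$ moves the particle labelled $2$ in $z$ but the one labelled $1$ in $z'$. One can still check by a short case analysis that the labelled order survives a single update, but keeping track of which labels shift is fiddly, whereas formula \eqref{eq:38} makes the conclusion immediate. If an intrinsic (non-graphical) proof were wanted, the alternative would be to re-express $z\le z'$ as domination of the occupation-counting function of $z$ over that of $z'$ and to observe that one update acts on it by a monotone $\min$-type operation; but this is strictly more cumbersome than quoting \eqref{eq:38}.
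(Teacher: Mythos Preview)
Your proof is correct and follows essentially the same route as the paper: both read monotonicity directly off the variational formula \eqref{eq:38} by observing that $a\mapsto a+\Gamma((a,0),t,k)$ is nondecreasing (the paper phrases this as ``if $z_0(j)$ is decreased by $a$ then $\Gamma$ increases at most by $a$''). Your additional remarks on why the naive clock-by-clock coupling is awkward are accurate but not needed.
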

\begin{proof}
  This is immediate from \eqref{eq:38}: if some $z_0(j)$ is changed to
  $z_0(j)-a, a\in \mathbb N$, then $\Gamma((z_0(j),0),t,n-j)$
  increases at most by $a$.
\end{proof}
The representation \eqref{eq:38} also allows to get an upper bound on the
probability that the displacement of a particle is large. 
Indeed, if $z_t(n)-z_0(n)\le -k$ then  there
exists $j\le n$ such that
\[
{\bf L}((z_0(j),0),(z_0(j)-k+(z_0(n)-z_0(j)),t))\ge n-j.
\]
With a union bound, the probability (conditionally on the initial
positions) that $z_t(n)-z_0(n)\le -k$ is upper bounded by
\begin{eqnarray}
  \label{eq:15}
\sum_{j< n:z_0(j)-z_0(n)\le -k} P({\bf
  L}((0,0),(0,z_0(n)-z_0(j)-k,t))\ge n-j), 
\end{eqnarray}
where $ P$ denotes the expectation only with respect to the Poisson
clocks.
One has\footnote{see also \cite[Lemma 4.1]{Seppa} that is given for
  the continuous Hammersley process}
\begin{eqnarray}
  \label{eq:28}
 P({\bf L}((0,0),(h,t))\ge k)\le  (th)^k/(k!)^2.
\end{eqnarray}
Indeed, there are $h!/(k!(h-k)!)$ strictly increasing distinct
sequences $0<x_1<\dots x_k\le h$. Given one of these, the probability
that there is an up-right path $(x_1,t_1),\dots,(x_k,t_k)$ equals the
probability that a Poisson random variable of average $t$ equals at
least $k$. 
On the other hand, 
if $X $ is a Poisson variable of
average $t$ then for $k\ge1$
\begin{eqnarray}
  \label{eq:37bis}
 P(X\ge k)=\sum_{n\ge
   k}e^{-t}\frac{t^n}{n!}
=\sum_{m\ge0}e^{-t}\frac{t^{m+k}}{(m+k)!}
% =\frac{\int_0^t
%    x^{k-1}e^{-x}dx}{(k-1)!}
  \le \frac{t^k}{k!}.
  \end{eqnarray}
because $(m+k)!\ge k! m!$.
Then, \eqref{eq:28} follows from 
\[
\frac{h!}{(h-k)!}\frac1{k!}\le \frac{h^k}{k!}.
\]
 Let us call $\mathbb P_z$ the law of the DHD
started from an initial configuration $z=\{z_0(n)\}_{n\in \mathbb Z}$. From \eqref{eq:28} we have then
\begin{eqnarray}
  \label{eq:25}
  \mathbb P_z(z_t(n)-z_0(n)\le -k)\le \sum_{j< n:z_0(j)-z_0(n)\le -k}\frac{t^{n-j}(z_0(n)-z_0(j)-k)^{n-j}}{((n-j)!)^2}.
\end{eqnarray}
This bound will be used in Section \ref{sec:propa}.

\section{The process started from $\pi_{\rho}$ is well-defined}
\label{sec:well-d}

Here we prove ``the first (and easier) half'' of Theorem \ref{th:losanghe},
i.e. the bead displacement is finite for almost every initial
condition sampled from $\pi_\rho$.

\begin{proposition}
\label{th:ex}
Suppose that the initial
configuration $\sigma$ is in the set
\begin{eqnarray}
  \label{eq:13}
  Y=\{\sigma: \text{ for every $\ell$, } \lim_{n\to\infty}n^{-2}z_0(\ell,n)=0=\lim_{n\to\infty}n^{-2}z_0(\ell,-n)\}.
\end{eqnarray}
Then the process is well-defined at all times: for every $(\ell,n)$,
almost surely $z_t(\ell,n)-z_0(\ell,n)$ as defined in \eqref{eq:37} is
finite for all $t\ge0$.
\end{proposition}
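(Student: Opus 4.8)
The plan is to deduce well-posedness from the one-dimensional result of Theorem \ref{th:DHD} via a monotone domination argument, comparing each column of the bead dynamics — separately for its upward and its downward moves — to an autonomous copy of the discrete Hammersley dynamics of Section \ref{sec:DHD}. The key structural observation is that, on a single column, the interlacing constraints with the neighbouring columns can only \emph{slow down} the motion of a bead (never speed it up), and likewise the $q$-clocks can only pull bead positions down while the $p$-clocks can only push them up.

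First fix finite cut-offs $K_p,K_q<\infty$: then only finitely many clocks are active, $X^\sigma_{t;K}$ is trivially well-defined, and $z_t(\ell,n;K)$ is a finite jump process. By the monotonicity recalled in the text, $z_t(\ell,n;K)$ is non-decreasing in $K_p$ and non-increasing in $K_q$, so the iterated limit $z_t(\ell,n)=\lim_{K_q\to\infty}\lim_{K_p\to\infty}z_t(\ell,n;K)$ exists in $\mathbb{Z}\cup\{\pm\infty\}$; it remains to sandwich it between two a.s.\ finite quantities that do not depend on $K$. For the upper bound, note $z_t(\ell,n;K_p,K_q)\le z_t(\ell,n;K_p,0)$ (turning off the $q$-clocks only raises positions), and that, ignoring the interlacing constraints, the $p$-clock dynamics on column $\ell$ is, after the reflection $z\mapsto -z$, \emph{exactly} the DHD: when a $p$-clock rings at an empty site $z$ of column $\ell$, the highest bead below $z$ is moved up to $z$, which is the DHD update of Section \ref{sec:DHD} read with ``up'' in place of ``left''. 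Since reinstating the interlacing constraint can only prevent such upward moves, a labelled monotone coupling (the same $p$-clocks for both, followed by a short case analysis on which bead moves at each ring) gives $z_t(\ell,n;K_p,0)\le\tilde z_t(\ell,n;K_p)$, where $\tilde z$ is the reflected DHD on column $\ell$ with the $K_p$ cut-off. Letting $K_p\to\infty$ and applying Theorem \ref{th:DHD} — its hypothesis for this reflected DHD is precisely $\lim_{n\to\infty}n^{-2}z_0(\ell,n)=0$, which holds because $\sigma\in Y$ — we get $\sup_K z_t(\ell,n;K)\le\tilde z_t(\ell,n)<\infty$ a.s.; moreover $\tilde z_s(\ell,n)$ is non-decreasing in $s$, so in fact $\sup_{s\le t}z_s(\ell,n;K)\le\tilde z_t(\ell,n)<\infty$.

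The lower bound is the mirror image: $z_t(\ell,n;K)\ge z_t(\ell,n;0,K_q)$, and with $q$-clocks only the column-$\ell$ evolution, ignoring interlacing, is a DHD directly (now with ``down'' in place of ``left''); the same coupling yields $z_t(\ell,n;0,K_q)\ge\hat z_t(\ell,n)$ with $\hat z$ the downward DHD on column $\ell$, which is a.s.\ finite by Theorem \ref{th:DHD} using the other condition $\lim_{n\to\infty}n^{-2}z_0(\ell,-n)=0$ from the definition of $Y$, and $\hat z_s(\ell,n)$ is non-increasing in $s$. Hence for $\sigma\in Y$ and every $(\ell,n)$ we have $\hat z_t(\ell,n)\le z_s(\ell,n;K)\le\tilde z_t(\ell,n)$ for all $s\le t$ and all $K$, with both bounds a.s.\ finite; passing to the limit in $K$, $z_s(\ell,n)$ is finite for all $s\le t$. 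Taking a countable union over $(\ell,n)$ and over $t\in\mathbb{N}$ (and using that the two bounds are monotone in $s$, so finiteness at integer times forces finiteness at all times) proves the statement for all $t\ge0$ simultaneously.

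I expect the main obstacle to be exactly the domination step: making rigorous the assertion that the interlacing constraint, combined with the presence of the opposite-direction clocks, genuinely only slows the motion of each bead, so that a single column is dominated above (resp.\ below) by an autonomous DHD run with the same $p$-clocks (resp.\ $q$-clocks). Intuitively this is clear, but the coupling must be carried out with the bead labels, because the constraint can change which bead is ``the highest one below $z$''; one therefore has to check at every clock ring that the partial order between the bead-dynamics positions and the DHD positions is preserved under all possible configurations of the local move.
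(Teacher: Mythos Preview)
Your proposal is correct and follows essentially the same approach as the paper: dominate each column of the bead dynamics above and below by an autonomous DHD on that column (coupled via the same clocks), use that the interlacing constraints and the opposite-direction clocks can only slow beads down, and invoke Theorem \ref{th:DHD} with the hypothesis supplied by $\sigma\in Y$. The paper only spells out one direction (the lower bound via the $q$-clock DHD) and leaves the symmetric one implicit, whereas you treat both; your case analysis for preserving the labelled partial order at each clock ring is exactly what the paper carries out.
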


Actually, when $q=0$ (resp. when $p=0$) the condition
$\lim_{n\to\infty}n^{-2}z_0(\ell,-n)=0$
(resp. $\lim_{n\to\infty}n^{-2}z_0(\ell,n)=0 $) is not necessary. Note also that $\pi_{{\rho}}(Y)=1$
for any non-extremal slope.
Indeed, $z_0(\ell,n)$ is just the sum of the first $n$ inter-bead distances
along  column $\ell$. Since the measure $\pi_{{\rho}}$ is ergodic
for the action of $\mathbb Z^2$, $n^{-1}z_0(\ell,n)$ converges $\pi_{{\rho}}$-almost
surely to the finite limit $1/\rho_3$.
\begin{proof}[Proof of Proposition \ref{th:ex}] Fix some column
  $\ell$. We want to prove that, say,
  $z_t(\ell,n;K)-z_0(\ell,n)$ is almost surely bounded away from minus
  infinity, uniformly in $K$ and for every $n$. Take the DHD slowed down by a factor $q$
  (i.e. its clocks ring
with rate $q$ and not $1$) with initial condition
$z_0(n)=z_0(\ell,n)$ for every $n$ and couple
the DHD and the bead dynamics by establishing that the $q$-clocks (within distance
$K_q$ from the origin)  on
column $\ell$ of the lozenge
dynamics are the same as the corresponding  clocks of the DHD (the DHD
has no $p$-clock%; note that clocks of the DHD at distance more
%than $K_q$ from the origin are not switched off
). Then, 
bead positions are 
dominated by those of the  DHD, in the sense that $z_t(n)\le z_t(\ell,n;K)$
for all times and for all $n$. In fact, call
$s_i, i\ge1$ the ordered times when one of the
finitely many  clocks in column $\ell$ of
the dynamics $X_{t;K}$ ring. We have $z_0(\cdot)\le
z_0(\ell,\cdot;K)$ (actually with equality). At time $s_1^-$ the inequality is still true, since the
beads in $\ell$ have not moved while some DHD particles may have moved
to the left. At time $s_1$, one of the following cases occurs:
\begin{itemize}
\item a $p$-clock rings. Then, a bead might move upward and nothing
  happens for the DHD. We have in this case obviously
  \begin{eqnarray}
    \label{eq:50}
z_{s_1^+}(\cdot)\le
z_{s_1^+}(\ell,\cdot;K).
  \end{eqnarray}
\item a $q$-clock rings at an edge $e$ within distance $K_q$ from the origin, but no bead can be moved to
  $e$ without pushing other beads. Again \eqref{eq:50} holds (for the
  DHD, a particle can move to the left).
%The same holds if a $q$-clock of the DHD at distance larger than $K_q$ rings.
\item a $q$-clock rings at an edge $e$ within distance $K_q$ from the origin and the bead just above it,
  call $(\ell,n)$ its label, can be moved to $e$. By assumption, for the DHD, the first
  particle at position greater or equal to $e$ has index $m\ge
  n$. After the update, for the DHD one has particle $m$ at $e$ and
  for the lozenge process one has bead $(\ell,n)$ at $e$. All other
  particles/beads are unchanged. Clearly then \eqref{eq:50} holds also
  in this case.
\end{itemize}
The argument is then repeated inductively starting from time $s_1^+$.

Since, by Theorem \ref{th:DHD}, $z_t(n)-z_0(n)>-\infty$ almost surely,
we conclude that $z_t(\ell,n;K)-z_0(\ell,n)$ is almost surely bounded
away from $-\infty$, uniformly in $K$.
\end{proof}

\section{Large gaps and propagation of information}
\label{sec:propa}
Let $B_R$ be the ball of radius $R$ centered at the origin of
$\mathcal H$. 
\begin{definition}
Let $\Delta(R,t)$ be the largest integer $n$ such that there exist horizontal edges 
$e_1\in B_R$ and $e_2$ on the same column of $e_1$, at distance $n$ from
it, such that at time $t$ %one has a bead at $e_1$, a bead at $e_2$ and 
there is 
no bead between them.
Also, let $\Delta(R,\le t)=\sup\{\Delta(R,s),s\le t\}$.
  
\end{definition}

We need a preliminary result, giving an upper bound on the probability
of having a large gap among beads. We start from the case of the torus:
\begin{lemma}
\label{th:deltamaxToro}
  For $\zeta\in \mathbb N$ there exists a constant $C=C_\zeta$ such that, for
  all $T>0, R\ge 1$ and
  $L$ large enough,
  \begin{gather}
    \int \pi^L_{\rho}(d\sigma)\mathbb P_{\sigma}[\Delta(R,\le T)\ge C_\z\log R]\le T\frac{C_\z}{R^{\z}}.
  \end{gather}
\end{lemma}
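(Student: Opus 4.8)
The plan is to control the largest inter-bead gap that can be "seen" from the ball $B_R$ up to time $T$ by reducing everything, column by column, to the one-dimensional Discrete Hammersley Dynamics of Section~\ref{sec:DHD}, for which we already have the explicit displacement bound \eqref{eq:25}. Recall from the proof of Proposition~\ref{th:ex} that on each column $\ell$ of the torus the bead positions are dominated below by a $q$-slowed DHD with the same initial data and coupled $q$-clocks, and (by the mirror-symmetric argument) dominated above by a $p$-slowed DHD run "downward"; hence any gap between consecutive beads at time $s$ can only be larger than the initial gap if some bead has moved a macroscopic distance, which the DHD controls. So the strategy is: (i) fix a column $\ell$ and a horizontal edge $e_1\in B_R$; (ii) bound the probability that at some time $s\le T$ there is a bead at $e_1$, a bead at distance $\ge n$ above it (say), and no bead between; (iii) express this event in terms of the DHD particle labels on column $\ell$ and use \eqref{eq:25}; (iv) take a union bound over $e_1\in B_R$ (there are $O(R^2)$ of them) and over the $O(L)$ columns intersecting $B_R$ and finally over $n\ge C_p\log R$.

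First I would note that under the stationary measure $\pi^L_\rho$ the beads on column $\ell$ have a density $\lfloor \rho_3 L\rfloor/L$ bounded away from $0$, and standard large-deviation / determinantal estimates for $\pi^L_\rho$ (uniform in $L$ large) give that the probability that in the \emph{initial} configuration a given edge $e_1$ has its next bead above it at distance $\ge m$ decays like $e^{-cm}$ for some $c=c(\rho)>0$; in particular the initial gap straddling any fixed edge is $O(\log R)$ with overwhelming probability once we union over $B_R$. Thus it suffices to control gaps that are \emph{created dynamically}. If at time $s\le T$ there is a bead at $e_1$, a bead at $e_2$ at distance $n$ above, and nothing between, then — tracking labels — either some bead whose label $j$ lies between the labels present in the interval $[e_1,e_2]$ in the initial configuration has moved out of that spatial window (upward past $e_2$ or downward past $e_1$) by time $s$, or the initial gap already contained few beads. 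The former forces a DHD particle displacement of size $\gtrsim n - (\text{initial gap})$; applying \eqref{eq:25} (with $t$ replaced by $qT$ or $pT$) this probability is at most, crudely, $\sum_{j}\frac{(qT)^{j}(\text{distance})^{j}}{(j!)^2}$, which for distance $\asymp n$ and $j\ge 1$ is bounded by $e^{O(\sqrt{Tn})}\cdot$(something), and more importantly is summable and super-polynomially small in $n$ once $n\ge C_p\log R$ with $C_p$ large depending on $p$ and on $T$ — here one uses $((j)!)^2$ in the denominator to beat the $(Tn)^j$ numerator for the relevant range $j\asymp n/\log$. Combining with the $e^{-cn}$ bound on the initial gap and the number of columns $O(L)$ and edges $O(R^2)$, a union bound gives the claimed $T\,C_p/R^p$.

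A few technical points require care. One must be careful that on the torus a "gap above $e_1$ of size $n$" with $n$ comparable to $L$ wraps around; but we only care about $n\asymp \log R \ll L$ for $L$ large, so this is harmless — this is why the statement asks for $L$ large enough. One must also handle the supremum over $s\le T$ rather than a fixed time: here the monotonicity Lemma~\ref{th:DHDmono} together with the fact that gaps can only grow by beads leaving a window lets us bound $\Delta(R,\le T)$ by a functional of the DHD at the single time $T$ (the DHD displacement $z_T(n)-z_0(n)$ is itself a supremum-type quantity via \eqref{eq:38}, i.e. it already dominates all earlier displacements), so no extra union over $s$ is needed. The main obstacle, I expect, is the bookkeeping in step (iii): translating the geometric "no bead between $e_1$ and $e_2$" event into a clean statement about DHD labels, while simultaneously accounting for the interlacing constraints with neighboring columns (which only make gaps \emph{smaller}, so they can be safely discarded for an upper bound, but this has to be argued). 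Once the reduction is clean, the probability estimate is just \eqref{eq:25} plus the exponential tail of $\pi^L_\rho$-gaps, and the choice $C_p$ large enough to convert "super-polynomial in $\log R$" into "$R^{-p}$" is routine.
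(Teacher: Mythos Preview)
Your approach is workable in spirit but takes a genuinely different route from the paper, and misses a significant simplification. The paper's proof does \emph{not} invoke the DHD comparison at all for this lemma. Instead, it exploits the fact that on the torus stationarity of $\pi^L_\rho$ has \emph{already} been proven (Proposition~\ref{th:invarianzatoro}): from the static estimate $\pi^L_\rho(\Delta(R,0)\ge C_p\log R)\le C_p R^{-p}$ (Lemma~\ref{th:DeltaH} plus $\pi^L_\rho\to\pi_\rho$), stationarity gives the same bound at every fixed $t$, hence $\mathbb E_{\pi^L_\rho}\int_0^{T+1}{\bf 1}_{\{\Delta(R,t)\ge C_p\log R\}}\,dt\le (T+1)C_p R^{-p}$ by Fubini. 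The passage from fixed $t$ to $\sup_{t\le T}$ is then a short stopping-time argument: if $\tau=\inf\{t:\Delta(R,t)>2C_p\log R\}$, then conditionally on $\tau\le T$, with probability bounded away from zero no clock in $B_{R+C_p\log R}$ rings in $[\tau,\tau+R^{-2}]$, so the time-integral above is at least of order $R^{-2}$; comparing with the Fubini bound yields $\mathbb P_{\pi^L_\rho}(\tau\le T)\le (T+1)C_p R^{2-p}/u$, and one absorbs the $R^2$ by replacing $p$ by $p+2$.

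Your DHD-based argument is essentially the strategy the paper reserves for Lemma~\ref{th:deltamax} (the infinite-lattice analog), precisely because there stationarity is \emph{not yet} available. So what you propose would prove the lemma, but at the cost of redoing the harder argument in a setting where it is unnecessary. Two minor issues: you write ``$O(L)$ columns intersecting $B_R$'' where it should be $O(R)$; and your constant will depend on $T$ (as in Lemma~\ref{th:deltamax}), whereas the stated form has $C_p$ independent of $T$ with the $T$-dependence factored out linearly---this is exactly what the stationarity-plus-Fubini argument delivers and your pathwise DHD bound does not. Also, the asymptotic ``$e^{O(\sqrt{Tn})}$'' for the DHD tail is not the right shape: from \eqref{eq:28} one gets $(Tn)^k/(k!)^2$, which for $k\sim\rho_3 n$ decays like $e^{-c\,n\log n}$, not like $e^{O(\sqrt{Tn})}$.
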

To be precise, the constant $C_\z$ also depends on the density vector $\rho$ (through the constant $C(\lambda,u,\rho)$ of Lemma \ref{th:DeltaH}); in this  section, for lightness of notation, we often keep the $\rho$ dependence implicit.
\begin{proof}[Proof of Lemma \ref{th:deltamaxToro}]
From Lemma \ref{th:DeltaH} and convergence of $\pi^L_\rho$ to
$\pi_\rho$ it is easy to see that, for $L$ large enough,
\begin{gather}
  \label{eq:76}
  \pi_\rho^L(\Delta(R,0)\ge C_\z\log R)\le  C_\z R^{-\z}
\end{gather}
if $C_\z$ is chosen sufficiently large. Using stationarity of
$\pi^L_\rho$, this holds for every fixed $t\ge0$.
Then, %for $R\ge R_0(p,T)$
\begin{gather}
  \label{eq:77}
 \mathbb E_{\pi^L_\rho}\left(\int_0^{T+1} {\bf 1}_{\{
      \Delta(R,t)\ge C_\z\log R\}} dt\right)\le (T+1)C_\z\, R^{-\z}.
\end{gather}
Let
\begin{gather}
  \tau=\inf\{t>0:\Delta(R,t)>2 C_\z\log R\}
\end{gather}
and observe that, after time $\tau$, a clock has to ring in the ball
$B_{R+C_\z\log R}$ before $\Delta(R,t)$ becomes strictly smaller than
$C_\z\log R$ (this is just a necessary condition: not every ring in
$B_{R+C_\z\log R}$ decreases $\Delta(R,t)$). Note that the realization
of the Poisson clock rings at times $t>\tau$ is independent of the
process up to $\tau$ (and of $\tau$ itself). On the other hand, with
probability $u$ uniformly bounded away from zero, none of the $O(R^2)$
clocks in $B_{R+C_\z\log R}$ rings in the time lag
$[\tau,\tau+1/R^2]$.  In conclusion,
\begin{gather}
  \mathbb E_{\pi^L_\rho}\left(\left.\int_0^{T+1} {\bf 1}_{\{
      \Delta(R,t)\ge C_\z\log R\}} dt\right|\tau\le T\right)\ge u/R^2.
\end{gather}
Together with \eqref{eq:77} we get that
\begin{gather}
  \mathbb P_{\pi^L_\rho}(\tau\le T)\le (T+1)C_\z R^{2-\z}/u.
\end{gather}
We conclude by observing that $\{\tau\le T\}=\{\Delta(R,\le T)> 2 C_\z\log
R\}$ and recalling that $\z$ can be chosen as large as wished.
\end{proof}

For the dynamics on $\mathcal H$ the same argument does not work since
we do not know (yet) that $\pi_\rho$ is stationary. A similar  result however
still holds, but the proof requires a comparison with the DHD we
introduced above:
\begin{lemma}
\label{th:deltamax} For any $T<\infty,\z\in \mathbb N$ there exists a constant
$C=C(\z,T)$ such that for $R$ large
\begin{eqnarray}
  \label{eq:29}
\int \pi_{\rho}(d\sigma)\mathbb P_{\sigma}[\Delta(R,\le T)\ge C\log R]\le \frac{C}{R^{\z}}.
\end{eqnarray}
\end{lemma}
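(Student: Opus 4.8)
The plan is to transfer the torus estimate of Lemma \ref{th:deltamaxToro} to the infinite lattice using the comparison with the Discrete Hammersley Dynamics established in Section \ref{sec:well-d}, together with the tail bound \eqref{eq:25}. The obstacle is precisely the one flagged in the introduction: on $\mathcal H$ we do not yet know that $\pi_\rho$ is stationary, so we cannot simply integrate a time-zero gap estimate against the stationary measure as was done in \eqref{eq:76}--\eqref{eq:77}. Instead we must control the out-of-equilibrium evolution of the gaps directly. The key observation is that a large gap of size $n$ between two beads $e_1,e_2$ on a common column, with $e_1\in B_R$, can be produced at time $t$ only in one of two ways: either the inter-bead spacings were already large in the initial configuration $\sigma$ near $B_R$ (an event controlled by the determinantal structure of $\pi_\rho$, via an estimate of the form of Lemma \ref{th:DeltaH}), or some bead has travelled a long distance by time $t$ to open up the gap, and such large displacements are suppressed by the DHD comparison.

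First I would fix the column $\ell$ passing through (or near) $B_R$ and recall from the proof of Proposition \ref{th:ex} that the positions $z_t(\ell,n;K)$ in column $\ell$ are stochastically dominated below by a $q$-slowed DHD started from the same initial data $z_0(\ell,\cdot)$, and symmetrically dominated above (in the reversed coordinate) by a $p$-slowed DHD; this is where we use that each bead can only move up by a total amount bounded by the displacement of the corresponding DHD particle. Combining this with the bound \eqref{eq:25}, together with the fact that under $\pi_\rho$ the initial inter-bead gaps along a column have exponential tails and the partial sums $z_0(\ell,n)$ concentrate around $n/\rho_3$ (ergodicity, as noted after Proposition \ref{th:ex}), I would show that with probability at least $1-C R^{-p}$ the following holds: for every column $\ell$ intersecting $B_{R+C\log R}$, and for every pair of beads whose lower endpoint lies in $B_R$, the gap between consecutive beads stays below $C\log R$ for all $s\le T$. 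The point is that to create a gap exceeding $C\log R$ one needs either $\Omega(\log R)$ consecutive initial gaps to be anomalously large (probability $\le R^{-p}$ by a union bound over $O(R^2)$ sites and an exponential tail), or a single bead to be displaced by $\Omega(\log R)$, whose probability is bounded using \eqref{eq:25} with $k=\Theta(\log R)$ and $z_0(n)-z_0(j)=O(n-j)/\rho_3$, giving a superpolynomially small bound after summing over $j$ and over the $O(R^2)$ starting positions in $B_{R+C\log R}$.

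The bookkeeping step I would carry out carefully is the union bound: $\Delta(R,\le T)\ge C\log R$ means some such bad pair appears at some time $s\le T$, and because the gap process in each column is piecewise constant and changes only when a clock rings within the relevant ball, it suffices to control the supremum over $s\le T$, which by the displacement estimates reduces to a time-$T$ event times a factor $T$ coming from the Poisson clock rate (exactly as in the $\tau$-argument of Lemma \ref{th:deltamaxToro}, with the stopping-time trick replaced here by the monotone DHD domination). The main obstacle, as anticipated, is making the DHD domination genuinely control the \emph{gap} and not merely individual bead positions: a gap can open because the lower bead moved up or the upper bead moved down, and on the infinite lattice the upper bead could in principle be far away, so one needs the two-sided domination (by a $q$-slowed DHD from below and a $p$-slowed DHD from above) uniformly in the cutoff parameters $K=(K_p,K_q)$, which is exactly what the construction in Proposition \ref{th:ex} provides, and then pass to the limit \eqref{eq:37}. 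Once the gap is shown to remain $O(\log R)$ with the stated probability, \eqref{eq:29} follows by choosing the constant $C=C(p,T)$ large enough to absorb all the union-bound factors.
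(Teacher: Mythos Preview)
Your strategy is essentially the paper's: a large gap at some $s\le T$ between two consecutive beads $b_1,b_2$ forces either (a) the \emph{single} initial gap between them to be $\gtrsim\log R$ (exponential tails from Lemma~\ref{th:DeltaH}), or (b) one of $b_1,b_2$ to have been displaced by $\gtrsim\log R$, which is controlled by the DHD comparison and \eqref{eq:25}. The paper packages (b) into a separate displacement lemma (Lemma~\ref{lemma:ndovai}, the tagged-bead bound under $\hat\pi_\rho$) and then runs a clean three-case analysis on the time-zero location of $b_1$: inside $B_{2R}$ and far from $b_2$; inside $B_{2R}$ and close to $b_2$; outside $B_{2R}$ (so $b_1$ itself had to travel $\ge R$).

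Two small corrections to your write-up. First, your phrase ``$\Omega(\log R)$ consecutive initial gaps anomalously large'' should be ``a single initial gap of size $\Omega(\log R)$'': beads cannot overtake, so $b_1,b_2$ consecutive at time $s$ were already consecutive at time $0$. Second, there is no need for a $\tau$-stopping-time argument or an extra factor $T$ here: the DHD is monotone (particles move only one way), so the displacement at the deterministic time $T$ already equals the supremum over $s\le T$, and the event $\{\exists s\le T:|\phi_s-\phi_0|\ge D\}$ is bounded directly. The opening line about ``transferring the torus estimate'' is also a red herring---the argument is self-contained on $\mathcal H$.
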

A useful variant of Lemma \ref{th:deltamax} that we will use later (and
whose proof follows almost exactly the same argument) is:
\begin{corollary} \label{th:comodo}
Fix a horizontal edge $e$ and  a time $T>0$.
For $n^\pm\ge 0$ let $A_{T,n^+,n^-}$ be the event that there exists a time $s\le T$ and
horizontal edges $e^\pm$  on the same column as $e$, with $e^+$
at distance $n^+$ above $e$ and $e^-$ at distance $n^-$ below
it, such that at time $s$ there is a bead at $e^\pm$ and no bead
between them. There exists $C=C(\rho,T)$ such that 
  \begin{eqnarray}
    \mathbb P_{\pi_\rho}(A_{T,n^+,n^- })\le C e^{-(1/C)(n^++n^-)}.
  \end{eqnarray}
\end{corollary}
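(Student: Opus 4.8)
The plan is to adapt the proof of Lemma~\ref{th:deltamax} almost verbatim, replacing the ball $B_R$ by the single fixed edge $e$ and the threshold $C\log R$ by $N:=n^++n^-$; powers of $R$ then become powers of $N$, which are harmless against the exponential. Write $\ell$ for the column of $e$ and identify the horizontal edges of $\ell$ with their vertical coordinates, so that $e^+$ and $e^-$ differ by $N$. The first, purely static, ingredient is the time-zero bound $\mathbb P_{\pi_\rho}(A_{0,n^+,n^-})\le C e^{-N/C}$, which holds because $A_{0,n^+,n^-}$ is contained in the event that column $\ell$ carries no bead in a prescribed interval of length $N$, and the probability of such a hole is exponentially small in $N$ by Lemma~\ref{th:DeltaH} (equivalently, by \eqref{eq:multidimero} together with the decay \eqref{eq:64} of $K^{-1}$).

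For positive times I would first reduce $A_{T,n^+,n^-}$ to a statement about a single bead pair. Since at most one bead occupies a given edge and beads preserve their relative order, on $A_{T,n^+,n^-}$ the bead sitting at $e^-$ and the bead sitting at $e^+$ at the relevant time $s$ must carry consecutive labels $(\ell,k)$ and $(\ell,k+1)$, so
\[
A_{T,n^+,n^-}\ \subseteq\ \bigcup_{k\in\mathbb Z}\Big\{\exists\,s\le T:\ z_s(\ell,k)=e^-\ \text{and}\ z_s(\ell,k+1)=e^+\Big\}.
\]
On the $k$-th event in this union one has $N=z_s(\ell,k+1)-z_s(\ell,k)\le\big(z_0(\ell,k+1)-z_0(\ell,k)\big)+\mathcal D^-_k+\mathcal D^+_{k+1}$, where $\mathcal D^-_k:=\sup_{s\le T}[z_0(\ell,k)-z_s(\ell,k)]_+$ and $\mathcal D^+_{k+1}:=\sup_{s\le T}[z_s(\ell,k+1)-z_0(\ell,k+1)]_+$; hence one of these three quantities is $\ge N/3$. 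Moreover, if $z_0(\ell,k)$ lies at distance $>2N$ from $e$ then, since bead $(\ell,k)$ is within distance $N$ of $e$ at time $s$, the total displacement of that bead up to $T$ exceeds $N$.

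It remains to bound the individual probabilities and sum over $k$. By the coupling in the proof of Proposition~\ref{th:ex}, the downward motion of bead $(\ell,k)$ up to time $T$ is dominated by that of particle $k$ of a rate-$q$ DHD started from the bead positions of $\ell$, and by monotonicity in time of the DHD (its particles only move left) $\mathcal D^-_k$ is bounded by the \emph{time-$T$} displacement of that DHD particle; running the same argument on the reflected column (which is again governed by a DHD, now of rate $p$, with the same spacing law by reflection invariance of $\pi_\rho$) controls $\mathcal D^+_{k+1}$. Feeding the exponential tails of the inter-bead spacings of $\pi_\rho$ into \eqref{eq:25} and its reflected analogue gives, for all $k$ and $m\ge1$,
\[
\mathbb P_{\pi_\rho}(\mathcal D^-_k\ge m)\le Ce^{-m/C},\qquad \mathbb P_{\pi_\rho}(\mathcal D^+_{k+1}\ge m)\le Ce^{-m/C},\qquad \mathbb P_{\pi_\rho}\big(z_0(\ell,k+1)-z_0(\ell,k)\ge m\big)\le Ce^{-m/C},
\]
with $C=C(\rho,T)$. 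A union bound now finishes the argument: the $O(N)$ labels $k$ with $z_0(\ell,k)$ within $2N$ of $e$ contribute at most $CNe^{-N/(3C)}$, while the remaining labels contribute at most $\sum_{d>2N}C e^{-(d-N)/C}\le Ce^{-N/C}$ (organising by the distance $d$ from $z_0(\ell,k)$ to $e$ and using that $\ell$ carries $O(1)$ beads per unit length); adding the time-zero bound and relabelling $C$ yields the claimed estimate.

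The main obstacle is the displacement bound $\mathbb P_{\pi_\rho}(\mathcal D^\pm_k\ge m)\le Ce^{-m/C}$: estimate \eqref{eq:25} is stated for a fixed deterministic initial configuration, and one must turn it into a bound that survives averaging over the $\pi_\rho$-distributed inter-bead spacings — which are only weakly dependent and merely have \emph{exponential} (not bounded) tails — and, for the far-away labels, also over conditionings on the approximate position of a single bead. This is exactly where the determinantal description of $\pi_\rho$ (Lemma~\ref{th:DeltaH}) is used, and it is coupled with the need to keep the union over the infinitely many bead labels $k$ convergent, handled via the observation that a bead starting far from $e$ can reach a neighbourhood of $e$ only through an atypically large displacement.
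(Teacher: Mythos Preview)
Your argument is correct and tracks the paper's proof of Lemma~\ref{th:deltamax} essentially verbatim: your trichotomy (large initial gap / large displacement of one of the two consecutive beads / initial position far from $e$) is exactly cases (i)--(iii) there, with $R$ replaced by $N=n^++n^-$. The only organizational difference is that the paper unions over initial \emph{positions} of the bead $b_1$ rather than over bead \emph{labels} $k$, which lets it invoke the Palm-measure displacement bound Lemma~\ref{lemma:ndovai} directly and avoids the random-cardinality issue you handle at the end; your route through \eqref{eq:25} simply re-derives the content of that lemma.
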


For the proof of Lemma \ref{th:deltamax} we need the following
preliminary result:
\begin{lemma}
  \label{lemma:ndovai} Recall that $\hat \pi_\rho$ is the Gibbs
  measure conditioned to have a bead at $e_0$ and that $\phi_t-\phi_0$
  is the displacement of the tagged bead at time $t$. Then, for
  every $T>0$ there
  exists a positive constant $a=a(\rho,T)$ such that for every $D\ge0$
  \begin{eqnarray}
    \mathbb P_{\hat \pi_\rho}(\exists t\le T:|\phi_t-\phi_0|\ge D)\le a\exp(-D/a).
%    {C(\rho, T,p)}{D^p}.%\exp(-(1/C)D\log D).
  \end{eqnarray}
\end{lemma}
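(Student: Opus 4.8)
The plan is to control separately a large \emph{upward} and a large \emph{downward} excursion of the tagged bead $b_0$ and to bound each by $Ce^{-D/C}$; the two arguments are mirror images (exchange the $p$-clocks with the $q$-clocks, and ``up'' with ``down''), so I only describe the downward one, i.e.\ the bound on $\mathbb P_{\hat\pi_\rho}(\exists\,t\le T:\ \phi_t-\phi_0\le -D)$. Let $\ell_0$ be the column of $e_0$ and $n^*$ the (fixed) label of $b_0$, so $\phi_t-\phi_0=z_t(\ell_0,n^*)-z_0(\ell_0,n^*)$. First I dominate $b_0$ from below by a Discrete Hammersley Dynamics: run a DHD on the possible bead positions of column $\ell_0$ with clocks of rate $q$ (equivalently, the DHD of Section~\ref{sec:DHD} run up to time $qt$), driven by the $q$-clocks of $\ell_0$ and started from the bead configuration of $\ell_0$. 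This is precisely the coupling of the proof of Proposition~\ref{th:ex}, and the same monotonicity argument (in the cutoff-free limit) gives $z_t(\ell_0,n)\ge z^{\mathrm{DHD}}_t(n)$ for all $n,t$: the interlacing constraint and the $p$-clocks only push the beads of $\ell_0$ above the corresponding DHD particles. Since DHD particles only move downward, $z^{\mathrm{DHD}}_t(n^*)$ is non-increasing in $t$, so on $\{\exists\,t\le T:\phi_t-\phi_0\le-D\}$ we have $z^{\mathrm{DHD}}_T(n^*)-z^{\mathrm{DHD}}_0(n^*)\le-D$, whence
\begin{gather}
\label{eq:ndoplan1}
\mathbb P_{\hat\pi_\rho}\!\big(\exists\,t\le T:\ \phi_t-\phi_0\le -D\big)\ \le\ \int \hat\pi_\rho(d\sigma)\,\mathbb P_{z(\sigma)}\!\big(z^{\mathrm{DHD}}_{T}(n^*)-z^{\mathrm{DHD}}_{0}(n^*)\le -D\big),
\end{gather}
with $\mathbb P_z$ the law of the DHD started from the one-column configuration $z$. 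This is meaningful because $\hat\pi_\rho$ has bounded density w.r.t.\ $\pi_\rho$ and $\pi_\rho(Y)=1$, so for $\hat\pi_\rho$-a.e.\ $\sigma$ the DHD is well defined and \eqref{eq:25} applies.

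Now estimate the inner probability. Let $g_m(\sigma)$ be the sum of the first $m$ inter-bead distances of column $\ell_0$ below $b_0$ (the side relevant for the motion of particle $n^*$). By \eqref{eq:25} (with $t\leftrightarrow qT$, $k\leftrightarrow D$, $n\leftrightarrow n^*$) the inner probability in \eqref{eq:ndoplan1} is at most
\begin{gather}
S(\sigma)\ :=\ \sum_{m\ge1:\ g_m(\sigma)\ge D}\frac{(qT)^m\,\big(g_m(\sigma)-D\big)^m}{(m!)^2}.
\end{gather}
Let $G_D:=\{g_m(\sigma)\le \tfrac{2m}{\rho_3}+\tfrac D2\ \text{for all }m\ge1\}$ (recall $\rho_3$ is the bead density, so $\mathbb E\,g_m$ is of order $m/\rho_3$). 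On $G_D$ one has $(g_m-D)_+\le(\tfrac{2m}{\rho_3}-\tfrac D2)_+$, which vanishes for $m\le\rho_3 D/4$; for larger $m$, using $m!\ge(m/e)^m$,
\begin{gather}
\frac{(qT)^m\,(g_m-D)^m}{(m!)^2}\ \le\ \Big(\frac{2qTe^2}{\rho_3 m}\Big)^{m}\ \le\ 2^{-m}\qquad\text{once }m\ge m_1:=\lceil 4qTe^2/\rho_3\rceil,
\end{gather}
so that, as soon as $D$ is large enough that $\rho_3 D/4\ge m_1$, $S(\sigma)\le\sum_{m>\rho_3 D/4}2^{-m}\le 2^{\,2-\rho_3 D/4}$ on $G_D$. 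Bounding the inner probability by $1$ on $G_D^c$, \eqref{eq:ndoplan1} yields
\begin{gather}
\mathbb P_{\hat\pi_\rho}\!\big(\exists\,t\le T:\ \phi_t-\phi_0\le -D\big)\ \le\ 2^{\,2-\rho_3 D/4}+\hat\pi_\rho\big(G_D^c\big).
\end{gather}

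The only remaining, model-specific, point is the bound $\pi_\rho(G_D^c)\le Ce^{-D/C}$ (which transfers to $\hat\pi_\rho$ by absolute continuity). This is an equilibrium large-deviation estimate for the number of beads of a column in a segment: by the determinantal representation \eqref{eq:multidimero} together with \eqref{eq:64}, the number of horizontal dimers on any finite family of edges is distributed as a sum of independent Bernoulli variables, so a Chernoff bound gives $\pi_\rho(\#\{\text{beads of }\ell_0\text{ in a segment of length }\ell\}\le\tfrac12\rho_3\ell)\le e^{-c\ell}$ with $c=c(\rho)>0$, equivalently $\pi_\rho(g_m\ge\tfrac{2m}{\rho_3}+\tfrac D2)\le Ce^{-(m+D)/C}$; summing over $m\ge1$ gives the claim. (This is the same sort of static input used for Lemma~\ref{th:DeltaH}.) The upward excursion is handled identically, comparing now with the DHD driven by the $p$-clocks of $\ell_0$ --- in which particles move upward, which after reflecting the column is again a DHD --- and using \eqref{eq:25} with the equilibrium estimate for the spacings \emph{above} $b_0$. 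Adding the two bounds and enlarging the constant to absorb the finitely many small values of $D$ gives the lemma with $a=a(\rho,T)$.

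The main obstacle is the equilibrium estimate $\pi_\rho(G_D^c)\le Ce^{-D/C}$: the DHD comparison alone only gives a pathwise, non-quantitative control, and upgrading it to an exponential bound requires genuine (though standard) exponential tightness of the inter-bead spacings under $\pi_\rho$, coming from its determinantal structure. The only other delicate point is the domination step, where one must verify --- exactly as in the proof of Proposition~\ref{th:ex} --- that the interlacing constraints and the ``wrong-sign'' clocks do not break the one-column DHD comparison, and keep the orientation conventions consistent when switching to the upward excursion.
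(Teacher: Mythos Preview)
Your proposal is correct and follows essentially the same route as the paper: reduce to the downward case by symmetry, dominate the tagged bead from below by a rate-$q$ DHD on its column (the coupling of Proposition~\ref{th:ex}), use the monotonicity of DHD particles to pass from the random time to the fixed time $T$, plug into the deterministic bound \eqref{eq:25}, and control the initial spacings by an equilibrium large-deviation estimate. The paper phrases the good event as $z_0(n)-z_0(n-r)<\tfrac1{\epsilon(\rho)}\max[r,\epsilon(\rho)D]$ and invokes Lemma~\ref{th:DeltaH} directly, while you use the equivalent $g_m\le \tfrac{2m}{\rho_3}+\tfrac D2$; the resulting sums are handled the same way. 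One small remark: your sentence ``the number of horizontal dimers on any finite family of edges is distributed as a sum of independent Bernoulli variables'' is stated more generally than what you need or than what follows from \eqref{eq:multidimero}--\eqref{eq:64} alone; the relevant fact (for adjacent horizontal edges in a single column) is exactly the input of Lemma~\ref{th:DeltaH}, and you may simply cite it.
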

\begin{proof}[Proof of Lemma \ref{lemma:ndovai}] To fix ideas let us
  prove that 
 \begin{eqnarray}
    \mathbb P_{\hat \pi_\rho}(\exists t\le T:\phi_t-\phi_0\le - D)\le a\exp(-D/a).%\frac{C(\rho, T,p)}{D^p}.% C\exp(-(1/C)D\log D).
  \end{eqnarray} 
  We have seen in the proof of Proposition \ref{th:ex} that the
  downward displacement of a bead $b$ is at all times stochastically smaller than the
  leftward displacement of a DHD particle (for the DHD with clocks of
  rate $q$) up to the same time, started
  from a configuration where the particles are at the same position as
  the beads in the column corresponding to $b$. Since the DHD
  particles move only to the left, the event
  $\{\exists t\le T:\phi_t-\phi_0\le - D\}$ means that the DHD
  particle corresponding to $b$ has moved more than $D$ by the
  \emph{non-random} time $T$.

Call $n$ the label of the tagged bead $b$ in its column, initially at
position $z_0(n):=e_0$,  and  go back to
\eqref{eq:25}. 
Observe that if $z_0(n)-z_0(n-r)\ge u$ then there are at most $r$
beads in a set of $u$ adjacent horizontal edges below $z_0(n)$.
Using Lemma \ref{th:DeltaH} we see that, except with probability
exponentially small in $D$,
one has %for some positive $\epsilon(\rho)$, % (choosing $C=C_p$ large)
\begin{gather}
  \label{eq:31}
 z_0(n)-z_0(n-r)<\frac 1{\epsilon(\rho)}\max\left[r,\epsilon(\rho) D
\right]\quad\text{ for every } r\ge 1
%\quad \text{ for every } r>\epsilon(\rho) \frac C4 \log R
\end{gather}
for some positive $\epsilon$
  depending only on the slope $\rho$.
  Then, from \eqref{eq:25},  on the event \eqref{eq:31}
\begin{gather}
  \label{eq:36}
   \mathbb P_z(z_T(n)-z_0(n)\le -D)\le \sum_{r>\epsilon(\rho)D
   }\frac{T^r(r/\epsilon(\rho))^r}{(r!)^2} \le c(\rho,T)\,
   e^{-c'(\rho,T)D \log D}
\end{gather}
that decays super-exponentially in $D$.
\end{proof}

\begin{proof}[Proof of Lemma \ref{th:deltamax}]% Consider the enlarged ball $B_{2R}$ centered at the origin
% and of radius $2R$.
  On the event $\Delta(R,\le T)\ge C\log R $ there exists a time $s\le
  T$ and a horizontal edge $e\in B_R$ such that at time $s$ there is no bead in
  the $C\log R$ horizontal edges immediately above or immediately
  below $e$. Assume w.l.o.g. that the former is the case. Let $e^+$
  (resp. $e^-$) be the lowest horizontal edge above $e$ (resp. the
  highest edge below $e$) where there is some bead $b^+$ (resp. $b^-$)
  at time $s$. Call $N\ge C\log R$ the distance between $e^+$ and
  $e^-$.
%  is a bead at $e_1,e_2$, no
% bead between them and . Call $b_1$ and
% $b_2$ the beads
% that at time $s$ are in $e_1,e_2$.
There are two possible cases:
\begin{itemize}
\item [(i)] at time zero bead $b^+$ is within distance $N/10$ from $e^+$
  and similarly $b^-$ is within distance $N/10$ from $e^-$.  This
  implies that at time zero there is no bead in a vertical interval of
  length $N/2$, centered on the face at distance $N/2$ above $e$.
% at time zero $b_1$ is in $B_{2R}$ and is at distance
  % larger than $(C/2)\log R$ from $b_2$. 
  Since in the stationary measure the distance between neighboring
  beads has exponential tails (Lemma \ref{th:DeltaH}) and $N\ge C\log
  R$, this event has probability \[O\left(R^2 \exp\left(-a(\rho)\,
      C\log R\right)\right)\] for some positive $a$ depending only on
  the slope $\rho$, where the factor $R^2$ comes from a union bound
  over all possible positions of $e$. Choosing $C=C_\z$ sufficiently
  large, we get a $O(R^{-\z})$ bound.
\item [(ii)] At time zero, either $b^+$ is at distance $n\ge N/10$
  from $e^+$, or $b^-$ is at distance $m\ge N/10$ from $e^-$. Say, to fix ideas,  that
  the former is the case.
 % $b_1$ is initially in $B_{2R}$ and $b_2$ is within
%  distance $(C/2)\log R$ from it. 
This implies that at the (random) time $s\le T$ the bead $b^+$ has moved, say downward,  a distance $n\ge N/10$
with respect to the initial position. 
% either
%   either $b_2$ or $b_1$ has moved at least $(C/4)\log
%   R$ with respect to the initial position. To fix ideas, say that
%   $b_1$ has moved downward more than  $(C/4)\log
%   R$.
  Thanks to Proposition \ref{lemma:ndovai}, %if $C=C_\z$ is chosen
%  large enough, 
  this has probability exponentially small in $n$. Summing over $n\ge
  N/10$, over the possible values of $N\ge C\log R$ and over the
% $O(R^{-\z})$. % We have seen in the
% A union bound on
the $O(R^2)$ possible positions of $e$ gives a bound $O(R^{-\z})$ if $C$ is chosen large enough.
% \item [(iii)] at time zero $b_1$ is outside $B_{2R} $, at distance
%   $d\ge R$ from $B_R$: then, since it is
%   in $B_R$ at time $s$, the (say) downward displacement of $b_1$ at time $s$
%   is larger than $d$. Applying again Lemma \ref{lemma:ndovai} we get
%   that this scenario has probability exponentially small in $d$
% and even summing over initial positions with $d\ge R$ gives a bound 
% that decays exponentially in $R$.
\end{itemize}
\end{proof}
%\medskip

As an application, we show that information does not propagate
instantaneously through the system:
if two initial conditions sampled from
equilibrium differ only outside a ball of radius $R$, it is very
unlikely that in a short time the discrepancy propagates to reach the
center of the ball.
It is useful to give a proof of this fact, since an extremely similar
argument will provide the proof of Theorem \ref{th:losanghe}. For usual short-range
systems one has a ballistic propagation bound: information does not
travel more than a distance $const\times t$ in a time interval $t$ (cf. for instance
\cite[Sec. 3.3]{MartinStF}). The situation is more intricate here due
to the presence of a-priori unbounded gaps
among beads.

\medskip

Let the pair $(\sigma,\sigma')\in\Omega_{\mathcal H}^{\otimes 2}$ be distributed
according to some law $\nu$ such that
$\sigma\sim \pi_\rho$, $\sigma'\sim \pi_\rho$ and $\sigma,\sigma'$
coincide in $B_R$. Couple the two processes  by using the same
Poisson clocks for both and call $\mathbb P_\nu$ the law of the joint
process $(\sigma_t,\sigma'_t)$.
Let $\delta_{e}(t)\in\{0,1\}$ (resp. $\delta'_{e}(t)\in\{0,1\}$) be the bead
occupation variable at time $t$ at a fixed horizontal edge $e$ (say at the
center of $B_R$)  for the process started from $\sigma$
(resp. $\sigma'$). Let also
$\Delta_{max}=\max(\Delta(R,\le T),\Delta'(R,\le T))$, with
$\Delta'(R,\le T)$
referring to the process started from $\sigma'$.
\begin{proposition}
\label{th:storieidentiche}
For every $T<\infty,\z\in \mathbb N$ there is a constant $C$ such that 
\begin{eqnarray}
  \label{eq:24}
  \mathbb P_{\nu}(  (\delta_{e}(t))_{t\in[0,T]}\not\equiv
  (\delta'_{e}(t))_{t\in[0,T]})\le \frac {C}{R^\z}.
\end{eqnarray}
\end{proposition}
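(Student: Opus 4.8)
The plan is to show that, with probability $1-O(R^{-p})$, the event $A$ that ``all gaps among beads that could matter stay bounded by $C\log R$ throughout $[0,T]$'' holds simultaneously for both processes, and that \emph{on this event} the two coupled evolutions agree at the central edge $e$. The first part is exactly Lemma \ref{th:deltamax} (applied to each of $\sigma,\sigma'$ and to the ball $B_R$, say with $R$ replaced by $R/3$ so that there is room to spare), up to a union bound; this contributes the $O(R^{-p})$ in \eqref{eq:24}. The real content is the second part: a deterministic statement about the graphical construction saying that a discrepancy initially supported outside $B_R$ cannot reach $e$ by time $T$ if all relevant gaps are $\le C\log R$ and the total number of clock rings involved is controlled.

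First I would set up a ``region of influence'' argument. Since $\sigma$ and $\sigma'$ agree in $B_R$, and since a bead move only transmits information to neighbouring columns and to beads within the gap it traverses, a discrepancy can spread from column $\ell$ to column $\ell\pm1$ only through a clock ring, and can spread \emph{vertically} within a column only across a distance equal to the jump length of the bead that rings --- which, on the event $A$, is at most $C\log R$. So in any time interval where at most $m$ clocks ring in a suitable enlarged neighbourhood, the discrepancy set can grow by at most $m$ columns horizontally and $m\,C\log R$ vertically. To reach $e$ from outside $B_R$ we need of order $R/(C\log R)$ such steps, hence of order $R/(C\log R)$ clock rings in the spatial region swept out, which itself has area $O(R^2)$. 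The probability that $\gtrsim R/(C\log R)$ of the $O(R^2)$ independent rate-$1$ Poisson clocks in a region of area $O(R^2)$ ring within time $T$ is \emph{not} small by itself --- so this crude counting is not enough, and one must instead propagate a \emph{front}: I would define, for each time $t\le T$, the innermost column $\ell^*(t)$ reached by a discrepancy, and show it moves inward only when a clock rings adjacent to the current front, arguing as in the proof of Lemma \ref{th:deltamax} / the ballistic-propagation bound in \cite[Sec.~3.3]{MartinStF}. Each inward step of the front requires a clock ring at a specific (moving) location, so the front position is dominated by the position of a one-dimensional random walk / Poisson-driven process whose speed is $O(1/\log R)^{-1}=O(\log R)$ per unit time; the probability that such a front advances a distance $R$ in time $T$ is then bounded by a large-deviation estimate of the form $\P(\mathrm{Poisson}(cT\log R)\ge R/(C\log R))$, and with $C$ chosen large this is $O(R^{-p})$ for $R$ large. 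Combining with the ``gaps stay small'' event via Lemma \ref{th:deltamax} finishes the estimate.

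Concretely, the steps in order are: (1) fix $p$, $T$; invoke Lemma \ref{th:deltamax} (for both $\sigma$ and $\sigma'$, with the ball radius a fixed fraction of $R$) to get $\P_\nu(\Delta_{max}\ge C\log R)\le C R^{-p}$; (2) work on the complementary event $\{\Delta_{max}<C\log R\}$ and on a graphical-construction event $\mathcal G_R$ that no clock rings ``too fast'' near the front, formalising the front process $\ell^*(t)$ and showing its inward displacement in $[0,T]$ is stochastically dominated by $(C\log R)\cdot N$ with $N\sim\mathrm{Poisson}(c\,T\,R^2\cdot(\text{ring rate per front-adjacent site}))$ --- more carefully, $N$ counts rings adjacent to the advancing front, which at each step is a bounded number of sites, so $N\sim\mathrm{Poisson}(c T k)$ after $k$ steps, giving $\P(\text{front advances }R)\le \P(\text{it takes }\ge R/(C\log R)\text{ steps in time }T)$; (3) bound this by \eqref{eq:37bis}-type Poisson tail estimates, choosing $C=C(p,T)$ large enough that the bound is $O(R^{-p})$; (4) on $\mathcal G_R^c\cap\{\Delta_{max}<C\log R\}$ the front never reaches the column of $e$, so $\delta_e(t)=\delta'_e(t)$ for all $t\le T$; (5) union-bound steps (1) and (3).

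The main obstacle I anticipate is step (2): making the ``front'' genuinely well-defined and monotone is delicate here because a single bead move can be an arbitrarily long jump, so the discrepancy does not spread purely locally --- it spreads by \emph{gap-length} jumps, which is exactly why one needs the a-priori control $\Delta_{max}<C\log R$ to convert ``one ring'' into ``bounded spread''. One must be careful that, conditionally on $\Delta_{max}<C\log R$ up to the current time, the future clock rings are still (close enough to) independent of the front's past --- which works because, as in the proof of Lemma \ref{th:deltamax} and Lemma \ref{th:deltamaxToro}, the Poisson process of rings after any stopping time is independent of the past, and the event $\{\Delta_{max}<C\log R\}$ can be built into the filtration. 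I expect this bookkeeping, rather than any single inequality, to be where the proof in Section \ref{sec:propa} spends its effort, and it is precisely the argument the authors say ``will provide the proof of Theorem \ref{th:losanghe}''.
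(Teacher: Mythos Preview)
Your high-level decomposition --- use Lemma~\ref{th:deltamax} for both marginals to get $\Delta_{max}\le C_p\log R$ except with probability $O(R^{-p})$, then argue that on this event no discrepancy can propagate from outside $B_R$ to $e$ by time $T$ --- matches the paper. But the paper carries out the propagation step by a \emph{backward chain} argument rather than a forward front. If $(\delta_e(t))\not\equiv(\delta'_e(t))$, let $t_1$ be the first discrepancy time at $e$; a short case analysis shows that at $t_1$ a clock rings at some $x_1$ in the column of $e$ with $|x_1-e|\le\Delta_{max}$, and there is an edge $e_1$ in a neighbouring column, $|e_1-x_1|\le\Delta_{max}+1$, where the occupations already disagree at $t_1^-$. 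Iterating produces a chain $x_1,\dots,x_n$ with $x_i,x_{i+1}$ on neighbouring columns, $|x_i-x_{i+1}|\le 3C_p\log R$, clock rings at ordered times $t_1>\dots>t_n$ in $[0,T]$, and $|x_n|\ge R/2$, so $n\ge R/(6C_p\log R)$. A union bound over the $\le(C'_p\log R)^n$ possible chains, times the Poisson tail $(T(p+q))^n/n!$ from \eqref{eq:37bis} for a fixed chain to carry $n$ ordered rings in $[0,T]$, is $o(R^{-p})$.

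Your forward-front version has a genuine gap at the claim ``$N$ counts rings adjacent to the advancing front, which at each step is a bounded number of sites''. Once discrepancies have entered $B_R$, the set of discrepant edges in the innermost column $\ell^*(t)$ can itself grow without bound (every ring there near a discrepancy in column $\ell^*(t)+1$ may create a new one), so the number of edges in column $\ell^*(t)-1$ lying within $\Delta_{max}$ of some discrepancy is not $O(1)$, even on $\{\Delta_{max}\le C\log R\}$. Your Poisson domination of the front displacement is therefore unjustified as written. The backward-chain trick sidesteps this entirely: it follows a \emph{single} path of rings from $e$ out to $B_R^c$, so the entropy per step is $O(\log R)$ irrespective of how large the full discrepancy region has become, and the combinatorics reduces to the clean sum $\sum_{n\ge R/(6C_p\log R)}(C'_p\log R)^n (T(p+q))^n/n!$.
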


\begin{proof}[Proof of Proposition \ref{th:storieidentiche}]
  % Let us explain the proof in the particular case $q=1,p=0$ (beads only move
% upwards) but I think a simple extension works for the general case.
% Let us consider the case $L=\infty$ where some complications
% (which I hope are only marginal) are avoided.  See remark at the end
% of the section.

We have from Lemma \ref{th:deltamax} 
\begin{gather}
  \label{eq:14}
  \mathbb P_{\nu}((\delta_{e}(t))_{t\in[0,T]}\not\equiv (\delta'_{e}(t))_{t\in[0,T]})\\\le
\mathbb P_{\nu}\left[(\delta_e(t))_{t\in[0,T]}\not\equiv (\delta'_e(t))_{t\in[0,T]};
  \Delta_{max}\le C_\z \log R\right]+C_\z/R^\z.
\end{gather}
On the event $(\delta_e(t))_{t\in[0,T]}\not\equiv (\delta'_e(t))_{t\in[0,T]}$
call $t_1\le T$ the first time when $\delta_e(t_1)\ne \delta'_e(t_1)$. 
There are two possible cases:
\begin{enumerate}
\item [(i)]  $\delta_e(t_1^-)=\delta'_e(t_1^-)=1$ and say
  $\delta_e(t_1)=0\ne\delta'_e(t_1)=1$. In this case at $t_1$ a clock rings
  in the column $0$ (the one of $e$) at a horizontal edge $x_1$ within distance
  $\Delta_{max}$ from $e$ and in configuration $\sigma'_{t_1^-}$ (but
  not in $\sigma_{t_1^-}$) a bead in a neighboring column is preventing the
  bead at $e$ to move to $x_1$. At time
  $t_1$ there is therefore a horizontal edge $e_1$ in column $\pm1$, with
  distance within $\Delta_{max}+1$ from $x_1$ (the $+1$ is because it
  is on the neighboring column), where the bead
  occupation variable is different.

 \item [(ii)]$\delta_e(t_1^-)=\delta'_e(t_1^-)=0$ and say
  $\delta_e(t_1)=0\ne\delta'_e(t_1)=1$. This means that at $t_1$ the
  clock at $e$ rings (in this case we set $x_1:=e$) and that in
  configuration $\sigma_{t_1^-}$ (but not in $\sigma'_{t_1^-}$) a
  particle in one of the two neighboring columns is preventing a certain
bead
(below $e$ if the clock is a $p$-clock and above the origin $e$ if
it is a $q$-clock) to reach $e$. In particular, as in case
  (i), at time
  $t_1$ there is an edge $e_1$ in column $\pm1$ within distance
  $\Delta_{max}+1$ from $x_1$, where the bead
  occupation variable is different. 

\end{enumerate}
Call $t_2<t_1$ the first time $s$ at which $\delta_{e_1}(s)\ne
\delta'_{e_1}(s)$. On the event $\Delta_{max}\le C_\z \log R$, we have
that $t_2>0$ because $e_1$ is in the ball $B_R$ where initial
conditions coincide.

We iterate the argument  (cf. Fig. \ref{fig:catene}), and as before we deduce that at $t_2$ there
is an edge $x_2$ in the column of $e_1$, within distance $\Delta_{max}$
from it, where a clock rings and an edge $e_2$ in a column neighboring the one of $e_1$, and
at distance within $\Delta_{max}+1$ from $x_2$, where the
bead occupation variable is different. The iteration stops when $e_n$
is outside the ball $B_R$ of radius $R$. Note that $x_i,x_{i+1}$ are
within vertical distance $2(\Delta_{max}+1)\le 3\Delta_{max}$ and
horizontal distance $1$ from each other.

\begin{figure}[h]
  \includegraphics[width=6cm]{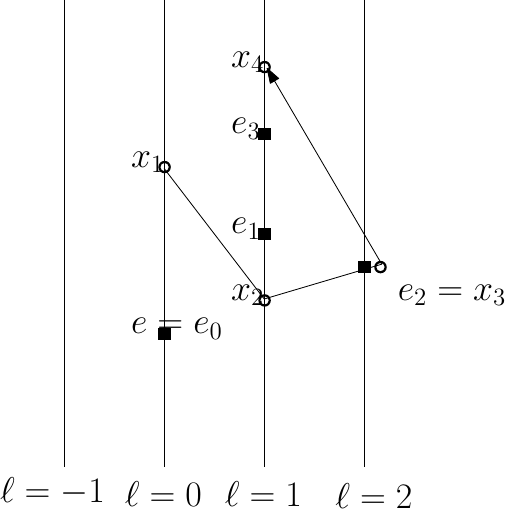}
\caption{An example of the iteration. We have that $|x_i-e_{i-1}|\le
  \Delta_{max}$ and $|e_i-x_{i-1}|\le (\Delta_{max}+1)$. When
  $x_{i+1}=e_{i}$ it means that we are in case (ii) above. The arrow
  follows the chain $x_1,x_2,\dots$}
\label{fig:catene}
\end{figure}

Altogether, if $(\delta_e(t))_{t\in[0,T]}\not\equiv
(\delta'_e(t))_{t\in[0,T]}$ then either $\Delta_{max}\ge C_\z \log R$, or
there exists:
\begin{itemize}
\item a chain of sites $x_1,\dots,x_n$, with $x_i, x_{i+1}$ on
neighboring columns, $x_1$ on the column $\ell=0$ and within distance
$C_\z\log R$ from $e$ (the center of the ball $B_R$), $|x_i-
x_{i+1}|\le 3C_\z\log R$ and
$|x_n|\ge R/2$;
\item a
sequence of times $0\le t_n<t_{n-1}<\dots t_1\le T$ such that either
the $p$-clock or the $q$-clock at $x_i$
rings at time $t_i$.
\end{itemize}
 We get with a union bound
\begin{eqnarray}
  \label{eq:14bis}
  \mathbb P_{\nu}((\delta_e(t))_{t\in[0,T]}\not\equiv (\delta'_e(t))_{t\in[0,T]})\le C_\z/R^\z+
\sum_{n\ge R/(6C_\z\log R)}N_n\,P_n,
\end{eqnarray}
where $P_n$ is the probability that a Poisson
variable of average $T(p+q)$ is at least $n$, while $N_n$ is the number of all
possible distinct chains $x_1,\dots,x_n$  of $n$ sites with the above specified
properties. Of course $N_n\le(C'_\z\log R)^n$ for some constant $C'_\z$ while 
\eqref{eq:37bis} gives
\[
P_n\le e^{-n \log (n/T(p+q)) +n}.
\]
The sum in \eqref{eq:14bis} is $o(R^{-\z})$.
\end{proof}

\begin{remark}
  \label{rem:scambio} Take $\sigma$ sampled from
  $\pi_\rho$ and let $\sigma_t,\sigma'_t$ be the coupled  processes with
  the same Poisson clocks and the same initial condition, except that $\sigma_t$
has cutoff parameter $K=(K_p,K_q)$ and  $\sigma'_t$ has a
  different cutoff parameter $K'=(K'_p,K'_q)$. With the same ideas as
  for Proposition \ref{th:storieidentiche} it is possible to prove
  that
  \begin{gather}
    \label{eq:41}
   \int \pi_\rho(d\sigma)\mathbb P_\sigma((\delta_e(t))_{t\in[0,T]}\not\equiv (\delta'_e(t))_{t\in[0,T]})=\epsilon(K,K')
  \end{gather}
with $\epsilon(K,K')\to0$ when $\min(K_p,K'_p,K_q,K'_q)\to\infty$.
From this one can  deduce that the order how the cutoffs in \eqref{eq:37} are removed is irrelevant.
\end{remark}

\section{Stationarity of Gibbs measures in the infinite graph}
\label{sec:stazionarieta}
We will prove Theorems \ref{th:losanghe} and \ref{th:palm} only for
the honeycomb lattice. As for square lattice, once the result is
proven on the torus (cf. Section \ref{sec:toroquadrato}), the extension to the
infinite system works exactly the same.
\subsection{Proof of Theorem \ref{th:losanghe}}
\label{sec:parte2}
  % We will prove the claim only for $t=1$ but the proof is identical
  % for any other value of $t$.
Let us first of all prove \eqref{eq:32} in the case where
$f=\prod_{i=1}^k\delta_{e_i}$, where
 $e_1,\dots, e_k$ are horizontal edges
($k\in\mathbb N$) and
 $\delta_e$  is the indicator
function that there is a dimer at $e$.
Choose $R$ large enough so that all $e_i$ are in the  ball $B_R$ and
say close to its center. Call $\nu_R$
(resp. $\nu^L_R$) the
marginal of $\pi_\rho$ (resp. $\pi^L_\rho$) on $ B_R$ (or, to be
pedantic, on $\mathcal
H\cap B_R$) and let
$\sigma_{B_R} $ be sampled from $\nu_R$ and $\sigma'_{B_R}$ from $ \nu^L_R$. From convergence
of $\pi^L_\rho$ to $\pi_\rho$ as $L\to\infty$, we can choose $L$
sufficiently large and a
coupling of $(\nu_R,\nu^L_R)$ such that $\sigma_{B_R}=\sigma'_{B_R}$ except with
probability $\epsilon_R$ that tends to zero as $R\to\infty$.

Let $\sigma\in
\Omega_{\mathcal H}$ and $\sigma'\in \Omega_{\mathbb  T_L}$ (with
$\Omega_{\mathbb  T_L} $ the set of dimer
coverings of $\mathbb T_L$) be sampled as follows. The restrictions
$(\sigma_{B_R},\sigma'_{B_R})$
to $B_R$ are sampled from $(\nu_R,\nu^L_R)$. Given the realization of
$(\sigma_{B_R},\sigma'_{B_R})$, the configuration
$(\sigma_{\mathcal H\setminus B_R},\sigma'_{\mathbb T_L\setminus B_R})$ outside
$B_R$ are sampled independently: $\sigma_{\mathcal H\setminus B_R}$ from
$\pi_\rho(\cdot|\sigma_{B_R})$ and $\sigma'_{\mathbb T_L\setminus B_R}$ from
$\pi^L_\rho(\cdot|\sigma'_{B_R})$. We have therefore that $\sigma\sim
\pi_\rho,\sigma'\sim \pi^L_\rho$ and they coincide in $B_R$, except
with probability $\epsilon_R$.

Now couple the processes $(\sigma_t)_{t\ge0},(\sigma'_t) _{t\ge0}$ started from
$\sigma,\sigma'$ by establishing that the Poisson clocks in $B_R$ are
the same for the two, while those outside $B_R$ are independent. Proceeding exactly like in the proof of
Proposition \ref{th:storieidentiche} and using both Lemma
\ref{th:deltamaxToro} and
\ref{th:deltamax} to estimate the probability
that $\Delta(R,T)\ge C_\z\log R$ in any of the two processes, one finds
that, except with probability $\epsilon_R+R^{-\z}=\epsilon'_R$, the
bead occupation variables at all edges $e_i,i\le k$ for the two
processes coincide up to time $T$. Therefore, for every $t\le T$,
\begin{gather}
  \mathbb E_{\pi_\rho}(f(X_t))=\mathbb
  E_{\pi^L_\rho}(f(X_t))+\epsilon'_R=
\pi^L_\rho(f)+\epsilon'_R=\pi_\rho(f)+\epsilon'_R+\epsilon''_L
\end{gather}
where we used Proposition \ref{th:invarianzatoro}
(stationarity on the torus) in the second equality.
Arbitrariness of $T<\infty$ and of $R$ proves \eqref{eq:32} in the
particular case $f=\prod_{i=1}^k\delta_{e_i}$ (the larger $T $ is, the
larger we have to choose $R$ and therefore $L$).
\medskip

When $f$ is any bounded local function depending only on the
configuration of the horizontal dimers, it is always possible to write
$f$ as a finite linear combination of functions of the form
$\prod_{i=1}^k\delta_{e_i} $, so the claim of the theorem holds also in this
case.

\smallskip

Finally, it remains to consider the case where $f$ is a local function
depending also on the configuration of non-horizontal edges. This
requires a slightly different argument.

Let us start with a simple observation, see
Fig. \ref{fig:determinato}.
\begin{figure}[h]
  \includegraphics[width=3cm]{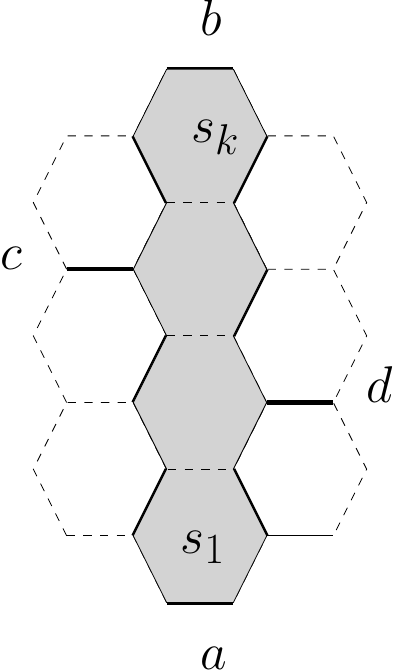}
\caption{Given the horizontal dimers at $a,b,c,d$, all the dimer occupation variables
  at edges of hexagons $s_1,\dots,s_k$ is determined. In fact, all
  north-east oriented edges between $a$ and  $c$ and between  $d$ and
  $b$ are occupied by dimers, and the same is for
  south-east  edges between  $c$ and $b$ or between $a$ and $d$.}
\label{fig:determinato}
\end{figure}
Let $a,b$ be two horizontal edges in the same column $\ell$ and let
$s_1,\dots,s_k$ be the hexagons of $\ell$ included between $a,b$. If we know that the only
beads in $s_1,\dots s_k$ are at
$a,b$ and if we also know the location of the two beads, one in column
$\ell+1$ and one in column $\ell-1$, whose vertical coordinate is between
that of $a$  and of $b$, then we can reconstruct unambiguously the
dimer 
occupation variables of all edges (not just horizontal ones) of hexagons $s_1,\dots,s_k$.

Call $S$ a finite collection of hexagons such that the union of
their edges contains the support of $f$.
Let $\Lambda_K$ be the collection of hexagons that are at graph
distance (on $\mathcal H^*$) at most $K$ from $S$ ($S$ itself
is a subset of $\Lambda_K$). Let $E_K$ be the
event that, for every $s\in S$, there are two beads in $\Lambda_K$, one
below $s$ and one above it. 
From 
the discussion above 
we know that, on the event $ E_K$, the dimer configuration on all the
hexagons in 
$S$ is uniquely identified by
$\eta|_{\Lambda_K}$, the bead configuration in $K$.
Let 
\begin{eqnarray}
  \label{eq:17}
  g(\eta):=g(\eta|_{\Lambda_K})=f(\sigma)1_{E_{\Lambda_K}}
\end{eqnarray}
which depends only on $\eta|_{\Lambda_K}$. 
Let also $\tilde E_K(t)$ the event that
$E_K$ is realized at every $s\le t$. 
We have 
\begin{gather}
  \label{eq:34bis}
  \mathbb E _{\pi_{{\rho}}}(f(X_t))= \mathbb E _{\pi_{{\rho
        }}}(g(\eta_t);1_{\tilde E_K(t)})
  +
O( \|f\|_\infty\mathbb P_{\pi_{\rho}}(\tilde E_{K}(t)^c) )\\=
\mathbb E _{\pi_{{\rho
        }}}(g(\eta_t))  +
O( \|f\|_\infty\mathbb P_{\pi_{\rho}}(\tilde E_{K}(t)^c) ).
\end{gather}
From Corollary 
\ref{th:comodo} we deduce easily that $\epsilon_{K,t}:=\mathbb P_{\pi_{\rho
    }}(\tilde E_{K}(t)^c)$ tends to zero as $K\to\infty$, for every
fixed $t$.
Therefore, 
\begin{gather}
  \label{eq:58}
  \mathbb E _{\pi_{{\rho}}}(f(X_t))= \mathbb E_{{\pi_{\rho}}}(g(\eta_t))+O(\|f\|_\infty\epsilon_{K,t})\\={\pi_{\rho}}(g)+O(\|f\|_\infty\epsilon_{K,t})=\pi_{\rho}(f)+O(\|f\|_\infty\epsilon_{K,t})
\end{gather}
where we used invariance of the Gibbs measure for functions of the
bead configuration in the second equality
and \[\pi_{{\rho}}(g)=\pi_{{\rho}}(f)+
O(\|f\|_\infty\epsilon_{K,0})\] in the last (note $\epsilon_{K,0}\le \epsilon_{K,t}$).
We conclude by letting 
$K\to\infty$.

\subsection{ Proof of Proposition
\ref{th:palm}}
\label{sec:palm}
This is very  similar to the proof of Theorem \ref{th:losanghe}, so we will
be 
very sketchy.  Given any $R>0$ and $\epsilon>0$ one can choose  $L$ sufficiently large
so that there is a probability law for the random couple
$(\sigma,\sigma')\in\Omega_{\mathcal H}\times \Omega_{\mathbb
  T_L}$ such that $\sigma\sim \hat
\pi_\rho,\sigma'\sim\hat\pi^L_\rho$ and $\sigma,\sigma'$ coincide,
except with probability $\epsilon$, in the ball $B_R$. This is done
like at the beginning of the proof of  Theorem \ref{th:losanghe}: in
fact, the total variation distance between the marginals on $B_R$ of
$\hat\pi_\rho,\hat\pi^L_\rho$ tends to zero as $L\to\infty$
(this is because the statement is true for the measures
$\pi_\rho,\pi^L_\rho$ not conditioned to have a bead at the edge
$e_0$, and the probability to have a bead at $e_0$ is
uniformly bounded away from zero). As in Theorem \ref{th:losanghe},
given any $a>0$,
the coupled bead processes $(\sigma_t,\sigma'_t)$ that use the same clocks
in $B_R$ coincide up to time $T$ in the ball $B_a$, except with
probability $\epsilon$, provided $R$ is larger than some
$R_0(a,T)$. On the other hand, by comparing the displacement of a bead
with that of a DHD particle, we see that if $a$ is sufficiently large
(depending only on $T$)
the ``tagged bead'' stays within distance $a/2$ from its
initial position up to time $T$, except
with probability $\epsilon$. In conclusion, the processes
$\hat X_t,\hat X_t'$  re-centered at the position of the tagged bead
of $\sigma_t,\sigma'_t$ coincide (except with probability $2\epsilon$) up
to time $T$ in a ball of radius $a/2$ centered at the origin. Together
with the fact that the re-centered process $\hat X_t'$ has law $\hat
\pi^L_\rho $ at all times (Proposition \ref{th:palmtoro}) this implies
the claim.

\section{Speed and fluctuations}
\label{sec:velfluct}

Here we prove Theorem \ref{th:vvar} and \ref{th:32} about 
average speed and fluctuations of the growth process.

Let $\Lambda $ be the $\ell\times \ell$ box in $\mathcal H$
defined as 
the collection of hexagons obtained by translating a fixed hexagonal
face $x$ (say, the one at the origin of $\mathcal H$) by $a\vec
e_1+b \vec e_2, 0\le a,b\le \ell$. 
Let 
\begin{gather}
Q_\Lambda(t)=\sum_{x\in \Lambda}(h_x(t)-h_x(0))=\sum_{x\in \Lambda}Q_x(t).  
\end{gather}
Remark that
\begin{gather}\label{eq:intuitiva}
  Q_\Lambda(t+\delta)= Q_\Lambda(t)-\sum_e y^{(p)}_e |V(e,\uparrow)\cap
  \Lambda| +\sum_e y^{(q)}_e|V(e,\downarrow)\cap
  \Lambda| +R_{\Lambda,t,\delta}
\end{gather}
with $y^{(p)}_e/ y^{(q)}_e$ the indicator that the $p/q$-clock
 at $e$ rings once in the time interval $[t,t+\delta]$, while the ``error
 term'' $R_{\Lambda,t,\delta}$ includes the contribution to the change of $Q_\Lambda$
 from the events where there are $n\ge 2$ edges $e_1,\dots,e_n$ where
clocks ring in the time interval $[t,t+\delta]$ and where, for every
$i\le n$, 
either $|V(e_i,\downarrow)\cap
  \Lambda|\ne 0$ or $|V(e_i,\uparrow)\cap
  \Lambda|\ne 0$.

\begin{proof}[Proof of \eqref{eq:Qx}]
We want to see that
\begin{gather}
\label{eq:correntemedia}
  \mathbb E_{\pi_\rho}[Q_\Lambda(t)]=(q-p)t \ell^2 J
\end{gather}
with $J$ defined in \eqref{eq:J}. By linearity we can assume $\ell=1$,
i.e. $\Lambda=\{x\}$.

To see that $R_{\{x\},t,\delta}$ can be neglected for $\delta\to0$ let $b^\pm(t)$ be the lowest/highest bead above/below
$x$ in the same column and let $I(t)$ be the collection of horizontal
edges included between $b^-(t)$ and $b^+(t)$.  Let also
$I(t,\delta)=\cup_{s\in[t,t+\delta]}I(s)$.
Then, observe that the only clock rings that can contribute $\pm 1$ to
$Q_x(t+\delta)- Q_x(t)$ necessarily occur in
$I(t,\delta)$.
Then, 
\begin{eqnarray}
  |R_{\{x\},t,\delta}|\le N\, 1_{\{N\ge 2\}}%=O(\delta^2 |I(t,\delta)|^3).
\end{eqnarray}
where $N$ is a Poisson variable of average $\delta(p+q)  |I(t,\delta)|$.
 Note that the law of
$|I(t,\delta)|$ for the stationary process of
law $\mathbb P_{\pi_\rho}$  is
independent of $t$ and that, % if $|I(0,\delta)|\ge n$
% then $\Delta(1,\le \delta)\ge n/2$
from Corollary \ref{th:comodo}, the random variable
$|I(0,\delta)|$ has exponential tails. Therefore, $\mathbb
E_{\pi_\rho}|R_{\{x\},t,\delta}|=O(\delta^2)$ and we see 
that
\begin{eqnarray}
  \frac d{dt}\mathbb E_{\pi_\rho}[Q_x(t)]=-p\,\pi_\rho(|\{e:x\in V(e,\uparrow)\}|)+\,q\,
\pi_\rho(|\{e: x\in V(e,\downarrow)\}|)\\=
 (q-p) \pi_\rho(|\{e:x\in V(e,\uparrow)\}|)
\end{eqnarray}
where we used 
 stationarity of $\pi_\rho$ and in the last step its invariance  by
reflections through any hexagon.
\end{proof}

\begin{proof}[Proof of \eqref{eq:varianzaQ} and \eqref{eq:varianzaQ1}]
We compute the variance of $Q_\Lambda(t)$. % Again, we can work on
% the torus and let $L=L_k\to\infty$ afterwards, with the help of Remark
% \ref{rem:accopp}.
% For lightness of
% notation, let us write formulas directly for $L=\infty$.
We have (recall \eqref{eq:intuitiva}, where again we can see that
$R_{\Lambda,t,\delta}\approx \delta^2$
for $\delta$ small with the same argument as above), letting for lightness of
notation $\langle\cdot\rangle:=\mathbb E_{\pi_\rho}$,
\begin{gather}
\label{eq:Q21}
  \frac{d}{dt}\mathbb \langle Q_\Lambda(t)^2\rangle=
2 \langle Q_\Lambda(t)K_1(\sigma_t)\rangle+\langle K_2(\sigma_t)\rangle
% \sum_{j=1}^2 \left(
%   \begin{array}[l]{l}
%     n\\
% j
%   \end{array}
% \right)
% \langle(Q_\Lambda(t))^{n-j}K_j(\sigma_t)\rangle
% 2 \mathbb
%   E_{\pi_\rho}[Q_\Lambda(t)K_1(\sigma_t)]+\mathbb
%   E_{\pi_\rho}[K_2(\sigma_t)]\\=
% 2 \mathbb
%   E_{\pi_\rho}[Q_\Lambda(t)K_1(\sigma_t)]+\pi_\rho(K_2)
\end{gather}
where
\begin{gather}
  K_n(\sigma)=(-1)^np\sum_e|V(e,\uparrow)\cap \Lambda|^n+q\sum_e|V(e,\downarrow)\cap \Lambda|^n
\end{gather}
% \begin{gather}
%   K(\sigma)=p\sum_e|V(e,\uparrow)\cap \Lambda|^2+q\sum_e|V(e,\downarrow)\cap \Lambda|^2
% \end{gather}
and the sums run over all horizontal edges of $\mathcal H$.
We have  then, recalling also \eqref{eq:correntemedia}, $\langle
K_2(\sigma_t)
\rangle=\pi_\rho(
K_2)
$ and $\partial_t \langle Q_\Lambda(t)\rangle=\pi_\rho(K_1)$,
\begin{gather}
  \label{eq:Mn}
  \frac d{dt}M_2(t):=\frac d{dt}\langle(Q_\Lambda(t)-\langle
  Q_\Lambda(t)\rangle)^2\rangle=
2\langle
(Q_\Lambda(t)-\langle
  Q_\Lambda(t)\rangle)(K_1(\sigma_t)-\pi_\rho(K_1))
\rangle\\+
\pi_\rho(
K_2
).
\end{gather}

One has (see Appendix \ref{app:stime}) that %,  for any integer $r$,
\begin{gather}
\label{tec1}
\sup_\ell \ell^{-2}\pi_\rho(|K_n|)\le C_1=C_1(n)<\infty
\end{gather}
and, for every $\delta>0$,
\begin{gather}
 \label{agaa}
\sup_\ell\frac1{\ell^{2+\delta}}\pi_\rho[(K_1
    -\pi_\rho(K_1))^{2}]\le C_2(\delta)<\infty.
\end{gather}
\begin{remark}
\label{rem:sesolo}
  It is likely that the variance of $K_1$ is
  actually of order $\ell^{2}$, without any spurious
  correction. Indeed it is proven in \cite{Boutillier} that, if $f$ is a local
  dimer function and $f_x$ is $f$ translated by $x\in \mathbb Z^2$,
  then $(1/\ell)\sum_{|x|\le \ell}[f_x-\pi_\rho(f_x)]$ satisfies a CLT with
  finite variance. The problem with $K_1$ is that
  $|V(e,\uparrow)\cap \Lambda|,|V(e,\downarrow) \cap \Lambda|$ are not local functions. While
  in principle they are ``almost-local'' (the probability that they
  involve more than $n$ dimers decays at least exponentially in $n$,
  see Lemma \ref{th:DeltaH}), 
  even proving the weaker \eqref{agaa} requires some non-trivial work.
% If one could prove even the weaker bound $O(\ell^2\log\ell)$ on the variance of $K_1$,
% it would follow from the argument we give below that the variance of
% $h_x(t)-h_x(0)$ is $O(\log t)$, which we believe to be the optimal estimate.
\end{remark}

We have from \eqref{eq:Mn}, from  stationarity and from \eqref{tec1}, \eqref{agaa}
\begin{gather}
\label{eqdiff1}
  \frac{d}{dt}M_2(t)\le
  \sqrt{M_2(t)}\sqrt{\pi_\rho[(K_1(\sigma_t)-\pi_\rho(K_1))^2]}+2\ell^2C_1\\
\le C_3(\delta)\ell^{1+\delta/2}\sqrt{M_2(t)}+2\ell^2C_1
\end{gather}
from which it is then immediate to deduce that
\begin{gather}
\label{eq:M2t}
  M_2(t=\ell)\le C_4\ell^{4+\delta}.
\end{gather}
Now we are ready to prove \eqref{eq:varianzaQ}. Let $x_0$ be a face in
$\Lambda$.
Write % (letting for lightness of notation $\langle\cdot\rangle:=\mathbb E_{\pi_\rho}(\cdot)$)
\begin{gather}
\label{rep1}
  \mathbb P_{\pi_\rho}(|Q_{x_0}(\ell)-\langle Q_{x_0}(\ell)\rangle|\ge
  % u\sqrt{\log \ell}
\ell^{2\delta}
  )\\\le 
 \mathbb P_{\pi_\rho}(|Q_{x_0}(\ell)-\langle Q_{x_0}(\ell)\rangle|\ge % u\sqrt{\log \ell}
 \ell^{2\delta};
|Q_\Lambda(\ell)-  \langle Q_\Lambda(\ell)\rangle|\le \ell^{2+\delta}
% (u/2)\ell^2\sqrt{\log \ell}
)+
o(1)
%O(1/u^2).
\end{gather}
where we used \eqref{eq:M2t} to neglect the event that $|Q_\Lambda(\ell)-  \langle Q_\Lambda(\ell)\rangle|\ge \ell^{2+\delta}$.
On the other hand
\begin{gather}
  Q_\Lambda(\ell)-  \langle Q_\Lambda(\ell)\rangle=-A_1+A_2+A_3\\:=-\sum_{x\in\Lambda}[h_x(0)-h_{x_0}(0)-\pi_\rho(h_x-h_{x_0})]\\
+
\sum_{x\in \Lambda}[h_x(\ell)-h_{x_0}(\ell)-\pi_\rho(h_x-h_{x_0})]+\ell^2[Q_{x_0}(\ell)-\langle Q_{x_0}(\ell)\rangle].
\end{gather}
We have (see again Appendix \ref{app:stime})
\begin{gather}
 \label{tec3}
  \pi_\rho\left[A_1^2\right]=O(\ell^4\log \ell)
\end{gather}
so that, using stationarity and Tchebyshev,
\begin{gather}
  \label{eventicchio} |A_1|,|A_2|\le 
\ell^{2}\log \ell
% (u/10)\ell^2\sqrt{\log \ell}
  ,
\end{gather}
with probability $1-o(1)$.
Finally, we note that if event \eqref{eventicchio} holds and 
at the same time
$|Q_\Lambda(\ell)-  \langle
Q_\Lambda(\ell)\rangle|\le
\ell^{2+\delta}
 % (u/2)\ell^2\sqrt{\log \ell}
 $, one cannot
have $|Q_{x_0}(\ell)-\langle Q_{x_0}(\ell)\rangle|\ge%  u\sqrt{\log
  % \ell}
\ell^{2\delta}
$. Eq. \eqref{eq:varianzaQ} is then proven (just let $\ell:=t$).

\begin{remark}
\label{rem:ciro2}
  If for a given slope $\rho$ the condition \eqref{eq:ciro} is satisfied, then one can 
  prove (cf. Remark \ref{rem:ciro} below) that
  $\pi_\rho[(K_1-\pi_\rho(K_1))^2]=O(\ell^2\log \ell)$, to be compared
  with \eqref{agaa}.
Going back to \eqref{eqdiff1} one sees that \eqref{eq:M2t} is then
improved to $M_2(t=\ell)\le C_4 \ell^4\log \ell$. Repeating the
argument that starts with \eqref{rep1}, one sees immediately that
\eqref{eq:varianzaQ} is improved into \eqref{eq:varianzaQ1}.
\end{remark}
\end{proof}

\appendix
\numberwithin{equation}{section}
\section{Some equilibrium estimates}
Here we give upper bounds on the probability that, at equilibrium,
there is a large gap between two consecutive beads in the same
column. We use this information to deduce several useful
equilibrium estimates.

% \subsection{Estimates on the infinite graph}
\label{app:stime}

Let $J_r$ be a set of $r$ \emph{adjacent} horizontal edges in the
same vertical column of $\mathcal H$ and $N_r$ the number of beads in $J_r$.
\begin{lemma}
\label{th:DeltaH} Let $\rho$ be a non-extremal slope. For every
$\lambda>0$ and $u>0$ there exists $C=C(\lambda,u,\rho)<\infty$ such
that, 
   % There exists $\epsilon=\epsilon({\rho})>0$ such that, 
   for every
   $r\in \mathbb N$,
\begin{eqnarray}
  \label{eq:39}
    \pi_{\rho}(|N_r-\rho_3 r|\ge u r)\le C\exp(-\lambda u r).
% \text{there are at most $\epsilon r$ beads in } J_r)\le
% (1/\epsilon) \,e^{-\epsilon r}.
\end{eqnarray}
\end{lemma}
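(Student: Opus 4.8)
The plan is to extract from the determinantal representation \eqref{eq:multidimero} the fact that, along a fixed column, the beads form the one–dimensional discrete sine process of density $\rho_3$, and then to convert the resulting exact Laplace transform, together with a $\log r$ bound on $\mathrm{Var}(N_r)$, into the super–exponential estimate \eqref{eq:39}. First I would label by $m\in\mathbb Z$ the consecutive horizontal edges $e_m=\rb_m\rw_m$ of the column. Since $K(\rb_m,\rw_m)=k_3$, formula \eqref{eq:multidimero} shows that the correlation functions of the occupation variables $(\delta_{e_m})_{m}$ are the determinants of $\mathsf S(m,n):=k_3\,K^{-1}(\rw_m,\rb_n)$, and moving one step along a column is a fixed lattice translation, so by \eqref{eq:K-1} one has $K^{-1}(\rw_m,\rb_n)=\int_{\mathbb T} g(z,w)^{n-m}\,P(z,w)^{-1}\,\mu(dz\,dw)$ for a monomial $g$ and the Haar probability $\mu$. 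Carrying out one of the two torus integrals by residues — the same computation that produces the $1/|x|$ decay \eqref{eq:64}, and that uses the two simple zeros of $P$ on $\mathbb T$ — identifies $\mathsf S$, up to a gauge factor $e^{i\psi(m-n)}$ immaterial for determinants, with the discrete sine kernel $\mathsf S_{\rho_3}(m,n)=\sin(\pi\rho_3(m-n))/(\pi(m-n))$, $\mathsf S_{\rho_3}(m,m)=\rho_3$. The only features I use are that $\mathsf S$ is conjugate to the kernel of an orthogonal projection, so $\mathsf S^2=\mathsf S$, $0\le\mathsf S\le 1$ and $\sum_{k\in\mathbb Z}\mathsf S(0,k)^2=\mathsf S(0,0)=\rho_3$, and that $|\mathsf S(m,n)|\le C(\rho)/|m-n|$ for $m\neq n$, which is just \eqref{eq:64} along the column.

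Next I would use that $N_r$ is the number of points of this determinantal process in an interval $A$ of $r$ consecutive positions, so for every $s\in\mathbb R$
\[
\mathbb E_{\pi_\rho}\!\bigl[e^{sN_r}\bigr]=\det\!\bigl(I+(e^{s}-1)\,\mathsf S|_A\bigr)=\prod_{k=1}^{r}\bigl(1+(e^{s}-1)\lambda_k\bigr),
\]
where $\lambda_1,\dots,\lambda_r\in[0,1]$ are the eigenvalues of the principal submatrix $\mathsf S|_A$ of the projection $\mathsf S$. Then $\sum_k\lambda_k=\mathrm{Tr}\,\mathsf S|_A=\rho_3 r$, and, using $\mathsf S^2=\mathsf S$ together with the decay bound,
\[
\sum_{k=1}^{r}\lambda_k(1-\lambda_k)=\mathrm{Tr}\bigl(\mathsf S|_A(I-\mathsf S|_A)\bigr)=\sum_{m\in A}\sum_{n\notin A}\mathsf S(m,n)^2\le C(\rho)^{2}\sum_{m\in A}\frac{c}{d(m)}\le C'(\rho)\log(r+2),
\]
where $d(m)\ge 1$ is the distance from $m\in A$ to the endpoints of $A$ (so $\sum_{m\in A}1/d(m)=O(\log r)$).

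Finally, for fixed $\lambda,u>0$ — we may assume $0<u<1$, otherwise the event in \eqref{eq:39} is empty since $N_r\in[0,r]$ and $\rho_3\in(0,1)$ — I would run a Chernoff bound on $X:=N_r-\rho_3 r$. For every $s\in\mathbb R$, concavity of $\lambda\mapsto\log(1+(e^{s}-1)\lambda)-s\lambda$ and its vanishing at $\lambda=0,1$ give a finite constant $c(s):=\sup_{0<\lambda<1}\bigl(\log(1+(e^{s}-1)\lambda)-s\lambda\bigr)/\bigl(\lambda(1-\lambda)\bigr)$, whence
\[
\log\mathbb E_{\pi_\rho}\!\bigl[e^{sX}\bigr]=\sum_{k=1}^{r}\bigl(\log(1+(e^{s}-1)\lambda_k)-s\lambda_k\bigr)\le c(s)\sum_{k=1}^{r}\lambda_k(1-\lambda_k)\le c(s)\,C'(\rho)\log(r+2).
\]
Applying Markov's inequality to $\pm X$ with $s=\pm 2\lambda/u$ bounds each of the tails $\pi_\rho(X\ge ur)$ and $\pi_\rho(X\le -ur)$ by $\exp\bigl(-2\lambda r+c(\pm 2\lambda/u)\,C'(\rho)\log(r+2)\bigr)$, which for all $r\ge r_0(\lambda,u,\rho)$ is at most $e^{-\lambda r}\le e^{-\lambda u r}$; summing the two tails and enlarging the constant to absorb the finitely many $r<r_0$ (where $\pi_\rho(\cdot)\le 1$) yields \eqref{eq:39}.

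The hard part is the $\log r$ variance bound: the decay \eqref{eq:64} alone only gives $\mathrm{Var}(N_r)=O(r)$, which yields a polynomial bound via Chebyshev and, via the Chernoff step above, an exponential bound with a fixed rate — not the decay $e^{-\lambda u r}$ with $\lambda$ arbitrarily large demanded by \eqref{eq:39}. What is genuinely needed is the exact structure of $K^{-1}$ restricted to a column, namely the Parseval identity $\sum_{k\in\mathbb Z}\mathsf S(0,k)^2=\mathsf S(0,0)$ — equivalently the discrete–sine–kernel identification of the first step. This super–exponential behaviour is characteristic of the projection (``rigidity'') structure of the sine process and cannot be obtained from soft decay-of-correlations input alone; everything else in the argument is routine.
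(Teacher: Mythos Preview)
Your proof is correct and follows essentially the same route as the paper's. The paper simply cites from \cite{Klecturenotes} that $N_r$ has the law of a sum of independent Bernoulli variables with parameters $q_i$ satisfying $\sum_i q_i=\rho_3 r$ and $\sum_i q_i(1-q_i)\sim(1/\pi^2)\log r$, and then runs the identical Chernoff step; you unpack this citation by identifying the column process with the discrete sine process (so that the $q_i$ are the eigenvalues $\lambda_k$ of $\mathsf S|_A$) and deriving the $\log r$ bound from the projection identity $\mathsf S^2=\mathsf S$ together with the $1/|m-n|$ decay.
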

Recall that $\pi_\rho(N_r)=\rho_3r$.
\begin{proof}
It is known (cf. \cite[Sec. 6.3]{Klecturenotes}) that $N_r$ is
distributed like the sum of $r$ independent but not identically
distributed Bernoulli random variables $B_i,i\le r$ of parameter $q_i$
satisfying
$\sum_i q_i=r\rho_3$ and $\sum_i q_i(1-q_i)\sim (1/\pi^2)\log r$
as $r\to\infty$.
One has then
\begin{gather}
  \pi_\rho(N_r-\rho_3 r\ge u r)=P(\sum_{i\le r}(B_i-q_i)\ge u r)\\\le 
\exp(-\lambda u r)\prod_i
\left[q_ie^{\lambda(1-q_i)}+(1-q_i)e^{-\lambda q_i}\right].
\end{gather}
Since for every $\lambda>0$ there exists $C_1=C_1(\lambda)$ such that
$\exp(x)\le 1+x+C_1x^2$ for every $x\in [-\lambda,\lambda]$, we get
for $r\ge r_0(\rho)$ large
\begin{gather}
    \pi_\rho(N_r-\rho_3 r\ge u r)\le e^{-\lambda u r}\prod_i[1+C_2(\lambda)
    q_i(1-q_i)]\\\le e^{-\lambda u r+C_2(\lambda)\sum_i q_i(1-q_i)}\le e^{-\lambda
      u r+C_4(\lambda)\log r}\le C(\lambda,u) e^{-\lambda u r/2}.
\end{gather}
The claim is immediately extended to $r\le r_0(\rho)$, possibly
changing $C$ to a new constant $C(\lambda,u,\rho)$.
With a similar argument one estimates $ \pi_\rho(N_r-\rho_3 r\le -u r)$.
\end{proof}

\begin{proof}[Proof of \eqref{tec1}]
  Just note that 
  $|V(e,\uparrow)\cap \Lambda|=k$ implies that there are $k$ hexagons just below
  $e$ with no beads (or $k+d$ of them, if $e$ is at distance $d$ from
  $\Lambda$), an event that has probability exponentially small in
  $k$ (or $k+d$) thanks to Lemma \ref{th:DeltaH}. The average of $K_n$ is then immediately seen to be of
  order $\ell^{2}$. 
\end{proof}
\begin{proof}[Proof of \eqref{tec3}]
  It is well known \cite{KOS} that
the variance of $h_x-h_y$ under $\pi_\rho$ grows like the logarithm of
$|x-y|$. Then, a Cauchy-Schwarz inequality implies the desired estimate.
\end{proof}
\begin{proof}[Proof of \eqref{agaa}] % For lightness of notations, we
  % consider $n=1$ but the argument is the same for $n>1$.
By Jensen's inequality and symmetry it suffices to show that the
variance of
  \[f_\Lambda=\sum_e|V(e,\uparrow)\cap\Lambda|\] is $O(\ell^{2+\delta})$.
Write
\begin{gather}
  f_\Lambda=f^{(1)}_\Lambda+f^{(2)}_\Lambda -f^{(3)}_\Lambda
  :=\sum_{e\in\Lambda}|V(e,\uparrow)|
  +\sum_{e\not\in\Lambda}|V(e,\uparrow)\cap\Lambda| -\sum_{e\in\Lambda}|V(e,\uparrow)\setminus\Lambda|
\end{gather}
and, again by Jensen, it is enough to estimate the variance of each of the three
terms. This is easy for $f^{(2)}_\Lambda$  and
 $f^{(3)}_\Lambda$. Indeed, for instance if $e$ is outside $\Lambda$
and at distance $d_e$ from it,  then
$|V(e,\uparrow)\cap\Lambda|\ge n$ implies that there is a sequence of
at least $n+d_e$ adjacent hexagons starting from $e$, where no bead is
present. This has probability exponentially small in $d_e+n$. As a
consequence, if $e_1,e_2\not\in\Lambda$ then 
\begin{gather}
   %\pi_\rho[|V(e_i,\uparrow)\cap\Lambda|]\le c\exp(-c'd_{e_i})\\
\pi_\rho[|V(e_1,\uparrow)\cap\Lambda|\times|V(e_2,\uparrow)\cap\Lambda|]\le
c \exp(-c'(d_{e_1}+ d_{e_2}))
\end{gather}
from which a bound $O(\ell^{2})$ on the second moment (and therefore
the variance) of $f^{(2)}_\Lambda$ easily follows.  A similar argument
works for $f^{(3)}_\Lambda$.

The case of $f^{(1)}_\Lambda$ is much more subtle. Observe  (cf. Fig.  \ref{fig:determinato2}) that having $|V(e,\uparrow)|=n>0$ is
equivalent to the 
following: the horizontal edge $e^-_n$ that is at distance $n$ below $e$
is occupied by a dimer, and so are the $n$ edges $e^l_1,\dots,e^l_n$
and $e^r_1,\dots,e^r_n$, i.e.
\begin{gather}
  \label{eq:ie}
1_{|V(e,\uparrow)|=n}=\delta_{e^-_n}\prod_{j=1}^n(\delta_{e^l_j}\delta_{e^r_j}).
\end{gather}
\begin{figure}[h]
  \includegraphics[width=2.3cm]{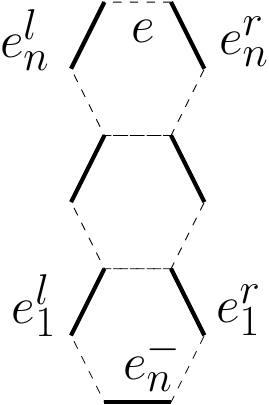}
\caption{The event $V(e,\uparrow)=n$ for $n=3$.}
\label{fig:determinato2}
\end{figure}

We have
\begin{gather}
  f_\Lambda^{(1)}-\pi_\rho(  f_\Lambda^{(1)})=\sum_{n>0}n(f_n-\pi_\rho(f_n))
\end{gather}
with 
$
f_{n}=\sum_{e\in\Lambda} 1_{|V(e,\uparrow)|=n}.
$
We use then Jensen's inequality, $(\sum_i t_i a_i)^2\le \sum_i
t_i a_i^2$ if $t_i\ge0,\sum_it_i=1$, to get
\begin{gather}
\label{eq:usatoJensen}
 \pi_\rho  [(f_\Lambda^{(1)}-\pi_\rho(  f_\Lambda^{(1)}))^2] \le C\sum_{n>0}n^4 \pi_\rho  [(f_{n}-\pi_\rho(  f_{n}))^2] 
\end{gather}
(we chose $t_{n}=1/(C n^2)$ with $C=\sum_{n>0}n^{-2}$).
It remains to estimate the variance of  $f_{n}$. Letting
$V(e;n)=1_{|V(e,\uparrow)|=n}-\pi_\rho(|V(e,\uparrow)|=n)$, write
\begin{gather}
\label{eq:riveu}
  \pi_\rho  [(f_{n}-\pi_\rho(  f_{n}))^2] \le
  \sum_{e,e'\in\Lambda}
|\pi_\rho[ V(e;n)V(e';n)]|.
\end{gather}
Since the event $|V(e,\uparrow)|=n$ implies that there are $n-1$
adjacent hexagons
without beads under $e$, we have from  Lemma \ref{th:DeltaH}, for any $\lambda>0,\delta>0$,
\[
|\pi_\rho[ V(e;n)V(e';n)]|\le C(\lambda,\delta,\rho)e^{-n\lambda/\delta}.
\]
Together with \eqref{eq:riveu} this gives
\begin{gather}
\label{eq:sceglilambda}
   \pi_\rho  [(f_{n}-\pi_\rho(  f_{n}))^2] \le C'(\lambda,\delta,\rho)
  \sum_{e,e'\in\Lambda}|\pi_\rho[
  V(e;n)V(e';n)]|^{1-\delta}e^{-\lambda n},
\end{gather}
where the constant $\lambda$ will be chosen later.

Recall from \eqref{eq:ie} that $1_{|V(e,\uparrow)|=n}$  is a product of dimer indicator functions
on a certain set of $p=2n+1$ (not all horizontal)
edges of $\mathcal H$. Call
$e_1=(\rb_1,\rw_1),\dots,e_p=(\rb_p,\rw_p)$ such edges and
let $e_{p+1},\dots,e_{2p}$ be the analogous edges corresponding to
$1_{|V(e',\uparrow)|=n}$
(of course $e_{i+p}$ is just $e_i$ translated by $e'-e$).
Now we use formula \eqref{eq:multidimero}:
\begin{gather}
\label{eq:VV}
  \pi_\rho[
  V(e;n)V(e';n)]=(k_3)^2(k_1 k_2)^{2n}\widetilde\det
  (K^{-1}(\rw_i,\rb_j))_{1\le
    i,j\le 2p}
\end{gather}
where $\widetilde\det$ means that, since the variables $V(e;n)$ are
centered, 
when we expand the determinant in
permutations $\sigma$  of $\{1,\dots,2p\}$ we have to keep only the permutations such that in the
product there are
$N\ge 2$ ``special'' terms of the type $K^{-1}(\rw_i,\rb_{\sigma(i)})$
with $i\le p$ and $\sigma(i)>p$ or viceversa (note
$N$ is always even). 
Thanks to \eqref{eq:64}, each of the $N$ special terms 
is of order $1/|e-e'|$ for $|e-e'|$ large. We will consider
therefore only the contribution of permutations such that $N=2$ (those
with $N>2$ will give a sub-dominant correction when the sum over
$e,e'\in\Lambda$ is performed; we skip details). W.l.o.g.
we assume that the  special terms are $K^{-1}(\rw_{i_1},\rb_{\sigma(i_1)})$ and
$K^{-1}(\rw_{i_2},\rb_{\sigma(i_2)})$ with $i_1,\sigma(i_2)\le
p, i_2,\sigma(i_1)>p$ (one has afterwards to sum over the $O(p^4)=O(n^4)$
possible choices of $i_1,i_2,\sigma(i_1),\sigma(i_2)$).

The contribution to $\widetilde\det
  (K^{-1}(\rw_i,\rb_j))_{1\le
    i,j\le 2p}$ from such permutations is
  \begin{gather}
\epsilon_{i_1,i_2,\sigma(i_1),\sigma(i_2)}\det
(K^{-1}(\rw_i,\rb_j))_{\{1\le i,j\le p, i\ne i_1, j\ne \sigma(i_2)\}}
\\\times\det
(K^{-1}(\rw_i,\rb_j))_{\{p+1\le i,j\le 2p, i\ne i_2 , j\ne \sigma(i_1)\}}
  \end{gather}
with $\epsilon=\pm 1$ a sign that will play no role later.
We claim that there exists $C(\rho)<\infty$ such that 
\begin{gather}
  \label{eq:GH}
  \det(K^{-1}(\rw_i,\rb_j))_{i\in I,j\in J}\le C(\rho)^r
\end{gather}
for any $r\ge 1$ and sets $I,J$ of cardinality $r$.
If this is the case, from \eqref{eq:VV} we have
\begin{gather}
\label{eq:numerica}
| \pi_\rho[
  V(e;n)V(e';n)]|\le 
  \frac{c(\rho)}{|e-e'|^2}[k_1\,k_2\,C(\rho)^2]^{2n}
n^4\le  \frac{(C')^n}{|e-e'|^2}
\end{gather}
(recall that $n^4$ comes from the summation over the possible values
of $i_1,i_2,\sigma(i_1),\sigma(i_2)$).
Plugging into
\eqref{eq:sceglilambda} and choosing $\lambda$ sufficiently large we get
\begin{gather}
   \pi_\rho  [(f_{n}-\pi_\rho(  f_{n}))^2] \le
   C''(\lambda,\delta)e^{-(n/2)\lambda}\sum_{e,e'\in\Lambda}\frac1{|e-e'|^{2(1-\delta)}}\le 
   C'''(\lambda,\delta)e^{-(n/2)\lambda}\ell^{2+2\delta}.
\end{gather}
Using this estimate in \eqref{eq:usatoJensen} we finally get
\begin{gather}
   \pi_\rho  [(f_\Lambda^{(1)}-\pi_\rho(  f_\Lambda^{(1)}))^2]\le C''(\delta)\ell^{2+2\delta}
\end{gather}
as desired. The contribution from permutations with $N>2$ gives
instead $O(\ell^2)$ since $|e-e'|^{-2}$ is replaced by $|e-e'|^{-N}$
that is summable over $e'\in \mathcal H$. 

It remains to prove \eqref{eq:GH}. This is based on Gram-Hadamard type
bounds (cf. for instance \cite[App. A4]{GM}):
if $f_i,g_i,i\le m$ are vectors in a Hilbert space and $\|\cdot\|$ is
the norm induced by the scalar product $\langle\cdot,\cdot\rangle$, then
\begin{gather}
\label{eq:GH2}
 |\det(\langle f_i,g_j\rangle)_{i,j\le m}|\le \prod_{j\le m}\|f_j\|\,\|g_j\|.
\end{gather}
The second observation (this trick is often used in
constructive Quantum Field Theory, see again \cite[App. A4]{GM}) is
that one can rewrite \eqref{eq:K-1} as 
\begin{gather}
  K^{-1}(\rw_x,\rb_{x'})=\frac1{(2\pi)^2}\int_0^{2\pi}d\theta\int_0^{2\pi}d\phi
  \frac{e^{-i\theta(x'_2-x_2)+i\phi(x'_1-x_1)}}{\tilde
    P(\theta,\phi)}\\=\sum_{y\in\mathbb Z^2}\overline{
    A_x(y)}B_{x'}(y)=:\langle A_x,B_{x'}\rangle
\end{gather}
where $\tilde P(\theta,\phi)=P(e^{i\theta},e^{i\phi})$, $\overline z$ is the complex conjugate of a complex number $z$ and 
\begin{gather}
  A_x(y)=\frac1{(2\pi)^2}\int_0^{2\pi}d\theta\int_0^{2\pi}d\phi\frac{e^{-i\theta(x_2-y_2)+i\phi(x_1-y_1)}}{
\sqrt{|\tilde P(\theta,\phi)|}}
\\
  B_{x'}(y)=\frac1{(2\pi)^2}\int_0^{2\pi}d\theta\int_0^{2\pi}d\phi\frac{e^{-i\theta(x'_2-y_2)+i\phi(x'_1-y_1)}}{
|\tilde P(\theta,\phi)|^{3/2}}
\overline{\tilde P(\theta,\phi)}.
\end{gather}
Finally one applies \eqref{eq:GH} together with the observation that
$\|A_x(\cdot)\|,\|B_x(\cdot)\|$ are upper bounded by a 
constant. Indeed, 
\begin{gather}
\label{eq:speranum}
  \|A_x(\cdot)\|^2=\|B_x(\cdot)\|^2=\frac1{(2\pi)^2}\int_0^{2\pi}d\theta\int_0^{2\pi}d\phi\frac1{|\tilde P(\theta,\phi)|}=:C(\rho)
\end{gather}
which is finite since $\tilde P$ has only simple poles on the torus. 
\end{proof}
\begin{remark}
\label{rem:ciro}
  For a given choice of $\rho$ (and therefore of $k_1,k_2,k_3$)
it may happen that $C(\rho)$ in \eqref{eq:speranum} satisfies
$k_1 k_2 C(\rho)^2<1$. In this case, from the first inequality in 
\eqref{eq:numerica} together with  \eqref{eq:riveu} and
\eqref{eq:usatoJensen} we see that
\begin{gather}
   \pi_\rho  [(f_\Lambda^{(1)}-\pi_\rho(  f_\Lambda^{(1)}))^2]
   =O\left(
\sum_{e,e'\in\Lambda}\frac1{|e-e'|^2}\right)=O(\ell^2\log \ell).
\end{gather}
\end{remark}

\section*{Acknowledgments}

I am very grateful to Alexei Borodin (who, by the way, suggested
this problem), Christophe Garban,  Beno\^it Laslier and Herbert Spohn for
many enlightening comments and to Giada Basile, Lorenzo Bertini and
Stefano Olla for  discussions on the  ``gradient condition''. % I would also like to thank Pablo
% Ferrari and Timo Sepp\"al\"ainen for very helpful suggestions on the literature of the
% Hammersley process.

\end{document}